\documentclass[leqno]{amsart}
\usepackage[utf8]{inputenc}
\usepackage[english]{babel}
\usepackage{newclude}
\usepackage{amsfonts,amssymb,amsmath,amsgen, amsthm}
\usepackage{color, xcolor}
\usepackage{enumerate}
\usepackage{centernot}
\usepackage{comment}
\usepackage{hyperref}
\usepackage{soul}

\newtheorem{theorem}{Theorem}[section]
\newtheorem{lem}[theorem]{Lemma}
\newtheorem{prop}[theorem]{Proposition}

\newtheorem{remark}[theorem]{Remark}
\theoremstyle{definition}
\newtheorem{defin}[theorem]{Definition}
\newtheorem{conjecture}[theorem]{Conjecture}

\def\R{\mathbb R}
\def\C{\mathbb C}
\def\N{\mathbb N}
\def\d{\partial}

\newcommand{\ii}{\mathrm{i}\mkern1mu}
\def\eps{\varepsilon}

\title[Dissipative Rotational GPE]{Existence and Large Time Behavior for a Dissipative Variant of the Rotational NLS Equation}
\author{Paolo Antonelli and Boris Shakarov}
\date{\today}

\begin{document}
\maketitle

\begin{abstract}
    We study a dissipative variant of the Gross-Pitaevskii equation with  rotation. The model contains a nonlocal, nonlinear term that forces the conservation of $L^2$-norm of solutions. We are motivated by several physical experiments and numerical simulations studying the formation of vortices in Bose-Einstein condensates. 
	We show local and global well-posedness of this model and investigate the asymptotic behavior of its solutions. In the linear case, the solution asymptotically tends to the eigenspace associated with the smallest eigenvalue in the decomposition of the initial datum. In the nonlinear case, we obtain weak convergence  to a stationary state. Moreover, for initial energies in a specific range, we prove strong asymptotic stability of ground state solutions.
\end{abstract}

\section{Introduction}

\subsection{Physical background and motivations}

Since their first experimental realization \cite{cornell2002nobel, ketterle2002nobel}, Bose-Einstein condensates (BECs) have become the object of intensive research in the scientific community. In particular, the observation and measurement of its superfluid properties, such as quantized vortices \cite{matthews1999vortices, MaChWoDa00, AbRaVoKe01} for instance, stimulated a considerable amount of interest, leading to investigations of their dynamics \cite{barenghi2001quantized}.
\newline
From the theoretical point of view, one of the main features of BECs is that they are accurately described by the Gross-Pitaevskii (GP) equation \cite{gross1963hydrodynamics, Pi61}, see \cite{pitaevskii2016bose} for a comprehensive physical introduction on the model, its derivation and main properties. The GP description allows the prediction and simulation of relevant phenomena observed in experiments \cite{dalfovo1999theory}. In a rotating frame, the (scaled) GP equation may be written as
\begin{equation}\label{eq:gp1}
	\ii \partial_t \psi = -\frac{1}{2}\Delta \psi + V \psi + g|\psi|^{2} \psi - \Omega \cdot L\psi
\end{equation}
with $\psi:\R^{1+d}\to\C$ being the order parameter describing the condensate.
In equation \eqref{eq:gp1}, $g\in\R$ denotes the (scaled) Gross-Pitaevskii constant,
$\Omega\in\R^d$ determines the rotation axis, $L$ is the angular momentum operator defined by
\begin{equation}\label{eq:ang_mom}
	L := -\ii x \wedge \nabla,
\end{equation}
and $V$ is a (possibly anisotropic) harmonic confining potential \cite{Fe08}, given by
\begin{equation}\label{eq:V}
	V(x) =  \frac{1}{2}\sum_{j=1}^d \omega_j^2 x_j^2,
 \quad\omega_j>0.
\end{equation}
Throughout this paper, we consider $d=2, 3$, for  physical motivations. The case $d=2$ describes the effective dynamics of a BEC under the influence of a strongly anisotropic (cigar-shaped) potential, see \cite{ben2005nonlinear} where a similar dimensional reduction was rigorously proved, \cite{bao2003numerical} for some related numerical experiments.
In this case, the rotation term becomes 
\begin{equation*}
\Omega\cdot L=-\ii|\Omega|(x_1\d_2-x_2\d_1).
\end{equation*}
Equation \eqref{eq:gp1} possesses a Hamiltonian given by the total energy 
\begin{equation}\label{eq:en1}
	E[\psi(t)] = \int \frac{1}{2}|\nabla \psi(t, x)|^2 + V(x) |\psi(t, x)|^2 + \frac{g}{2} |\psi(t, x)|^{4}  -(\bar{\psi} (\Omega \cdot L) \psi)(t, x) \, dx.
\end{equation}
Although the GP theory works remarkably well for a broad range of applications, in some specific regimes thermal and quantum fluctuations cannot be ignored. 
In order to accurately describe some specific phenomena (e.g. collective damped oscillations or finite temperature effects), dissipative terms must be taken into account. 
Several models have been proposed to extend the well-established GP description, see \cite{PrJa08} for a general overview. 
For systems close to thermal equilibrium, the dynamics may be described by 
\begin{equation}\label{eq:gp2_intro}
	(\ii - \gamma) \partial_t \psi = -\frac{1}{2}\Delta \psi +V\psi + g|\psi|^{2} \psi - \Omega \cdot L\psi - \mu \psi.
\end{equation}
In this extended model, the damping coefficient $\gamma>0$ is a phenomenological constant that is introduced to fit experimental data, see \cite{BuChMo98} and the references therein.
Moreover, equation \eqref{eq:gp2_intro} is also used to numerically investigate particular features of atomic BECs, such as the formation of vortex lattices \cite{KaTsUe02} or the stabilization of dark solitons \cite{proukakis2004parametric}.
This aspect is actually related to a class of numerical methods \cite{ChSuTo00, Du02, BaDu04} developed to compute ground state solutions to GP-type equations by means of a normalized gradient flow, see \cite{BaoCai} for a general overview and also \cite{AnCaSh22} for the rigorous analysis of the related parabolic model.
\newline
Physical motivations demand this extended model to conserve the total number of particles, that for equation \eqref{eq:gp2_intro} amounts to require the conservation of the $L^2$-norm.
This is achieved by setting the chemical potential $\mu$ to depend on the solution itself as follows
\begin{equation}\label{eq:mu1}
	\mu[\psi(t)] = \frac{1}{\|\psi(t)\|_{L^2}^2} \left( \int  \frac{1}{2} | \nabla \psi(t) |^2	+  V|\psi(t)|^2 + g | \psi(t)|^{4} -(\bar{\psi}(\Omega \cdot L) \psi)(t) dx  \right).
\end{equation}
This choice is also consistent with the numerical methods developed in \cite{ChSuTo00, Du02, BaDu04}, see also \cite{AnCaSh22}.
\newline
Model \eqref{eq:gp2_intro}, with $\mu$ defined by \eqref{eq:mu1}, is also exploited to simulate turbulent phenomena in BECs in the presence of a small damping, see for instance \cite{kobayashi2005kolmogorov}.
\newline
From the mathematical point of view, the chemical potential $\mu[\psi]$ may also be interpreted as a Lagrange multiplier that constrains the solution to live on the manifold
\begin{equation}\label{eq:man}
    \mathcal M=\{u\in L^2(\R^d) \,:\,\|u\|_{L^2}=\|\psi_0\|_{L^2}\}.
\end{equation}
The purpose of our work is to develop a rigorous analysis of the following model
\begin{equation}\label{eq:gp2}
	(\ii - \gamma) \partial_t \psi = -\frac{1}{2}\Delta \psi +V\psi + g|\psi|^{2\sigma} \psi - \Omega \cdot L\psi - \mu \psi,
\end{equation}
with $\mu$ defined in \eqref{eq:mu1} and general power-type nonlinearities satisfying $0<\sigma<\frac{2}{(d-2)^+}$. 
\newline
As it will be outlined in the next subsection, our work focuses on the Cauchy problem, providing local and global well-posedness, depending on the choice of parameters appearing in \eqref{eq:gp2}. Moreover, we investigate the asymptotic behavior of global solutions for large times, showing the convergence towards stationary solutions to \eqref{eq:gp2} 
(and consequently solitary waves to \eqref{eq:gp1} as well). In this respect, our analysis provides a rigorous background for the numerical experiments developed in \cite{KaTsUe02}.
\newline
Let us remark that a similar model was already studied in \cite{ChMaSp15}, where \eqref{eq:gp2} is considered by setting the chemical potential to be constant, i.e. $\mu\in\R$. In this case the total mass is no longer conserved and in fact it may also asymptotically vanish in some cases, see Proposition 4.5. in \cite{ChMaSp15}. In general, the non-conservation of the $L^2$-norm leads to a completely different asymptotic behavior, compare for instance Corollary 6.7. in \cite{ChMaSp15} with Proposition \ref{prp:omega} in the present paper.

\subsection{Mathematical Setting and Main Results}

We are interested in the following Cauchy problem,
\begin{equation}\label{eq:gpe}
	\begin{cases}
         &(\ii - \gamma)\partial_t \psi = -\frac{1}{2}\Delta \psi + V \psi + g|\psi|^{2\sigma} \psi - \Omega \cdot L\psi - \mu[\psi] \psi, \\
        &\psi(0) = \psi_0 \in \Sigma(\R^d),
    \end{cases}
\end{equation}
where $\psi:\R^{1 + d} \to \C$ is complex-valued, 
$d=2, 3$, $g\in\R$ indicates the strength of the nonlinearity and can either be \emph{repulsive} $g>0$ or \emph{attractive} $g<0$, the power $\sigma$ satisfies 
$0< \sigma<\frac{2}{(d-2)^+}$, $\gamma > 0$,  $V$ is defined in \eqref{eq:V} and the angular momentum operator is given in \eqref{eq:ang_mom}. 
The nonlocal, nonlinear term $\mu[\psi]$, which may be interpreted as the chemical potential associated with the state $\psi$, is defined by
\begin{equation}\label{eq:mu_gp}
	\mu[\psi] = \frac{1}{\|\psi\|_{L^2}^2} \left( \int  \frac{1}{2} | \nabla \psi |^2	+  V|\psi|^2 + g | \psi|^{2\sigma + 2} -\bar{\psi}(\Omega \cdot L) \psi dx  \right)
\end{equation}
and ensures the conservation of total mass, namely all solutions to \eqref{eq:gpe} formally satisfy
\begin{equation*}
    \| \psi(t)\|_{L^2} = \|\psi_0\|_{L^2}.
\end{equation*}
We denoted by $\Sigma(\R^d)$ the natural energy space associated with the Cauchy problem \eqref{eq:gpe}, that is defined by
\begin{equation}\label{eq:Sigma1}
	\Sigma(\R^d) = \left\{u \in H^1(\R^d)\, : \, \int |x|^2 |u|^2 dx < \infty  \right\}.
\end{equation} 
Indeed, the total energy associated with \eqref{eq:gpe} is given by
\begin{equation} \label{eq:energy_gpe}
	E[\psi(t)] = \int \frac{1}{2}|\nabla \psi(t)|^2 + V |\psi(t)|^2 + \frac{g}{\sigma + 1} |\psi(t)|^{2\sigma + 2}  -(\bar{\psi} (\Omega \cdot L) \psi)(t)  dx.
\end{equation}
We remark that the rotation term is bounded for all finite energy states, as one has
\begin{equation*}
\left|\int  \bar{u} \Omega\cdot L u \,  dx\right| \lesssim 
\||\cdot| u\|_{L^2}\|\nabla u\|_{L^2}\lesssim
\| u\|_{\Sigma}^2.
\end{equation*}
While the total mass is conserved along the flow of \eqref{eq:gpe}, the total energy is non-increasing and formally satisfies the following identity
\begin{equation}\label{eq:en_decrease}
	E[\psi(t)] + 2\gamma \int_0^t \int |\partial_t \psi(s)|^2 dx ds = E[\psi_0].
\end{equation}
The two main goals of this work are to provide (local and global) well-posedness results for the Cauchy problem \eqref{eq:gpe} and to study the large-time behavior of its global solutions.
\newline
The Hamiltonian equation \eqref{eq:gp1} was already thoroughly studied, see \cite{HaHsLi07, HaHsLi08, AnMaSp10} for local and global well-posedness and the analysis of blow-up solutions.
In \cite{ChMaSp15} model \eqref{eq:gp2} with $\mu\in\R$ was considered. For this dissipative dynamics, it is possible to exploit the parabolic regularization effect, entailed by the linear semi-group. In this way, in \cite{ChMaSp15} the authors prove global well-posedness under some assumptions on the nonlinearity and provide a characterization of the global attractor.
\newline
On the other hand, model \eqref{eq:gpe} has the peculiarity that the chemical potential is determined by a functional of the solution itself, involving its gradient. This actually entails further mathematical difficulties, especially when studying the local well-posedness problem. 
Indeed, the classical contraction argument as developed in \cite{Ka87}, for instance, cannot be straightforwardly adapted to \eqref{eq:gpe}, due to a delicate interplay between the power-type nonlinearity and the nonlocal term. 
In particular, the fact that the nonlinear operator $\psi\mapsto|\psi|^{2\sigma}\psi$ is not locally Lipschitz from $\Sigma$ to itself when $0<\sigma<\frac12$, prevents us to use the standard fixed point argument in this case. We refer to Section \ref{sec:localGP} for a more detailed discussion on this issue.

\begin{theorem} \label{thm:lwp2}
Let  $\frac{1}{2} \leq \sigma < \frac{2}{(d-2)^+}$, or let $g=0$. Then for any  $ \psi_0 \in \Sigma(\R^d)$ there exist a maximal existence time $T_{max}>0$ and a unique local solution $\psi \in C([0,T_{max}), \Sigma(\R^d))$ to equation \eqref{eq:gpe}. Moreover, either $T_{max} = \infty$, or $T_{max} < \infty$ and we have
\begin{equation*}
    \lim_{t \to T_{max}} \| \psi(t)\|_{\Sigma} = \infty.
\end{equation*}
\end{theorem}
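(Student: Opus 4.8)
The plan is to recast \eqref{eq:gpe} as an abstract semilinear evolution equation driven by an analytic semigroup and to solve it by a contraction mapping, exploiting the dissipative (parabolic) nature of the flow. Dividing by $\ii-\gamma$ and writing $H:=-\tfrac12\Delta+V-\Omega\cdot L$, I set the linear generator $\mathcal A:=\tfrac{1}{\ii-\gamma}H$ and rewrite the equation as $\partial_t\psi=\mathcal A\psi+G(\psi)$, with $G(\psi):=\tfrac{1}{\ii-\gamma}\bigl(g|\psi|^{2\sigma}\psi-\mu[\psi]\psi\bigr)$. Since $H$ is self-adjoint and bounded below with form domain $\Sigma$, and $\mathrm{Re}\,\tfrac{1}{\ii-\gamma}=-\tfrac{\gamma}{1+\gamma^2}<0$, the operator $-\mathcal A$ is sectorial; hence $\mathcal A$ generates an analytic semigroup $(e^{t\mathcal A})_{t\ge0}$ that is bounded on $L^2$ and on $\Sigma$ and enjoys the parabolic smoothing estimates $\|e^{t\mathcal A}f\|_{\Sigma}\lesssim t^{-\alpha}\|f\|_{L^{q}}$ for $t\in(0,1]$, with $q$ and $\alpha<1$ dictated by the Sobolev scale. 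The solution is then built as a fixed point of the Duhamel map
\[
\Phi(\psi)(t)=e^{t\mathcal A}\psi_0+\int_0^t e^{(t-s)\mathcal A}G(\psi(s))\,ds
\]
in a ball $X_{T,M}=\{\psi\in C([0,T],\Sigma):\sup_t\|\psi(t)\|_\Sigma\le M,\ \sup_t\|\psi(t)-\psi_0\|_\Sigma\le\delta\}$, the last constraint guaranteeing $\|\psi(t)\|_{L^2}\ge\tfrac12\|\psi_0\|_{L^2}$ so that $\mu[\psi]$ is well-defined along the iteration.

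Next I would collect the nonlinear estimates. The nonlocal term is the easier one: since $\mu[\psi]\in\R$ is a sum of quadratic and $L^{2\sigma+2}$-type functionals, each controlled by $\|\psi\|_\Sigma$, polarization shows that $\psi\mapsto\mu[\psi]$ is locally Lipschitz from $\Sigma$ into $\R$ on balls bounded away from $0$ in $L^2$, whence $\psi\mapsto\mu[\psi]\psi$ is locally Lipschitz from $\Sigma$ to $\Sigma$. For the power nonlinearity I would use the pointwise bound $\bigl||z_1|^{2\sigma}z_1-|z_2|^{2\sigma}z_2\bigr|\lesssim(|z_1|^{2\sigma}+|z_2|^{2\sigma})|z_1-z_2|$ together with Hölder and the embedding $\Sigma\hookrightarrow H^1\hookrightarrow L^{2\sigma+2}$ (valid precisely for $\sigma<\tfrac{2}{(d-2)^+}$) to bound $|\psi|^{2\sigma}\psi$ and its differences in $L^q$; at the gradient level one uses $\nabla(|\psi|^{2\sigma}\psi)\sim|\psi|^{2\sigma}\nabla\psi$, whose difference estimate rests on the continuity of $z\mapsto|z|^{2\sigma}$, and this provides a (weighted) Lipschitz bound only when $2\sigma\ge1$. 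This is exactly where the hypothesis $\sigma\ge\tfrac12$ enters; when $g=0$ the term is absent and no lower bound on $\sigma$ is required.

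With these estimates in hand, $\Phi$ preserves $X_{T,M}$ and is a contraction for $T=T(M,\|\psi_0\|_\Sigma)$ small, because each nonlinear contribution is multiplied by $\int_0^t(t-s)^{-\alpha}\,ds\lesssim T^{1-\alpha}$, which is small thanks to $\alpha<1$, itself guaranteed by $\sigma<\tfrac{2}{(d-2)^+}$. The Banach fixed point theorem yields a unique $\psi\in C([0,T],\Sigma)$; uniqueness on the whole interval follows from a Gronwall argument applied to the difference of two solutions in the same norm. Finally, defining $T_{max}$ as the supremum of existence times and using that the length of the local interval depends only on $\|\psi(t_0)\|_\Sigma$, a standard continuation argument gives the blow-up alternative: if $T_{max}<\infty$ and $\|\psi(t)\|_\Sigma$ stayed bounded as $t\to T_{max}$, the solution could be restarted near $T_{max}$ and extended, a contradiction.

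I expect the main obstacle to be the simultaneous treatment of the two nonlinear terms within a single self-consistent scheme. The nonlocal potential $\mu[\psi]$ is quadratic in the $\Sigma$-norm, of the same order as the linear part, so its contribution to the contraction must be absorbed by the smallness of $T$ rather than by smoothing, and one must keep the iterates bounded below in $L^2$ throughout so that $\mu[\psi]$ stays Lipschitz. At the same time the power nonlinearity forces the use of the parabolic smoothing and, at the level of gradients, of the inequality for $z\mapsto|z|^{2\sigma}z$ that is available only for $\sigma\ge\tfrac12$. Reconciling these two mechanisms, the $\Sigma\to\Sigma$ bound for the nonlocal term and the $L^q\to\Sigma$ smoothing for the power term, in one fixed-point space is the delicate interplay alluded to in the discussion preceding the statement.
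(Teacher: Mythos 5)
Your overall architecture --- Duhamel map, contraction in a ball that additionally enforces a lower $L^2$-bound so that $\mu[\psi]$ stays well defined and locally Lipschitz, the hypothesis $\sigma\ge\tfrac12$ entering through the gradient-level difference estimate for $z\mapsto|z|^{2\sigma}z$, and a continuation argument giving the blow-up alternative --- is the same as the paper's. The mechanism for closing the estimates, however, is genuinely different. The paper works in ``dispersive mode'': the fixed point lives in the mixed norms $L^{\infty}_t\Sigma\cap L^{q}_tW^{1,r}$ together with the weighted norm $\|x\,\cdot\|_{L^q_tL^r}$, the inhomogeneous terms are controlled by the Strichartz-type estimates of Proposition \ref{prp:strich}, the operators $\nabla$ and $x$ are moved through the semigroup via the commutator identities of Proposition \ref{prp:comm}, and smallness comes from H\"older in time, i.e. the factor $T^{(q-q')/(qq')}$. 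You work in ``parabolic mode'': a single norm $C([0,T],\Sigma)$, with smallness produced by the integrable singularity $T^{1-\alpha}$ of the analytic smoothing. Your functional setting is simpler, and your way of preserving the $L^2$-lower bound (through $\|\psi(t)-\psi_0\|_\Sigma\le\delta$) is cheaper than the paper's, which goes through the semigroup estimates \eqref{eq:GPlinL2}--\eqref{eq:GPlinL3}.

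Three points need repair before this is a proof of the statement as written. (i) Sectoriality of $-\mathcal A$ requires $H_\Omega$ to be semi-bounded, which holds only for $|\Omega|\le\omega$: for an isotropic trap the joint eigenfunctions of $H_0$ and $\Omega\cdot L$ give eigenvalues $\lambda_{0,k}+m\Omega$, $|m|\le k-1$ (cf. Proposition \ref{prp:eigenv}), which are unbounded below when $|\Omega|>\omega$, and then $e^{t\mathcal A}$ is not even bounded on $L^2$; moreover the identification of $\Sigma$ with the form domain of $H_\Omega$ with equivalent norms (Remark \ref{rem:norms}) requires $|\Omega|<\omega$. The theorem imposes no restriction on $\Omega$, and the paper's proof instead runs on the short-time kernel estimates \eqref{eq:disp1}--\eqref{eq:disp2} imported from \cite{ChMaSp15}; your proof silently adds a hypothesis. (ii) The smoothing bound $\|e^{t\mathcal A}f\|_\Sigma\lesssim t^{-\alpha}\|f\|_{L^q}$ with $q<2$ does not follow from abstract sectoriality, which only yields smoothing along the $L^2$-based scale of domains of powers of $H_\Omega$ (e.g. $L^2\to\Sigma$ at rate $t^{-1/2}$); for $d=3$ and $1<\sigma<2$ the nonlinearity does not map $H^1$ into $L^2$, so you genuinely need $L^q\to L^2$ mapping properties of $e^{t\mathcal A}$, $\nabla e^{t\mathcal A}$ and $xe^{t\mathcal A}$ --- i.e. exactly \eqref{eq:disp1}--\eqref{eq:disp2} with $p=2$. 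Thus your argument rests on the same imported lemma as the paper's, not on soft operator theory. (iii) There is an internal tension in your treatment of the gradient: if you keep $\nabla$ and $x$ on the semigroup (which is what $\|e^{t\mathcal A}f\|_\Sigma\lesssim t^{-\alpha}\|f\|_{L^q}$ means), then the difference estimate for the power nonlinearity needs only the pointwise bound, $\sigma\ge\tfrac12$ is never used, and your scheme would in fact cover every $0<\sigma<\frac{2}{(d-2)^+}$ with $|\Omega|<\omega$; if instead you differentiate the nonlinearity, as your text states (this is where inequality \eqref{eq:omega2GP} and hence $\sigma\ge\tfrac12$ enter), then $\nabla$ and $x$ must be commuted through $e^{(t-s)\mathcal A}$, producing the terms $[\nabla,H_\Omega]=\nabla V+\ii\nabla\wedge\Omega$ and $[x,H_\Omega]=\nabla-\ii\Omega\wedge x$ that the paper tracks via Proposition \ref{prp:comm} and that your outline omits. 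Commit to one of the two mechanisms and carry out its bookkeeping; either can be completed.
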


Let us notice that this result covers the physically relevant case $\sigma=1$. 
On the other hand, for  the mathematical sake of providing a  more complete picture, we are also going to prove a well-posedness result in the range $0<\sigma<\frac12$.
In order to do this, we develop an alternative argument that constructs a sequence of approximating solutions $\{ \psi^{(k)} \}$. More precisely, we set up an iterative scheme where the chemical potential is given by the previous step, $\mu^{(k)}=\mu[\psi^{(k-1)}]$. In this way, the existence of the approximating solutions at each step is provided by the classical contraction principle.
\newline
Second, we prove that the approximating sequence satisfies uniform bounds which allow us to show its compactness and convergence towards a solution of \eqref{eq:gpe}. 
The drawback is that, in order to infer these uniform bounds, we require the rotation speed to satisfy
$|\Omega| < \frac{\omega}{\sqrt{2}}$,  where $\omega$ is defined by
\begin{equation}\label{eq:min_trap}
\omega=\min_j\omega_j>0.
\end{equation}
Under this assumption, we are able to prove the following theorem.

\begin{theorem} \label{thm:lwp1}
Let  $0 \leq \sigma < \frac{2}{(d-2)^+}$ and 
$|\Omega| < \frac{\omega}{\sqrt{2}}$. 
Then for any  $ \psi_0 \in \Sigma(\R^d)$ there exist a maximal existence time $T_{max}>0$ and a unique local solution $\psi \in C([0,T_{max}), \Sigma(\R^d))$ to equation \eqref{eq:gpe}. Moreover, either $T_{max} = \infty$, or $T_{max} < \infty$ and we have
\begin{equation*}
    \lim_{t \to T_{max}} \| \psi(t)\|_{\Sigma} = \infty.
\end{equation*}
\end{theorem}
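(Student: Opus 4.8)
The plan is to realize the iterative scheme sketched above and then pass to the limit. Writing $H:=-\frac{1}{2}\Delta+V-\Omega\cdot L$ for the linear part, I set $\psi^{(0)}\equiv\psi_0$ and, given $\psi^{(k-1)}\in C([0,T],\Sigma(\R^d))$, I freeze the chemical potential by setting $\mu^{(k)}(t):=\mu[\psi^{(k-1)}(t)]$, a known real-valued function that is bounded on $[0,T]$, and solve
\begin{equation*}
(\ii-\gamma)\partial_t\psi^{(k)}=H\psi^{(k)}+g|\psi^{(k)}|^{2\sigma}\psi^{(k)}-\mu^{(k)}(t)\,\psi^{(k)},\qquad\psi^{(k)}(0)=\psi_0.
\end{equation*}
The gain of freezing is that $-\mu^{(k)}(t)\psi^{(k)}$ is now \emph{linear} in the unknown, so the only genuine nonlinearity is the local power term and the equation is of the dissipative type treated in \cite{ChMaSp15}, whose linear flow $e^{\frac{t}{\ii-\gamma}H}$ is analytic and regularizing because $\mathrm{Re}\frac{1}{\ii-\gamma}=-\frac{\gamma}{1+\gamma^2}<0$. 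This parabolic smoothing lets one measure $|\psi|^{2\sigma}\psi$ in $L^2$ rather than in $\Sigma$, and since $\big||u|^{2\sigma}u-|v|^{2\sigma}v\big|\lesssim(|u|^{2\sigma}+|v|^{2\sigma})|u-v|$ holds for \emph{every} $\sigma>0$, the classical contraction principle à la \cite{Ka87} produces a unique $\psi^{(k)}\in C([0,T],\Sigma(\R^d))$, covering in particular the range $0<\sigma<\frac{1}{2}$ excluded from Theorem \ref{thm:lwp2}.

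The core of the argument is a bound for $\psi^{(k)}$ in $C([0,T],\Sigma)$ that is \emph{uniform in $k$} on a common interval. Pairing the frozen equation with $\overline{\partial_t\psi^{(k)}}$ and with $\overline{\psi^{(k)}}$ and taking real parts yields, as in \eqref{eq:en_decrease}, a dissipation identity controlling the energy \eqref{eq:energy_gpe} of $\psi^{(k)}$ and $\gamma\int_0^t\|\partial_t\psi^{(k)}\|_{L^2}^2\,ds$ up to a remainder measuring the mismatch $\mu[\psi^{(k)}]-\mu^{(k)}$, which vanishes at a fixed point and must be controlled along the scheme. To turn energy control into a $\Sigma$-bound I use coercivity of the quadratic part of the energy: with $V\ge\frac{\omega^2}{2}|x|^2$ and the sharp estimate $\big|\int\bar\psi(\Omega\cdot L)\psi\,dx\big|\le|\Omega|\,\||x|\psi\|_{L^2}\|\nabla\psi\|_{L^2}$, a Young inequality with a free parameter $\eps>0$ gives
\begin{equation*}
\frac{1}{2}\|\nabla\psi\|_{L^2}^2+\int V|\psi|^2\,dx-\int\bar\psi(\Omega\cdot L)\psi\,dx\ge\Big(\frac{1}{2}-\frac{|\Omega|\eps}{2}\Big)\|\nabla\psi\|_{L^2}^2+\Big(\frac{\omega^2}{2}-\frac{|\Omega|}{2\eps}\Big)\||x|\psi\|_{L^2}^2.
\end{equation*}
Choosing $\eps$ so that both coefficients stay positive while a \emph{fixed fraction} of the kinetic energy is retained is possible exactly when $\frac{|\Omega|}{\omega^2}<\frac{1}{2|\Omega|}$, i.e. when $|\Omega|<\frac{\omega}{\sqrt2}$, which explains the hypothesis. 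The retained kinetic term is what allows the (possibly attractive, $g<0$) nonlinearity $\frac{g}{\sigma+1}\int|\psi|^{2\sigma+2}$ to be absorbed through Gagliardo–Nirenberg in the subcritical regime, and a Gronwall argument then closes the $k$-uniform bound on a short time interval.

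With the uniform $\Sigma$-bound and the bound on $\partial_t\psi^{(k)}$ in $L^2([0,T]\times\R^d)$ from the dissipation identity, I invoke the compact embedding $\Sigma(\R^d)\hookrightarrow\hookrightarrow L^2(\R^d)$ — valid since the harmonic oscillator has compact resolvent — and an Aubin–Lions–Simon argument to extract a subsequence with $\psi^{(k)}\to\psi$ strongly in $C([0,T],L^2)$ and weakly-$\ast$ in $L^\infty([0,T],\Sigma)$. Interpolating gives strong convergence in every subcritical $L^p$, which is enough to pass to the limit both in $|\psi^{(k)}|^{2\sigma}\psi^{(k)}$ and, crucially, in the frozen potential $\mu^{(k)}=\mu[\psi^{(k-1)}]\to\mu[\psi]$; the limit therefore solves \eqref{eq:gpe} with the \emph{self-consistent} chemical potential \eqref{eq:mu_gp}, and one upgrades $\psi\in C([0,T],\Sigma)$ by a weak-to-strong continuity argument. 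Uniqueness follows by estimating the difference of two solutions in $L^2$, using the pointwise bound on the power nonlinearity together with the Lipschitz dependence of $\mu[\cdot]$ on its argument; the blow-up alternative $\lim_{t\to T_{max}}\|\psi(t)\|_\Sigma=\infty$ is obtained by reopening the construction from any time at which the $\Sigma$-norm remains finite.

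The hard part is the uniform-in-$k$ estimate of the second step. Because the iterates solve the frozen problem, neither the mass nor the energy is exactly conserved along the scheme, and the feedback through $\mu^{(k)}=\mu[\psi^{(k-1)}]$ — which sees the full $\Sigma$-norm of the previous iterate via the term $\int|\nabla\psi^{(k-1)}|^2$ in \eqref{eq:mu_gp} — must be prevented from destabilizing the bound before the common existence time. Keeping this mismatch under control while retaining coercivity is exactly where $|\Omega|<\frac{\omega}{\sqrt2}$ is indispensable; this is the price paid for covering $0\le\sigma<\frac{1}{2}$ through the iteration, whereas in Theorem \ref{thm:lwp2} the nonlocal term is Lipschitz on $\Sigma$, no freezing is needed, and no smallness on $\Omega$ is required.
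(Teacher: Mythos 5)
Your overall skeleton (freeze $\mu^{(k)}=\mu[\psi^{(k-1)}]$, solve the frozen problem by contraction, derive $k$-uniform bounds, extract a limit by compactness) is the same as the paper's, but the two places where you invoke the hypothesis $|\Omega|<\frac{\omega}{\sqrt2}$ are not where it is actually needed, and this leaves a genuine gap at the limit passage. First, the coercivity of the quadratic part of the energy that you compute --- absorbing $|\Omega|\,\||x|\psi\|_{L^2}\|\nabla\psi\|_{L^2}$ by Young's inequality with a parameter $\eps$ --- requires only $\eps<1/|\Omega|$ and $\eps>|\Omega|/\omega^2$, which are compatible precisely when $|\Omega|<\omega$, not $|\Omega|<\frac{\omega}{\sqrt2}$; this first-order coercivity is the paper's estimate \eqref{eq:sigmaEn} and holds under the weaker condition, so your derivation of the threshold is an algebra slip and, more importantly, does not identify the real obstruction. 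Second, and more seriously: strong convergence of $\psi^{(k)}$ in $C([0,T],L^2)$ and in subcritical $L^p$ is \emph{not} enough to pass to the limit in the frozen potential, because $\mu[u]$ contains $\int\frac12|\nabla u|^2+V|u|^2-\bar u(\Omega\cdot L)u\,dx$; under weak-$\ast$ convergence in $L^\infty_t\Sigma$ these quadratic terms are only lower semicontinuous, they do not converge. You yourself observe that $\mu$ ``sees the full $\Sigma$-norm'' of the iterate, but your Aubin--Lions step, based on $\Sigma\hookrightarrow\hookrightarrow L^2$, does not deliver strong convergence in $\Sigma$, so the identification $\mu^{(k)}\to\mu[\psi]$ is unjustified as written.

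The paper closes exactly this gap with a \emph{second-order} coercivity estimate, \eqref{eq:HSigma}: for $|\Omega|<\frac{\omega}{\sqrt2}$ one has $\|\psi\|_{\Sigma^2}^2\lesssim (H_\Omega\psi,H_\Omega\psi)+\|\psi\|_\Sigma^2$, the restriction being needed to absorb $\|\Omega\cdot L\psi\|_{L^2}^2\le|\Omega|^2\,\big\||x||\nabla\psi|\big\|_{L^2}^2$ into the term $\frac{\omega^2}{2}\big\||x||\nabla\psi|\big\|_{L^2}^2$. Combined with the $L^2_tL^2_x$ bound on $\partial_t\psi^{(k)}$ coming from the dissipation identity, this yields a $k$-uniform bound in $L^2([0,\mathcal T],\Sigma^2)$, and the compact embedding $\Sigma^2\hookrightarrow\hookrightarrow\Sigma$ then gives strong convergence of a subsequence in $L^2([0,\mathcal T],\Sigma)$, which is what legitimizes $\mu[\psi^{(k)}]\to\mu[\psi]$. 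The same $\Sigma^2$ regularity is also indispensable for your uniqueness step: $\mu[\cdot]$ is \emph{not} Lipschitz with respect to the $L^2$ distance, and the usable estimate is $|\mu[\psi]-\mu[\varphi]|\lesssim\big(\|\psi\|_{\Sigma^2}+\|\varphi\|_{\Sigma^2}\big)\|\psi-\varphi\|_{L^2}$, which only makes sense because solutions lie in $L^2_t\Sigma^2$. Without a lemma of the type \eqref{eq:HSigma} your argument does not close; with it, the rest of your outline matches the paper's proof.
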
 

A similar strategy using an iterative scheme has been used also in the context of nonlocal heat equations, see \cite{CaLi09,AnCaSh22}. 
\newline
Let us remark that Theorems \ref{thm:lwp2} and \ref{thm:lwp1} apply to a wider range of nonlinearities with respect to the result in \cite{ChMaSp15}, where the authors assume 
$0<\sigma<\frac{d}{2(d-2)^+}$.
\newline
Let us now address the problem of global well-posedness. By recalling the definition \eqref{eq:min_trap},
we see that the quadratic part of the energy defined in \eqref{eq:energy_gpe} is positive definite if and only if $|\Omega|<\omega$. In particular, for $|\Omega|<\omega$ and for any $u\in\Sigma$ we have
\begin{equation*}
\int\frac12|\nabla u|^2+V|u|^2-\bar u(\Omega\cdot Lu)\,dx
\geq c\int|\nabla u|^2+|x|^2|u|^2\,dx,
\end{equation*}
for some $c>0$.
The case $|\Omega|>\omega$ could lead to instabilities in the dynamics, see for instance \cite{ArNeSp19}, where this issue is studied for the Hamiltonian evolution \eqref{eq:gp1}.

\begin{theorem} \label{thm:gwp}
Let $0 \leq \sigma < \frac12$ with $|\Omega| < \frac{\omega}{\sqrt{2}}$, or
$\frac{1}{2} \leq \sigma < \frac{2}{(d-2)^+}$ with $|\Omega| < \omega$. If $g < 0$, we further assume that $\sigma < \frac{2}{d}$. Then for any  $ \psi_0 \in \Sigma(\R^d)$ there exists a unique global solution $\psi \in C([0,\infty), \Sigma(\R^d))$ to the Cauchy problem \eqref{eq:gpe}. 
\end{theorem}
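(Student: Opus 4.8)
The plan is to promote the local solution furnished by Theorems~\ref{thm:lwp2} and~\ref{thm:lwp1} to a global one through the blow-up alternative: it suffices to exhibit an a priori bound on $\|\psi(t)\|_\Sigma$ that holds uniformly on the whole existence interval $[0,T_{max})$, since this rules out the scenario $\|\psi(t)\|_\Sigma\to\infty$. The two conserved/monotone quantities do the work. Mass conservation fixes $\|\psi(t)\|_{L^2}=\|\psi_0\|_{L^2}=:M_0$, and the dissipation identity~\eqref{eq:en_decrease} gives the monotonicity $E[\psi(t)]\le E[\psi_0]$ for all $t$. Because $\|u\|_\Sigma^2$ is comparable to $\|u\|_{L^2}^2+\|\nabla u\|_{L^2}^2+\||\cdot| u\|_{L^2}^2$, and the first summand is already controlled, the problem reduces to bounding the Dirichlet and potential energies in terms of $E[\psi_0]$ and $M_0$.

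Accordingly, I would split the energy as $E[\psi]=Q[\psi]+\frac{g}{\sigma+1}\|\psi\|_{L^{2\sigma+2}}^{2\sigma+2}$, where
\begin{equation*}
Q[u]=\int\frac12|\nabla u|^2+V|u|^2-\bar u(\Omega\cdot Lu)\,dx
\end{equation*}
is the quadratic part. In both regimes of the statement one has $|\Omega|<\omega$ (note $\omega/\sqrt2<\omega$), so the coercivity estimate recorded just before the theorem applies and yields $c>0$ with $Q[u]\ge c\big(\|\nabla u\|_{L^2}^2+\||\cdot| u\|_{L^2}^2\big)$. Hence an upper bound on $Q[\psi(t)]$ is equivalent to the desired $\Sigma$-bound.

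When $g\ge0$ the nonlinear term is nonnegative, so $Q[\psi(t)]\le E[\psi(t)]\le E[\psi_0]$ and we are done immediately. The delicate case is $g<0$, where the focusing term has to be absorbed. Here I would use the Gagliardo--Nirenberg inequality together with mass conservation,
\begin{equation*}
\|\psi(t)\|_{L^{2\sigma+2}}^{2\sigma+2}\le C_{GN}\,\|\nabla\psi(t)\|_{L^2}^{d\sigma}\,\|\psi(t)\|_{L^2}^{2\sigma+2-d\sigma}=C_{GN}M_0^{2\sigma+2-d\sigma}\,\|\nabla\psi(t)\|_{L^2}^{d\sigma},
\end{equation*}
which combined with coercivity gives
\begin{equation*}
c\,\|\nabla\psi(t)\|_{L^2}^2\le Q[\psi(t)]\le E[\psi_0]+\frac{|g|}{\sigma+1}\,C_{GN}M_0^{2\sigma+2-d\sigma}\,\|\nabla\psi(t)\|_{L^2}^{d\sigma}.
\end{equation*}
The assumption $\sigma<\frac2d$ is exactly the condition $d\sigma<2$ under which Young's inequality absorbs the last term into $\frac c2\|\nabla\psi(t)\|_{L^2}^2$, at the price of a constant depending only on $g,\sigma,d,M_0$; rearranging then bounds $\|\nabla\psi(t)\|_{L^2}^2$, hence $Q[\psi(t)]$ and $\||\cdot|\psi(t)\|_{L^2}^2$, uniformly in $t$.

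I expect the genuine obstacle to be precisely this focusing estimate: one must verify that the subcriticality threshold $\sigma<2/d$ is exactly what makes Young's inequality applicable and that the absorption constant stays finite. A secondary, technical point is to make the dissipation identity~\eqref{eq:en_decrease} rigorous at the $\Sigma$ regularity of the constructed solution — formally it comes from pairing the equation with $\overline{\partial_t\psi}$ and taking real parts (the $\mu[\psi]\psi$ term drops out by mass conservation), but it should be justified through the approximation scheme of the local theory. Once the uniform $\Sigma$-bound is established, the blow-up alternative forces $T_{max}=\infty$, and uniqueness is inherited from Theorems~\ref{thm:lwp2} and~\ref{thm:lwp1}.
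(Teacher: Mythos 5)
Your proposal is correct and follows essentially the same route as the paper: coercivity of the quadratic part of the energy for $|\Omega|<\omega$ (estimate \eqref{eq:sigmaEn}), monotonicity of the energy from \eqref{eq:en_decrease}, and, for $g<0$, Gagliardo--Nirenberg with mass conservation to absorb the focusing term using $d\sigma<2$, followed by the blow-up alternative. The only cosmetic difference is that you spell out the Young's inequality absorption step, which the paper leaves implicit in the line ``Since $d\sigma<2$, we obtain again a uniform-in-time bound.''
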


We now investigate the large-time behavior of global solutions. 
As remarked above, the same question was already addressed in \cite{ChMaSp15} for model \eqref{eq:gp2} in the case when $\mu$ is constant. In this case, the energy dissipation is not sign-definite.
Furthermore, no information can be inferred in general about the asymptotic behavior of the total mass.
This leads the authors of \cite{ChMaSp15} to define the global attractor as the set of all initial data that yield a global solution for all $t\in\R$, i.e. both forward and backward in time. 
\newline
In model \eqref{eq:gpe}, the fact that $\mu=\mu[\psi]$ is given by formula \eqref{eq:mu_gp}, changes substantially the asymptotic behavior of solutions. The energy balance \eqref{eq:en_decrease} heuristically implies that the asymptotic states are given by stationary solutions, whose $L^2$-norm is determined by the initial datum. This property will be rigorously proved in Proposition \ref{prp:omega} below.
\newline
Let us start by discussing equation \eqref{eq:gpe} when $g=0$. We will refer to this setting as the linear case, even though $\mu[\psi] \neq 0$, because it may be seen as the linear evolution constrained on the manifold defined in \eqref{eq:man}.
\newline
In this case, we determine the asymptotic behavior in terms of the decomposition of the initial datum with respect to the eigenspaces of the linear operator
\begin{equation}\label{eq:lin_opGP}
H_\Omega =	- \frac{1}{2} \Delta + V - \Omega \cdot L.
\end{equation}
More precisely, let $\sigma(H_\Omega) = \{\lambda_{n}\}_{n\geq0}$ denote the spectrum of $H_\Omega$ and let $\mathcal W_n$ be the eigenspace associated with $\lambda_n$, namely
\begin{equation*}
\mathcal W_{n}=\{u\in L^2(\R^d)\,:\,H_\Omega u=\lambda_nu\}.
\end{equation*}
Then, if $n^\ast\geq0$ is such that $\lambda_{n^\ast}$ is the smallest eigenvalue in the decomposition of $\psi_0$,
\begin{equation}\label{eq:minEig}
\lambda_{n^\ast} = \inf \left\{ \lambda_{n} : \, \exists\, \varphi \in \mathcal W_{n} : \, (\varphi,\psi_0) \neq 0 \right\},
\end{equation}
we can prove the following result.

\begin{theorem}\label{thm:lin_asym}
Let $g =0$, $\psi_0 \in \Sigma(\R^d)$ and  $\psi \in C([0,\infty),\Sigma(\R^d))$ be the corresponding solution. 
Then 
	\begin{equation*}
		\lim_{t \rightarrow \infty} \mu[\psi(t)] = \lambda_{n^\ast},
	\end{equation*}
 where $\lambda_{n^\ast}$ is defined in \eqref{eq:minEig} and 
	\begin{equation*}
		\lim_{t \rightarrow \infty}\inf_{u \in \mathcal W_{n^\ast}}\| \psi(t) - u\|_{\Sigma} = 0.  
	\end{equation*}
\end{theorem}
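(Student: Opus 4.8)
The plan is to diagonalize the linear flow against the eigenbasis of $H_\Omega$ and reduce the dynamics to a replicator-type system for the spectral weights. Since $V$ is confining and $|\Omega|<\omega$ (which is guaranteed by the hypotheses ensuring the global solution of the statement, since every regime in Theorem \ref{thm:gwp} forces $|\Omega|<\omega$), the operator $H_\Omega$ is self-adjoint with compact resolvent, hence with purely discrete spectrum $\{\lambda_n\}$ and an orthonormal basis of eigenfunctions $\{\phi_n\}\subset\Sigma$; moreover its quadratic form is coercive, $(u,H_\Omega u)\ge c\|u\|_\Sigma^2$. Writing $\psi(t)=\sum_n c_n(t)\phi_n$ with $c_n=(\phi_n,\psi)$, I would pair the equation (with $g=0$) against $\phi_n$ and use self-adjointness, $(\phi_n,H_\Omega\psi)=(H_\Omega\phi_n,\psi)=\lambda_n c_n$, to obtain $(\ii-\gamma)\dot c_n=(\lambda_n-\mu[\psi])c_n$. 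Observing that $\mu[\psi(t)]=(\psi,H_\Omega\psi)/\|\psi\|_{L^2}^2$ is exactly the Rayleigh quotient and that $M:=\|\psi(t)\|_{L^2}^2=\|\psi_0\|_{L^2}^2$ is conserved, the computation $\frac{d}{dt}|c_n|^2=2\,\mathrm{Re}(\bar c_n\dot c_n)$ together with $\mathrm{Re}\,\frac{1}{\ii-\gamma}=-\frac{\gamma}{1+\gamma^2}$ gives, for $a_n:=|c_n|^2$,
\begin{equation*}
\dot a_n=-\tfrac{2\gamma}{1+\gamma^2}(\lambda_n-\mu)a_n,\qquad \mu=\tfrac{1}{M}\sum_n\lambda_n a_n .
\end{equation*}

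Next I would establish monotonicity and the first limit. Differentiating the Rayleigh quotient and using the system above yields $\dot\mu=-\frac{2\gamma}{1+\gamma^2}\big(\tfrac1M\sum_n\lambda_n^2 a_n-\mu^2\big)\le0$, the bracket being a (nonnegative) variance; this is the spectral incarnation of the energy identity \eqref{eq:en_decrease}. Since $a_n(0)=0$ forces $a_n\equiv0$, the solution stays in the closed span of the modes present in $\psi_0$, on which $H_\Omega\ge\lambda_{n^\ast}$; hence $\mu(t)\ge\lambda_{n^\ast}$ for all $t$ and $\mu(t)\downarrow\mu_\infty\ge\lambda_{n^\ast}$. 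To show $\mu_\infty=\lambda_{n^\ast}$ I argue by contradiction: choose $n_0$ with $\lambda_{n_0}=\lambda_{n^\ast}$ and $a_{n_0}(0)>0$; as $\mu\ge\lambda_{n^\ast}$ the weight $a_{n_0}$ is non-decreasing, and if $\mu_\infty\ge\lambda_{n^\ast}+\delta$ for some $\delta>0$, then $\dot a_{n_0}\ge\frac{2\gamma\delta}{1+\gamma^2}a_{n_0}$ for large $t$, forcing exponential blow-up of $a_{n_0}$ and contradicting $a_{n_0}(t)\le M$. This yields $\lim_{t\to\infty}\mu[\psi(t)]=\lambda_{n^\ast}$.

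For the convergence to the eigenspace, let $P_{n^\ast}$ be the $L^2$-orthogonal projection onto $\mathcal W_{n^\ast}$ and set $\psi^\perp:=\psi-P_{n^\ast}\psi$. From $E[\psi]=\mu M$ one gets $E[\psi]-\lambda_{n^\ast}M=\sum_{\lambda_n>\lambda_{n^\ast}}(\lambda_n-\lambda_{n^\ast})a_n\to0$, and with the spectral gap $\delta_0:=\min\{\lambda_n-\lambda_{n^\ast}:\lambda_n>\lambda_{n^\ast}\}>0$ this gives $\|\psi^\perp\|_{L^2}^2\le\delta_0^{-1}(E[\psi]-\lambda_{n^\ast}M)\to0$. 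To upgrade to the $\Sigma$-norm I use coercivity and the identity $(\psi^\perp,H_\Omega\psi^\perp)=\sum_{\lambda_n>\lambda_{n^\ast}}\lambda_n a_n=(E[\psi]-\lambda_{n^\ast}M)+\lambda_{n^\ast}\|\psi^\perp\|_{L^2}^2\to0$, whence $\|\psi^\perp\|_\Sigma^2\le c^{-1}(\psi^\perp,H_\Omega\psi^\perp)\to0$. Since $P_{n^\ast}\psi(t)\in\mathcal W_{n^\ast}$, this gives $\inf_{u\in\mathcal W_{n^\ast}}\|\psi(t)-u\|_\Sigma\le\|\psi^\perp(t)\|_\Sigma\to0$, as claimed.

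The steps are formally transparent, so the main obstacle is rigor rather than ideas. The delicate point is justifying the coordinate system $\dot a_n=-\frac{2\gamma}{1+\gamma^2}(\lambda_n-\mu)a_n$ for a solution only known a priori to lie in $C([0,\infty),\Sigma)$: one must legitimately differentiate $t\mapsto(\phi_n,\psi(t))$ and transfer $H_\Omega$ onto $\phi_n$, which I would do using the equation together with the regularity $\partial_t\psi\in L^2$ furnished by \eqref{eq:en_decrease}, and then justify the interchange of summation with $\frac{d}{dt}$ in the computation of $\dot\mu$ and in the identity for $(\psi^\perp,H_\Omega\psi^\perp)$ by dominated convergence, using the uniform-in-time $\Sigma$-bound. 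A secondary point is the passage from $L^2$ to $\Sigma$ convergence, which hinges on the coercivity of $H_\Omega$ and hence on $|\Omega|<\omega$, precisely the condition underlying the global solution assumed in the statement.
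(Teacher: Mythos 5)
Your proposal is correct, and its engine is the same as the paper's: decompose $\psi$ in the eigenbasis of $H_\Omega$, derive the closed ODE for the mode amplitudes (your $\dot a_n=-\tfrac{2\gamma}{1+\gamma^2}(\lambda_n-\mu)a_n$, with the constant actually written correctly, is the paper's $b_{n,k}(t)=b_{n,k}(0)\exp\bigl(\gamma\int_0^t(\mu-\lambda_n)\,d\tau\bigr)$ in squared-modulus form), note that absent modes stay absent, and derive a contradiction with conservation of mass if $\mu_\infty>\lambda_{n^\ast}$. Where you genuinely diverge is in two places, both to your advantage in economy. First, the paper spends roughly half of its proof on a weak-compactness step: it extracts $\psi(t_j)\rightharpoonup\psi_\infty$ using the uniform $\Sigma$-bound and $\partial_t\psi\in L^2_tL^2_x$, passes to the limit in the equation, and uses the compact embedding $\Sigma\hookrightarrow L^2$ to conclude that $\mu_\infty$ is an eigenvalue of $H_\Omega$ before running the mass-conservation contradiction. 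Your version of the contradiction never needs $\mu_\infty$ to be an eigenvalue — any limit strictly above $\lambda_{n^\ast}$ forces exponential growth of $a_{n_0}$ — so that entire step can be omitted; this is a real simplification, not an oversight. Second, for the convergence to $\mathcal W_{n^\ast}$ the paper proves modewise exponential decay, $|b_{n,k}(t)|\le|b_{n,k}(T)|e^{-\gamma\delta(t-T)}$ for all $\lambda_n>\lambda_{n^\ast}$, and sums; you instead combine the spectral gap $\delta_0$ with the convergence of the Rayleigh quotient, $\sum_{\lambda_n>\lambda_{n^\ast}}(\lambda_n-\lambda_{n^\ast})a_n=M(\mu-\lambda_{n^\ast})\to0$, and then upgrade from $L^2$ to $\Sigma$ via coercivity of $(\,\cdot\,,H_\Omega\,\cdot\,)$. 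Your route is softer and shorter; the paper's buys a quantitative exponential rate (not claimed in the theorem statement, but implicit in its proof). The rigor caveats you flag — differentiating $(\phi_n,\psi(t))$ and interchanging sum and derivative — apply equally to the paper's own formal computation, and are handled the same way (via $\partial_t\psi\in L^2_tL^2_x$ and the uniform $\Sigma$-bound), so they do not constitute a gap relative to the paper.
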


In particular, if $\lambda_{n^\ast}$ is a simple eigenvalue,  then there exists an eigenfunction $\psi_{n^\ast} \in \mathcal {W}_{n^\ast}$, with 
$\|\psi_{n^\ast}\|_{L^2} = \| \psi_0\|_{L^2}$ 
such that 
\begin{equation*}
\lim_{t \rightarrow \infty}\| \psi(t)-\psi_{n^\ast}\|_{\Sigma} = 0.  
\end{equation*}
This is for example the case for the smallest eigenvalue $\lambda_0$ in the spectrum of $H_\Omega$. 
Thus, any initial condition having a non-zero component in $\mathcal W_{\lambda_0}$ evolves into a solution that asymptotically converges to the least energy state, with a given $L^2$-norm.
\newline
In general, the linear dynamics leaves invariant the eigenspaces $\mathcal W_n$ associated with the operator $H_\Omega$.
On the contrary, the power-type nonlinearity destroys this property, so that for $g\neq0$ there are no linear invariant subspaces anymore.
\newline
In the nonlinear case, we expect that stationary solutions play the same role as eigenfunctions of $H_\Omega$ when $g=0$. We recall that a stationary solution to \eqref{eq:gpe} satisfies  the following equation
\begin{equation}\label{eq:sol_wa}
0 =  -\frac{1}{2}\Delta Q + VQ + g|Q|^{2\sigma} Q - \Omega \cdot LQ - \mu[Q] Q.
\end{equation}
It is well known that, under the same assumptions of Theorem \ref{thm:gwp}, there are infinitely many solutions to \eqref{eq:sol_wa}. 
Among stationary solutions, a particular role is played by minimizers of the energy with fixed total mass,
 \begin{equation}\label{eq:min_pb}
\tau = \inf_{u \in \Sigma} \{ E[u]\, : \, \| u\|_{L^2} =m\}.
\end{equation}
Under the assumptions of Theorem \ref{thm:gwp}, a minimizer to problem \eqref{eq:min_pb} always exists \cite{Se02, ArNeSp19}.
In what follows we will refer to them as ground states.
\newline
By computing the Euler-Lagrange equations and by exploiting Pohozaev's identity, it is straightforward to show that any ground state satisfies \eqref{eq:sol_wa}.
Moreover, we are going to call excited states all stationary solutions, whose total energy is strictly larger than the ground state energy.
Given $m>0$, the set of all stationary solutions with total mass $m$, is denoted by
\begin{equation}
	\mathcal{S}_m = \{u \in \Sigma(\R^d): \,  \| u\|_{L^2}=m, \, u \mbox{ solves } \eqref{eq:sol_wa} \}. \label{eq:S}
\end{equation}
In general, the monotonicity of the total energy, inferred from \eqref{eq:en_decrease}, implies that
\begin{equation}\label{eq:asy_en}
\lim_{t\to\infty}E[\psi(t)]=E_\infty\geq\tau>-\infty,
\end{equation}
where $\tau$ is defined in \eqref{eq:min_pb}. Furthermore, from the energy dissipation in \eqref{eq:en_decrease}, we also expect that the asymptotic states for \eqref{eq:gpe} are determined by stationary solutions. 
\begin{theorem}\label{thm:subconv}
Under the conditions stated in Theorem \ref{thm:gwp}, let $\psi_0 \in \Sigma(\R^d)$ be such that $\|\psi_0\|_{L^2}=m$ and let $\psi \in C([0,\infty),\Sigma(\R^d))$ be the corresponding solution to \eqref{eq:gpe}. Then there exists a sequence $\{t_n\} \subset \R^+$, $t_n \rightarrow \infty$ as $n \rightarrow \infty$ and a stationary state $Q \in \mathcal{S}_m$  such that 
	\begin{equation*}
 \lim_{n\to\infty}\|\psi(t_n)-Q\|_{\Sigma}=0,
\end{equation*}
and
	\begin{equation*}
E_\infty= E[Q],
	\end{equation*}
where $E_\infty$ is defined in \eqref{eq:asy_en}.
\end{theorem}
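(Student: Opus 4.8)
The plan is to combine the energy dissipation identity \eqref{eq:en_decrease} with the compactness supplied by the confining potential, and then to upgrade the resulting weak convergence to strong convergence in $\Sigma$ by exploiting the coercive quadratic form
\[
\mathcal B[u]:=\int \tfrac12|\nabla u|^2+V|u|^2-\bar u(\Omega\cdot Lu)\,dx=\langle H_\Omega u,u\rangle .
\]
From \eqref{eq:energy_gpe} and \eqref{eq:mu_gp} one reads off the two algebraic identities $E[u]=\mathcal B[u]+\frac{g}{\sigma+1}\int|u|^{2\sigma+2}\,dx$ and $m^2\mu[u]=\mathcal B[u]+g\int|u|^{2\sigma+2}\,dx$, which eliminating $\mathcal B[u]$ give $m^2\mu[u]=E[u]+g\tfrac{\sigma}{\sigma+1}\int|u|^{2\sigma+2}\,dx$; these relations are the algebraic engine of the argument. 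First I would record that the global solution of Theorem \ref{thm:gwp} is bounded in $\Sigma$ uniformly in time: since $t\mapsto E[\psi(t)]$ is non-increasing, the coercivity of $\mathcal B$ (which holds because $|\Omega|<\omega$ in all admissible cases) controls $\|\nabla\psi(t)\|_{L^2}^2+\||\cdot|\psi(t)\|_{L^2}^2$ when $g\ge0$, while for $g<0$ the restriction $\sigma<\tfrac2d$ lets one absorb the focusing term by Gagliardo--Nirenberg; together with mass conservation $\|\psi(t)\|_{L^2}=m$ this yields $\sup_{t\ge0}\|\psi(t)\|_\Sigma<\infty$.

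Since \eqref{eq:asy_en} gives $E_\infty>-\infty$, the dissipation identity \eqref{eq:en_decrease} furnishes $\int_0^\infty\|\partial_t\psi(s)\|_{L^2}^2\,ds=\tfrac{1}{2\gamma}(E[\psi_0]-E_\infty)<\infty$, so $\liminf_{t\to\infty}\|\partial_t\psi(t)\|_{L^2}=0$ and there is a sequence $t_n\to\infty$ with $\|\partial_t\psi(t_n)\|_{L^2}\to0$. Using the uniform $\Sigma$-bound I would extract (after relabeling) $\psi(t_n)\rightharpoonup Q$ weakly in $\Sigma$. The compact embedding of $\Sigma$ into $L^p$ for every $2\le p<2^\ast$, with $2^\ast=\tfrac{2d}{(d-2)^+}$ — which is precisely where the trapping potential $V$ forbids any loss of compactness at spatial infinity — then promotes this to strong convergence $\psi(t_n)\to Q$ in $L^2$ and in $L^{2\sigma+2}$ (note $2\sigma+2<2^\ast$), and to a.e.\ convergence along a further subsequence. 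In particular $\|Q\|_{L^2}=m$ and $\int|\psi(t_n)|^{2\sigma+2}\to\int|Q|^{2\sigma+2}$.

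Next I would identify $Q$ as a stationary state. Writing the equation as $H_\Omega\psi(t_n)+g|\psi(t_n)|^{2\sigma}\psi(t_n)-\mu[\psi(t_n)]\psi(t_n)=(\ii-\gamma)\partial_t\psi(t_n)$ and testing against $\phi\in C_c^\infty(\R^d)$, every term passes to the limit: the quadratic part by weak $\Sigma$-convergence, the nonlinear term by the strong $L^{2\sigma+2}$ and a.e.\ convergence, the right-hand side because $\partial_t\psi(t_n)\to0$ in $L^2$, and $\mu[\psi(t_n)]\to\mu_\infty:=\tfrac{1}{m^2}\bigl(E_\infty+g\tfrac{\sigma}{\sigma+1}\int|Q|^{2\sigma+2}\,dx\bigr)$ by the identity above with $E[\psi(t_n)]\to E_\infty$. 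In the limit $Q$ solves $H_\Omega Q+g|Q|^{2\sigma}Q=\mu_\infty Q$ in $\Sigma'$; pairing this with $Q\in\Sigma$ (legitimate by density, since all terms extend continuously to $\phi\in\Sigma$) gives $\mathcal B[Q]+g\int|Q|^{2\sigma+2}=\mu_\infty m^2$, which by the definition of $\mu[\cdot]$ reads exactly $\mu_\infty=\mu[Q]$. Hence $Q$ solves \eqref{eq:sol_wa} and $Q\in\mathcal S_m$. Substituting $\mu_\infty=\mu[Q]$ into the two expressions for $\mu_\infty$ and cancelling the common nonlinear term forces $E[Q]=E_\infty$. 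Finally, from $E[\psi(t_n)]\to E_\infty=E[Q]$ and $\int|\psi(t_n)|^{2\sigma+2}\to\int|Q|^{2\sigma+2}$ the identity $E=\mathcal B+\frac{g}{\sigma+1}\int|\cdot|^{2\sigma+2}$ yields $\mathcal B[\psi(t_n)]\to\mathcal B[Q]$; since $\mathcal B[\cdot]+\|\cdot\|_{L^2}^2$ is, for $|\Omega|<\omega$, an inner-product norm on $\Sigma$ equivalent to the standard one and $\|\psi(t_n)\|_{L^2}=\|Q\|_{L^2}=m$, the equivalent norms converge, and together with $\psi(t_n)\rightharpoonup Q$ the Radon--Riesz property gives $\|\psi(t_n)-Q\|_\Sigma\to0$.

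The main obstacle is the passage from weak to strong convergence: weak $\Sigma$-compactness alone cannot exclude that part of the mass and energy escapes in the limit, and it is the interplay of the confining potential (through the compact embedding of $\Sigma$ into subcritical $L^p$) with the self-consistency relation $\mu_\infty=\mu[Q]$ — which pins down $E[Q]=E_\infty$ and thereby $\mathcal B[\psi(t_n)]\to\mathcal B[Q]$ — that makes the Radon--Riesz argument applicable. A secondary but essential point is the convergence $\mu[\psi(t_n)]\to\mu_\infty$, which genuinely requires the strong $L^{2\sigma+2}$ convergence of the nonlinear functional and would fail under mere weak convergence.
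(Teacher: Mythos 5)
Your proposal is correct and follows essentially the same route as the paper's proof of this statement (Theorem \ref{thm:asyconv}): energy monotonicity plus the dissipation identity to extract $t_n$ with $\partial_t\psi(t_n)\to0$, weak $\Sigma$-compactness and the compact embedding into subcritical $L^p$ to identify the limit as a stationary state with the same mass, the self-consistency $\mu_\infty=\mu[Q]$, and the equivalence of $(H_\Omega u,u)$ with $\|u\|_\Sigma^2$ combined with weak convergence to upgrade to strong $\Sigma$-convergence. The only cosmetic difference is that you derive $E[Q]=E_\infty$ before the strong convergence by cancelling the nonlinear term in the two expressions for $\mu_\infty$, whereas the paper obtains it afterwards from the already-established strong convergence; both rest on the same algebraic identities.
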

Let us remark that, while it is always possible to show that $E_\infty=E[Q]$, for some $Q\in\mathcal S_m$, in general we are not able to characterize the stationary solution that determines the asymptotic behavior of the dynamics. In fact, it is not always true that $E_\infty=\tau$, as all stationary states are solutions to \eqref{eq:gp2}. On the other hand, the monotonicity of the total energy suggests that all excited states are unstable under the dynamics. If the initial energy is strictly smaller than the energy of any excited state, then we expect the solution to converge towards the ground state. These arguments lead to exploit the following conjecture.

\begin{conjecture}[Fundamental gap conjecture]\label{concon}  
	There exists $\delta >0$ such that for any $Q \in \mathcal{S}_m$, either $Q = e^{\ii\phi}Q_{gs}$ for some $\phi \in \R$, or $E[Q] \geq E[Q_{gs}] + \delta$. 
\end{conjecture}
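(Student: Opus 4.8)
The plan is to argue by contradiction, reducing the gap statement to a \emph{local rigidity} property of the ground state inside the set $\mathcal{S}_m$. Suppose the conjecture fails. Then for every $\delta>0$ there is an excited state violating the bound, so we may extract a sequence $\{Q_k\}\subset\mathcal{S}_m$ of stationary solutions with $\|Q_k\|_{L^2}=m$, $Q_k\neq e^{\ii\phi}Q_{gs}$ for all $\phi\in\R$, and $E[Q_k]\to\tau$, where $\tau=E[Q_{gs}]$ is the ground state energy defined in \eqref{eq:min_pb}. Since each $Q_k$ has mass $m$ and energy tending to $\tau$, the sequence is in particular a minimizing sequence for \eqref{eq:min_pb}. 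The strategy is to show that such a sequence must converge, up to phase, to a genuine ground state, and then to contradict the assumption $Q_k\neq e^{\ii\phi}Q_{gs}$ by a nondegeneracy argument.

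The first step is to recover compactness. Under the hypotheses of Theorem \ref{thm:gwp} one has $|\Omega|<\omega$, so the quadratic part of the energy \eqref{eq:energy_gpe} is coercive and controls the full $\Sigma$-norm; together with the uniform energy bound this yields $\|Q_k\|_{\Sigma}\leq C$. Exploiting the compact embedding $\Sigma\hookrightarrow L^{2\sigma+2}(\R^d)$ (valid because the confining potential $V$ localizes mass and $2\sigma+2<\frac{2d}{(d-2)^+}$), we pass to a subsequence converging weakly in $\Sigma$ and strongly in $L^{2\sigma+2}$ to a limit $Q_\infty$ with $\|Q_\infty\|_{L^2}=m$. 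Weak lower semicontinuity of the quadratic part, combined with $E[Q_k]\to\tau$, forces $E[Q_\infty]=\tau$ and upgrades the convergence to strong convergence in $\Sigma$, so that $Q_\infty$ is a ground state. This is the same compactness mechanism underlying the existence results in \cite{Se02, ArNeSp19}.

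Next, I would pass to the limit in the stationary equation. Since $Q_k\to Q_\infty$ strongly in $\Sigma$, the nonlocal multiplier converges, $\mu[Q_k]\to\mu[Q_\infty]=:\mu_\infty$, and $Q_\infty$ solves \eqref{eq:sol_wa} with multiplier $\mu_\infty$. Assuming uniqueness of the ground state up to phase, $Q_\infty=e^{\ii\phi_0}Q_{gs}$. To reach a contradiction I would use the nondegeneracy of the linearized operator $\mathcal L$ obtained by differentiating the left-hand side of \eqref{eq:sol_wa} at $Q_\infty$ (including the derivative of the nonlocal term $\mu[\cdot]$, which contributes a finite-rank correction). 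The $U(1)$-invariance of \eqref{eq:sol_wa} always produces the kernel element $\ii Q_\infty$; the nondegeneracy hypothesis is that, on the mass-constraint manifold $\mathcal M$, the kernel of $\mathcal L$ is spanned by this single phase direction. Granting this, an implicit function theorem argument in a slice transversal to the phase orbit shows that the only solutions of \eqref{eq:sol_wa} of mass $m$ in a $\Sigma$-neighborhood of $Q_\infty$ are the phase rotations $e^{\ii\phi}Q_\infty=e^{\ii(\phi+\phi_0)}Q_{gs}$. For $k$ large, $Q_k$ lies in this neighborhood, so $Q_k=e^{\ii\phi}Q_{gs}$, contradicting the choice of the sequence, and the argument closes.

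The main obstacle is precisely the nondegeneracy of $\mathcal L$, and this is the reason the statement is posed as a conjecture rather than a theorem. The rotation term $-\Omega\cdot L$ breaks the radial symmetry of the problem, so the classical uniqueness and nondegeneracy results for ground states of NLS with harmonic potentials (e.g.\ Kwong-type arguments for radial profiles) do not apply, and the complex-valued nature of the rotating ground state prevents the usual comparison and Sturm-oscillation techniques. Moreover, the nonlocal multiplier $\mu[\cdot]$ renders $\mathcal L$ non-self-adjoint and couples its real and imaginary parts, so establishing that its kernel reduces to the phase direction would require a refined spectral analysis of the linearized rotating operator, together with a proof of uniqueness of the ground state up to phase in the regime $|\Omega|<\omega$. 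Both of these are delicate open problems in their own right.
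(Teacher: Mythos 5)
There is a genuine gap, and it is the one you yourself concede: your argument is not a proof of Conjecture \ref{concon} but a \emph{conditional reduction} of it to the nondegeneracy of the linearized operator $\mathcal L$ at the ground state (equivalently, to local rigidity of $Q_{gs}$ inside $\mathcal{S}_m$ modulo phase). That hypothesis is precisely as open as the conjecture itself, so the argument trades one unproven statement for another of at least equal difficulty; in effect it is circular as a proof strategy. Note also that the paper does not prove this statement either -- it is posed as a conjecture exactly because, as the authors write, a theoretical proof is missing and only numerical and formal evidence (e.g.\ \cite{cances2010numerical, BaRu18}) is available. So there is no proof in the paper to match yours against; judged on its own, your attempt establishes only: ``if the ground state is nondegenerate modulo the phase symmetry, then the gap conjecture holds.''

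What you do prove correctly is the compactness step: negating the conjecture produces $\{Q_k\}\subset\mathcal{S}_m$, none a phase multiple of $Q_{gs}$, with $E[Q_k]\to\tau$; coercivity of the quadratic form for $|\Omega|<\omega$ (inequality \eqref{eq:sigmaEn}), the compact embedding of Proposition \ref{prp:comemb}, and the uniqueness-up-to-phase statement of Proposition \ref{prp:gsGP} then give $Q_k\to e^{\ii\phi_0}Q_{gs}$ strongly in $\Sigma$ along a subsequence. That part is sound and standard. The irreducible obstacle is the final implicit-function-theorem step, and it is worth being concrete about why it is not merely ``technically'' open: when the trap is isotropic ($\omega_j\equiv\omega$) the stationary equation \eqref{eq:sol_wa} is invariant under spatial rotations about the $\Omega$-axis, so whenever the minimizer breaks this symmetry (vortex-carrying ground states, which occur precisely in the rotating regime of interest), the generator of rotations applied to $Q_\infty$ furnishes a kernel element of $\mathcal L$ \emph{in addition} to $\ii Q_\infty$. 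Then the kernel is not spanned by the phase direction, the transversal-slice argument does not close, and local uniqueness can at best hold modulo phase \emph{and} rotation -- a conclusion that is compatible with the conjecture only if those rotates are themselves phase multiples of $Q_{gs}$, which is again the open rigidity question. So the nondegeneracy hypothesis you invoke is not a benign technical assumption; it encodes the entire difficulty, and your last paragraph correctly identifies this, which is exactly why the statement remains a conjecture.
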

The fundamental gap conjecture is largely studied in the mathematical literature, due to its relevance.
There are several numerical works and formal expansions  confirming this conjecture, see for instance \cite{cances2010numerical, BaRu18} and references therein, although a theoretical proof seems to be still missing. 
Under the hypothesis that this conjecture is true, we are able to improve our convergence result in two directions. First, we show that, up to phase shifts, the dynamics asymptotically converges to the ground state. Second, no extraction of a sequence of times is needed. This is achieved by adapting the convexity argument introduced by Cazenave and Lions \cite{CaLi82}. 

\begin{theorem}\label{thm:conv}
		Assume that Conjecture \ref{concon} is true. Under the conditions stated in Theorem \ref{thm:gwp}, let $\psi_0 \in \Sigma(\R^d)$ be such that $E[\psi_0] < E[Q_{gs}] + \delta$ and  $\psi \in C([0,\infty),\Sigma(\R^d))$ be the corresponding  solution to \eqref{eq:gpe}. Then there exists $\phi \in [0,2\pi)$ such that
	\begin{equation*}
 \lim_{t\to\infty}\|\psi(t)-e^{\ii \phi} Q_{gs}\|_{\Sigma}=0.
	\end{equation*}
\end{theorem}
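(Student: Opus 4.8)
The plan is to use the hypothesis $E[\psi_0]<E[Q_{gs}]+\delta$ together with the monotonicity of the energy to force the whole trajectory to be a minimizing family for \eqref{eq:min_pb}, and then to upgrade the subsequential convergence of Theorem \ref{thm:subconv} first to convergence onto the ground-state orbit and finally to a single phase.

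First I would observe that, by the dissipation identity \eqref{eq:en_decrease}, the map $t\mapsto E[\psi(t)]$ is non-increasing, so that $E_\infty\le E[\psi_0]<E[Q_{gs}]+\delta$. Theorem \ref{thm:subconv} then provides $Q\in\mathcal S_m$ with $E[Q]=E_\infty<E[Q_{gs}]+\delta$, and Conjecture \ref{concon} excludes the gap alternative, forcing $Q=e^{\ii\phi_\ast}Q_{gs}$ for some $\phi_\ast$. In particular $E_\infty=E[Q_{gs}]=\tau$, and since $\|\psi(t)\|_{L^2}=m$ for all $t$, the family $\{\psi(t)\}$ is minimizing for \eqref{eq:min_pb} as $t\to\infty$.

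Next I would establish orbital convergence, i.e. $d(t):=\inf_{\phi}\|\psi(t)-e^{\ii\phi}Q_{gs}\|_{\Sigma}\to0$. Given any $s_n\to\infty$, the sequence $\psi(s_n)$ is minimizing for \eqref{eq:min_pb}; since $|\Omega|<\omega$ the quadratic part of $E$ is coercive on $\Sigma$ and, $V$ being confining, $\Sigma$ embeds compactly into the relevant Lebesgue spaces, so the standard variational (concentration/convexity) argument of Cazenave and Lions \cite{CaLi82} shows that, up to a subsequence and a phase, $\psi(s_n)\to Q_{gs}$ in $\Sigma$, the minimizer being unique modulo phase by Conjecture \ref{concon}. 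As $s_n$ was arbitrary, $d(t)\to0$; in particular the trajectory is precompact in $\Sigma$ and its $\omega$-limit set is a compact, connected subset of the orbit $\{e^{\ii\phi}Q_{gs}:\phi\in[0,2\pi)\}$.

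The remaining, and hardest, step is to pin down the phase. For $t$ large I would use the modulation $\psi(t)=e^{\ii\theta(t)}(Q_{gs}+w(t))$ with the orthogonality condition $\mathrm{Re}\,(w(t),\ii Q_{gs})=0$, which is solvable precisely because $d(t)\to0$ and which gives $\|w(t)\|_{\Sigma}\to0$. Differentiating the constraint and testing \eqref{eq:gpe} against $\ii Q_{gs}$ produces the modulation identity
\begin{equation*}
\dot\theta(t)\,(m^2+o(1))=\mathrm{Re}\,\big(e^{-\ii\theta(t)}\partial_t\psi(t),\,\ii Q_{gs}\big),
\end{equation*}
so that $|\dot\theta(t)|\lesssim\|\partial_t\psi(t)\|_{L^2}$, and it then suffices to prove $\int_0^\infty\|\partial_t\psi\|_{L^2}\,dt<\infty$, for this forces $\theta(t)\to\theta_\infty$ and hence $\psi(t)\to e^{\ii\theta_\infty}Q_{gs}$ in $\Sigma$. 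I expect this to be the crux: the bound $\int_0^\infty\|\partial_t\psi\|_{L^2}^2\,dt=\tfrac{1}{2\gamma}(E[\psi_0]-E_\infty)<\infty$ coming from \eqref{eq:en_decrease} is not enough on its own. The plan is to close a differential inequality for $\mathcal E(t):=E[\psi(t)]-\tau\ge0$. Writing $\mathcal R[\psi]:=-\tfrac12\Delta\psi+V\psi+g|\psi|^{2\sigma}\psi-\Omega\cdot L\psi-\mu[\psi]\psi$ for the residual of \eqref{eq:sol_wa}, equation \eqref{eq:gpe} gives $\|\partial_t\psi\|_{L^2}^2=(1+\gamma^2)^{-1}\|\mathcal R[\psi]\|_{L^2}^2$ together with $\dot{\mathcal E}(t)=-2\gamma\|\partial_t\psi(t)\|_{L^2}^2$. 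A Łojasiewicz-type lower bound $\|\mathcal R[\psi]\|_{L^2}^2\ge c\,\mathcal E(t)$, valid in a neighbourhood of the orbit and reflecting the convexity (nondegeneracy modulo phase) of $E$ at $Q_{gs}$, then yields $\dot{\mathcal E}\le-c'\mathcal E$, hence exponential decay of $\mathcal E$ and, via $\int_t^\infty\|\partial_t\psi\|_{L^2}^2\,ds=\tfrac{1}{2\gamma}\mathcal E(t)$ and Cauchy--Schwarz on unit time intervals, the integrability $\int_0^\infty\|\partial_t\psi\|_{L^2}\,dt<\infty$. This gives $\int_0^\infty|\dot\theta|\,dt<\infty$ and the claim. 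The genuinely delicate points are the nondegeneracy of $Q_{gs}$ modulo phase underlying this quadratic Łojasiewicz inequality, and the control of the modulation error terms, where the uniqueness up to phase granted by Conjecture \ref{concon} is essential.
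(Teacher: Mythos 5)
Your first two steps are exactly the paper's argument: the monotonicity of the energy together with Theorem \ref{thm:subconv} and Conjecture \ref{concon} forces the subsequential limit to be a phase rotation of $Q_{gs}$ and hence $E_\infty=E[Q_{gs}]=\tau$; and the paper's proof of convergence for all times is precisely the contradiction form of your Cazenave--Lions step. Assuming $\|\psi(t_n)-Q_{gs}\|_\Sigma\ge\varepsilon$, the paper extracts $\psi(t_n)\rightharpoonup u$, uses the compact embedding and weak lower semicontinuity of the equivalent norm $(H_\Omega\,\cdot,\cdot)$ to get $\|u\|_{L^2}=m$ and $E[u]=\tau$, identifies $u$ as a minimizer via Proposition \ref{prp:gsGP}, and upgrades to strong $\Sigma$-convergence by convergence of norms. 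The paper stops there: it performs no modulation, never writes $\psi=e^{\ii\theta(t)}(Q_{gs}+w)$, and never needs $\int_0^\infty\|\partial_t\psi\|_{L^2}\,dt<\infty$.

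Your third step is where the proposal does not close, and it is a genuine gap rather than a stylistic difference. The quadratic \L{}ojasiewicz bound $\|\mathcal R[\psi]\|_{L^2}^2\ge c\,(E[\psi]-\tau)$ near the orbit requires the nondegeneracy of $Q_{gs}$ modulo phase, i.e.\ that the kernel of the linearization of \eqref{eq:sol_wa} at $Q_{gs}$ reduces to $\mathrm{span}\{\ii Q_{gs}\}$ together with coercivity on the orthogonal complement. This is strictly stronger than Conjecture \ref{concon}, which only asserts uniqueness of the minimizer up to phase and an energy gap to excited states; it is not established in the paper and is not available for the rotating ground state (for $\Omega\neq0$ even qualitative properties of $Q_{gs}$ such as symmetry are delicate). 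Without it you do not get $\dot{\mathcal E}\le -c'\mathcal E$, hence not the integrability of $\|\partial_t\psi\|_{L^2}$ in time that your phase estimate $|\dot\theta|\lesssim\|\partial_t\psi\|_{L^2}$ needs; \eqref{eq:en_decrease} only provides square-integrability, as you observe. So as written the argument reduces the theorem to an unproven spectral hypothesis. I will add that you have put your finger on a real subtlety: the compactness argument by itself identifies subsequential limits only as $e^{\ii\varphi}Q_{gs}$ for some $\varphi$, and passing from orbital convergence to convergence with a single fixed phase is exactly the point your (incomplete) modulation machinery is meant to address; the paper resolves it by fixing the phase from the single element of $\omega(\psi_0)$ produced by Theorem \ref{thm:subconv} and running the contradiction argument against that representative, not by integrating a phase ODE.
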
 

The theorem above states that solutions starting from an initial condition that is sufficiently close to the ground state eventually converge to it; consequently the ground state is asymptotically stable, up to a phase shift. 
However, no smallness condition is required, as opposed to the Hamiltonian dynamics.
\newline
We recall that the orbital stability of the ground state for the non-dissipative rotational GP equation \eqref{eq:gp1} was proved in \cite{ArNeSp19}. For qualitative properties of the ground state such as symmetry breaking, we refer the reader to \cite{IgMi06,Se02,Se03}. An alternative approach to study solitary waves and their stability for equation \eqref{eq:gp1} with super-quadratic trapping potentials was recently investigated in \cite{nenciu2023nonlinear}.
\newline
This work is organized as follows: in Section \ref{sec:prelGP}, we will present our notations and some preliminary results. 
Section \ref{sec:localGP} is dedicated to proving the local and global well-posedness results stated in Theorems \ref{thm:lwp2}, \ref{thm:lwp1} and \ref{thm:gwp}. 
In Section \ref{sec:linASGP} we will study the asymptotic behavior of the linear case $g = 0$. Finally, in Section \ref{sec:asNNGP}, we prove our result on the asymptotic behavior in  the nonlinear case.

\section{Preliminaries}\label{sec:prelGP}

In this section, we introduce our notations and recall some mathematical tools. 
For two positive quantities $A$ and $B$ we use the notation
\begin{equation*}
    A \lesssim B
\end{equation*}
with the meaning that there exists a constant $c >0$, not depending on  $A$ and $B$, unless specifically declared, such that
\begin{equation*}
    A \leq cB.
\end{equation*}
We will use the following notation
\begin{equation*}
	\frac{1}{(d-2)^+} = \begin{cases}
		& \infty \ \mbox{ for } \ d=2, \\
		& \frac{1}{d-2} \ \mbox{ for } \ d = 3.
	\end{cases}
\end{equation*}
Given $u,v \in L^2(\R^d)$, the $L^2$-scalar product is defined as
\begin{equation*}
    (u,v) = Re \int u \bar{v} dx.
\end{equation*}
We define the set
\begin{equation*}
		\Sigma^2(\R^d) = \{ u \in \Sigma(\R^d) \,: \, \|(- \Delta + |x|^2) u \|_{L^2} < \infty \} \subset H^2(\R^d).
\end{equation*}
where $\Sigma(\R^d)$ is defined in \eqref{eq:Sigma1}. 
By abuse of notation, we write $xu$ for the function $x \to xu(x)$. We recall some basic facts about the space $\Sigma(\R^d)$, see for example, \cite{KaWe94}, for more details.

\begin{prop}\label{prp:comemb}
 The Hilbert space $\Sigma(\R^d)$ is compactly embedded in $L^p(\R^d)$ for any $p\in \left[2,\frac{2d}{(d-2)^+} \right).$ Moreover, the norm
 \begin{equation*}
 \| u\|_\Sigma^2 = \| \nabla u \|_{L^2}^2 + \| xu\|_{L^2}^2
 \end{equation*}
 is equivalent to the norm induced by the scalar product
 \begin{equation*}
 \langle u,v \rangle_\Sigma = (u,v) + (\nabla u,\nabla v) + (xu,xv).
 \end{equation*}
\end{prop}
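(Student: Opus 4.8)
The plan is to treat the two assertions separately, beginning with the norm equivalence since it is the simpler one. The scalar product induces the norm $\langle u,u\rangle_\Sigma = \|u\|_{L^2}^2 + \|\nabla u\|_{L^2}^2 + \|xu\|_{L^2}^2$, which trivially dominates $\|u\|_\Sigma^2 = \|\nabla u\|_{L^2}^2 + \|xu\|_{L^2}^2$, so only the reverse bound requires an argument, namely controlling $\|u\|_{L^2}$ by $\|u\|_\Sigma$. This is exactly the quantitative uncertainty principle. Working first on Schwartz functions and integrating by parts, using $\mathrm{div}\,x = d$ and $\nabla|u|^2 = 2\,\mathrm{Re}(\bar u\nabla u)$, one finds
\begin{equation*}
d\|u\|_{L^2}^2 = -2\,\mathrm{Re}\int (x\cdot\nabla u)\bar u\,dx \leq 2\|xu\|_{L^2}\|\nabla u\|_{L^2} \leq \|u\|_\Sigma^2,
\end{equation*}
by Cauchy--Schwarz and Young's inequality. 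Hence $\|u\|_{L^2}^2 \leq \tfrac1d\|u\|_\Sigma^2$, and the two norms are equivalent on a dense subspace, thus on all of $\Sigma(\R^d)$ by approximation.

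For the compact embedding I would argue by sequential compactness. Let $\{u_n\}$ be bounded in $\Sigma(\R^d)$; by the norm equivalence just established it is bounded in $H^1(\R^d)$, so along a subsequence $u_n \rightharpoonup u$ weakly in both $\Sigma(\R^d)$ and $H^1(\R^d)$. The goal is strong convergence in $L^p$ for $2 \leq p < \frac{2d}{(d-2)^+}$. I would split $\R^d$ into a large ball $B_R$ and its complement. On $B_R$ the classical Rellich--Kondrachov theorem gives that $H^1(B_R) \hookrightarrow L^p(B_R)$ is compact for $p$ strictly below the critical exponent, so after a diagonal extraction over $R\in\N$ we obtain $u_n \to u$ strongly in $L^p(B_R)$ for every $R$.

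The tail is handled by the moment bound coming from the weight $|x|^2$. For $|x|>R$ one has the uniform smallness
\begin{equation*}
\|u_n\|_{L^2(|x|>R)}^2 \leq \frac{1}{R^2}\|xu_n\|_{L^2}^2 \leq \frac{C}{R^2},
\end{equation*}
which tends to zero uniformly in $n$ as $R\to\infty$. Interpolating between $L^2$ and the critical space $L^{2^\ast}$, with $2^\ast = \frac{2d}{(d-2)^+}$, I would write $\|u_n\|_{L^p(|x|>R)} \leq \|u_n\|_{L^2(|x|>R)}^{1-\theta}\|u_n\|_{L^{2^\ast}(|x|>R)}^{\theta}$ for the $\theta\in[0,1)$ determined by $\tfrac1p = \tfrac{1-\theta}{2}+\tfrac{\theta}{2^\ast}$. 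The factor $\|u_n\|_{L^{2^\ast}}$ is uniformly bounded by the Sobolev embedding $H^1\hookrightarrow L^{2^\ast}$, while the $L^2$ tail vanishes uniformly, so the whole $L^p$ tail is uniformly small. Combining this uniform tail smallness with local strong convergence on $B_R$ yields $\limsup_{n,m}\|u_n-u_m\|_{L^p(\R^d)} \leq \eps$ for every $\eps>0$, hence $u_n\to u$ strongly in $L^p(\R^d)$, which proves compactness.

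The delicate point is ensuring the tail interpolation is valid across the whole admissible range of $p$: it uses $p<2^\ast$ strictly, which forces $\theta<1$ and keeps the Sobolev-controlled factor at a subcritical power. The endpoint $p=2^\ast$ is genuinely excluded, since the embedding is not compact there. For $d=2$ one must recall that $2^\ast=\infty$, so the range is $2\leq p<\infty$ and the single critical Sobolev bound is replaced by the family of estimates $H^1(\R^2)\hookrightarrow L^q(\R^2)$ for finite $q$; apart from this bookkeeping, every estimate above is standard.
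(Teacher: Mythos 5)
Your proposal is correct. It follows the same basic mechanism as the paper's proof --- the tail bound $\|u\|_{L^2(|x|>R)}^2 \leq R^{-2}\|xu\|_{L^2}^2$, Rellich--Kondrachov on balls, and an interpolation to reach all subcritical $p$ --- but the two arguments are organized differently, and you prove more. The paper first establishes compactness of $\Sigma(\R^d)\hookrightarrow L^2(\R^d)$ (so the tail only ever needs to be controlled in $L^2$), and then upgrades to $L^p$ in one stroke by interpolating globally between the strongly convergent $L^2$ piece and a bounded $L^q$ piece with $p<q<\frac{2d}{(d-2)^+}$; this avoids your diagonal extraction over $R\in\N$ and the separate tail interpolation, so it is slightly more economical. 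Conversely, the paper does not prove the norm equivalence at all (it is implicitly deferred to the cited reference), whereas your uncertainty-principle computation
\begin{equation*}
d\|u\|_{L^2}^2 = -2\,\mathrm{Re}\int (x\cdot\nabla u)\,\bar u\,dx \leq 2\|xu\|_{L^2}\|\nabla u\|_{L^2}\leq \|u\|_\Sigma^2
\end{equation*}
supplies a clean, self-contained proof of that half of the statement, which is a genuine addition. One small remark: in the interpolation step, replacing the critical exponent $2^\ast$ by any finite subcritical $q>p$ (as the paper does) handles $d=2$ and $d=3$ uniformly and removes the bookkeeping you flag at the end.
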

	
\begin{proof}
 For any $R>0$, we have that
 \begin{equation*}
 \int_{|x| \geq R} |u(x)|^2\, dx \leq R^{-2} \| xu\|_{L^2}^2.
 \end{equation*}
On the other hand, we also know that $H^1(B_R)$ compactly embeds into $L^2(B_R)$, where $B_R \equiv \{x \in \R^d \, :\, |x|<R\}$. Consequently, we may conclude that the embedding $\Sigma (\R^d)\hookrightarrow L^2(\R^d)$ is compact. 
The embedding $H^1(\R^d) \hookrightarrow L^p(\R^d)$ for $p \in \left[ 2, \frac{2d}{(d-2)^+}\right)$, implies that $\Sigma (\R^d)\hookrightarrow L^p(\R^d)$. Choosing any $q>p$, such that $q < \frac{2d}{(d-2)^+}$ and interpolating between $L^2(\R^d)$ and $L^q(\R^d)$ implies that the embedding $\Sigma(\R^d) \hookrightarrow L^p(\R^d)$ is compact. 
\end{proof}	
	
\begin{prop}\label{prp:sigemb}
 The Hilbert space $\Sigma^2(\R^d)$ is compactly embedded in $\Sigma(\R^d)$.
\end{prop}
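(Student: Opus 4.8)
The plan is to mimic the tail-estimate-plus-local-compactness scheme already used in Proposition \ref{prp:comemb}. Let $\{u_n\}$ be a bounded sequence in $\Sigma^2(\R^d)$; since the $\Sigma$-norm is equivalent to the full norm induced by $\langle\cdot,\cdot\rangle_\Sigma$ (Proposition \ref{prp:comemb}), it suffices to extract a subsequence along which both $\nabla u_n$ and $xu_n$ converge strongly in $L^2(\R^d)$. I would obtain this from a uniform exterior decay of these two quantities together with local strong convergence on balls. (Alternatively, one could diagonalize the harmonic oscillator $-\Delta+|x|^2$ and reduce the statement to the compactness of the embedding between the weighted sequence spaces $\ell^2_{\lambda^2}\hookrightarrow\ell^2_\lambda$, which holds since $\lambda_k\to\infty$; I will follow the more self-contained PDE route.)

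The main point is an a priori bound showing that the $\Sigma^2$-norm controls the \emph{weighted} gradient $\||x|\nabla u\|_{L^2}$. Expanding the square and integrating by parts one obtains, for $u\in\Sigma^2$, the commutator identity
\begin{equation*}
\|(-\Delta+|x|^2)u\|_{L^2}^2 + 2d\,\|u\|_{L^2}^2 = \|\Delta u\|_{L^2}^2 + \||x|^2 u\|_{L^2}^2 + 2\,\||x|\nabla u\|_{L^2}^2 .
\end{equation*}
In particular both $\||x|\nabla u\|_{L^2}$ and $\||x|^2u\|_{L^2}$ are bounded by $\|u\|_{\Sigma^2}$. This is exactly the extra weight needed to make the exterior contribution of the two terms of the $\Sigma$-norm small: for every $R>0$, uniformly in $n$,
\begin{equation*}
\int_{|x|\geq R}|\nabla u_n|^2\,dx \leq R^{-2}\||x|\nabla u_n\|_{L^2}^2\lesssim R^{-2}, \qquad
\int_{|x|\geq R}|x|^2|u_n|^2\,dx \leq R^{-2}\||x|^2u_n\|_{L^2}^2\lesssim R^{-2}.
\end{equation*}

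Next I would handle the bounded region. Since $\Sigma^2(\R^d)\subset H^2(\R^d)$ and the identity above bounds $\|\Delta u_n\|_{L^2}$, the sequence $\{u_n\}$ is bounded in $H^2(\R^d)$, hence in $H^2(B_R)$ for each $R$. By Rellich's theorem it is relatively compact in $H^1(B_R)$, so along a subsequence $\nabla u_n\to\nabla u$ in $L^2(B_R)$, and, because $|x|\leq R$ there, also $xu_n\to xu$ in $L^2(B_R)$. A diagonal extraction over radii $R_j\to\infty$ yields a single subsequence (not relabeled) and a limit $u$ for which this local convergence holds on every ball; by weak lower semicontinuity $u\in\Sigma(\R^d)$.

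Finally I would combine the two ingredients. Given $\eps>0$, fix $R$ so large that the uniform tail bounds, together with the matching tails of the limit $u$, make the contribution of $\{|x|\geq R\}$ to $\|\nabla(u_n-u)\|_{L^2}^2+\|x(u_n-u)\|_{L^2}^2$ less than $\eps$; then send $n\to\infty$ so that the local convergence kills the contribution on $B_R$. This gives $u_n\to u$ in $\Sigma(\R^d)$ and proves the compact embedding. The only genuinely non-routine step is the commutator identity: it is what upgrades the plain $H^2$-and-weight bound into the weighted gradient control $\||x|\nabla u\|_{L^2}\lesssim\|u\|_{\Sigma^2}$ that tames the exterior gradient tail. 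Once this is available, everything else is the standard tail-plus-Rellich argument of Proposition \ref{prp:comemb}.
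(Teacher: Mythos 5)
Your argument is correct and is essentially the paper's own proof: the paper likewise observes that finiteness of $\|(-\Delta+|x|^2)u\|_{L^2}$ is equivalent to finiteness of $\|\Delta u\|_{L^2}+\||x|^2u\|_{L^2}+\||x||\nabla u|\|_{L^2}$ and then says to adapt the tail-estimate-plus-local-compactness scheme of Proposition \ref{prp:comemb}. You have merely filled in the details the paper leaves implicit, in particular the integration-by-parts identity behind that norm equivalence, which is stated correctly.
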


\begin{proof}
 Let $u\in\Sigma^2(\R^d)$ and notice that the condition 
 \begin{equation*}
 \|(- \Delta + |x|^2) u\|_{L^2}^2 < \infty, 
 \end{equation*}
 is equivalent to 
 \begin{equation*}
		 \| \Delta u\|_{L^2} + \| |x|^2 u\|_{L^2} + \big\| |x| |\nabla u| \big\|_{L^2} < \infty.
	\end{equation*}
	Thus we define the norm of $\Sigma^2(\R^d)$ as
		\begin{equation}\label{eq:normSig2}
		 \| u\|_{\Sigma^2} = \left(\| \Delta u\|_{L^2}^2 + \| |x|^2 u\|_{L^2}^2 + \big\| |x| |\nabla u| \big \|_{L^2}^2 \right)^\frac{1}{2}.
	\end{equation}
	Consequently, adapting the proof of Proposition \ref{prp:comemb}, one can prove that the embedding $\Sigma^2(\R^d) \hookrightarrow \Sigma(\R^d)$ is compact. 
\end{proof}	

Standard compactness arguments then yield the following existence result for ground states of \eqref{eq:energy_gpe}, see \cite{Se02}.

\begin{prop}\label{prp:gsGP}
Let $|\Omega| < \omega$, $g \geq 0$ and $\sigma < \frac{2}{(d-2)^+}$ or $g < 0$ and $\sigma < \frac{2}{d}$. For any $m >0$, there exists $Q_m \in \Sigma(\R^d)$ solving the following variational problem:
 \begin{equation*}
 \tau_m = \inf \{ E[u]: \, u \in \Sigma, \ \| u\|_{L^2} = m \}.
 \end{equation*}
 Moreover, if $v\in \Sigma(\R^d)$ satisfies $E[v] = \tau_m$ and $\| v\|_{L^2} = m$ then there exists $\varphi \in [0,2\pi)$ such that
 \begin{equation*}
 v = e^{\ii \varphi} Q_m.
 \end{equation*}
\end{prop}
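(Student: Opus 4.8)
The plan is to produce $Q_m$ by the direct method of the calculus of variations, exploiting the compact embeddings of Proposition \ref{prp:comemb}, and then to deduce uniqueness up to a global phase by a convexity argument. First I would record the coercivity of $E$ on the constraint. Writing the quadratic part as $\mathcal Q[u]=\int\frac12|\nabla u|^2+V|u|^2-\bar u(\Omega\cdot L)u\,dx$, the hypothesis $|\Omega|<\omega$ gives, as noted just before Theorem \ref{thm:gwp}, $\mathcal Q[u]\geq c\|u\|_\Sigma^2$ for some $c>0$; in particular $\mathcal Q$ is an equivalent squared Hilbert norm on $\Sigma$. In the defocusing case $g\geq0$ the nonlinear term is nonnegative, so $E[u]\geq c\|u\|_\Sigma^2$ is simultaneously bounded below and coercive. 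In the focusing case $g<0$ with $\sigma<\frac2d$ I would control the nonlinearity by the Gagliardo--Nirenberg inequality $\|u\|_{2\sigma+2}^{2\sigma+2}\lesssim\|\nabla u\|_{2}^{d\sigma}\|u\|_{2}^{2\sigma+2-d\sigma}$; since $d\sigma<2$ and $\|u\|_{L^2}=m$ is fixed, Young's inequality absorbs this term into $\frac{c}{2}\|\nabla u\|_2^2$ plus a constant, again yielding $\tau_m>-\infty$ and coercivity. This shows that $\tau_m$ is finite and that every minimizing sequence is bounded in $\Sigma$.

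Next, given a minimizing sequence $\{u_n\}\subset\Sigma$ with $\|u_n\|_{L^2}=m$ and $E[u_n]\to\tau_m$, boundedness in $\Sigma$ lets me extract $u_n\rightharpoonup Q_m$ weakly in $\Sigma$. By Proposition \ref{prp:comemb} the embedding $\Sigma\hookrightarrow L^p$ is compact for $p\in[2,\frac{2d}{(d-2)^+})$; since $2\sigma+2$ lies in this range, I obtain $u_n\to Q_m$ strongly in $L^2$ and in $L^{2\sigma+2}$. Strong $L^2$ convergence preserves the constraint, $\|Q_m\|_{L^2}=m$, which is precisely the point where compactness of the embedding rules out loss of mass at infinity. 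For the lower bound, $\mathcal Q$ is weakly lower semicontinuous because it is an equivalent squared norm, while the nonlinear term converges along the strong $L^{2\sigma+2}$ convergence; hence $E[Q_m]\leq\liminf_n E[u_n]=\tau_m$, and the reverse inequality is automatic, so $Q_m$ is a minimizer. Bundling the sign-indefinite rotation term into the positive-definite form $\mathcal Q$ is what allows me to avoid treating the angular-momentum density separately in the semicontinuity step.

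Finally, for the uniqueness up to phase I would argue by convexity in the density $\rho=|u|^2$. Rewriting the kinetic and rotation terms through the magnetic momentum $-\ii\nabla-A$ with $A=\Omega\wedge x$, the diamagnetic inequality $|\nabla\sqrt\rho|\le|(-\ii\nabla-A)u|$ shows that replacing two competing minimizers by a suitable interpolation of their densities cannot increase the quadratic energy, while $\rho\mapsto\rho^{\sigma+1}$ is convex for $\sigma\geq0$; strict convexity should then force the two minimizers to share the same modulus, after which equality in the diamagnetic inequality pins the phase down to a single global constant.

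I expect this last step to be the main obstacle. The rotation term is sign-indefinite and does not survive the modulus reduction as transparently as in the non-rotating case, so the magnetic reformulation must be handled carefully; moreover, in the focusing regime $g<0$ the nonlinear contribution is concave rather than convex, and the convexity scheme must therefore be run on the full functional rather than term by term. This is exactly the delicate point treated in \cite{Se02, ArNeSp19}, and I would follow their argument for this part of the statement.
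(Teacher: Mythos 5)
The paper does not actually prove this proposition: it states that ``standard compactness arguments'' yield the result and cites \cite{Se02} (and, elsewhere, \cite{ArNeSp19}). Your existence argument is precisely that standard argument, correctly executed: coercivity of the quadratic form $(H_\Omega u,u)$ under $|\Omega|<\omega$ (which the paper records in \eqref{eq:sigmaEn} and Remark \ref{rem:norms}), Gagliardo--Nirenberg absorption of the focusing term when $d\sigma<2$, extraction of a weak limit, strong convergence in $L^2$ and $L^{2\sigma+2}$ via the compact embedding of Proposition \ref{prp:comemb} to preserve the mass constraint and pass to the limit in the nonlinearity, and weak lower semicontinuity of the equivalent squared Hilbert norm. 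This is exactly the route the paper intends, so on the existence half there is nothing to add.

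The uniqueness-up-to-phase half is where your sketch, taken literally, does not close, and you are right to flag it as the main obstacle. The diamagnetic/convexity-in-$\rho$ scheme hinges on the energy of $|u|$ not exceeding that of $u$; but for $\Omega\neq 0$ the rotation term $-\int\bar u(\Omega\cdot L)u\,dx$ is annihilated by passing to the modulus (it vanishes on real-valued functions), and since the minimizers are genuinely complex-valued with nontrivial phase circulation (as the paper itself remarks after the proposition), replacing $u$ by $\sqrt{\rho}$ strictly changes the competitor rather than merely lowering the kinetic term. The magnetic completion $-\ii\nabla - A$ with $A=\Omega\wedge x$ does not repair this, because the compensating term $\frac12|A|^2|u|^2$ is not present in $E$; one would be comparing $E$ with a different functional. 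In the focusing case the additional concavity of $\rho\mapsto\rho^{\sigma+1}$ compounds the problem. So the convexity argument proves uniqueness only for $\Omega=0$, $g\geq0$. For the rotating case the statement must be taken from \cite{Se02, ArNeSp19}, which is what both you and the paper ultimately do; just be aware that your proposed mechanism is not a viable alternative proof of that part, and the honest formulation is that the uniqueness clause is imported from the literature rather than derived.
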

For $\Omega = 0$, there exists a unique real-valued and strictly positive minimizer, which is usually called the ground state. Observe that when $\Omega \neq 0$, all the minimizers are complex-valued for the presence of the rotational operator $L$. Consequently, we refer to all the minimizers of the problem above as ground states. In general, the properties of stationary states, such as radial symmetry breaking can be found in \cite{Se02}, \cite{Se03}.

\subsection{Properties of the semigroup}

We begin with the presentation of the smoothing estimates of the linear Hamiltonian operator
\begin{equation*}
	H_\Omega =	- \frac{1}{2} \Delta + V - \Omega \cdot L,
\end{equation*}
where $V$ is defined in \eqref{eq:V}.
 We denote by 
\begin{equation}\label{eq:propagGP}
U_\Omega (t) = e^{-\frac{i+\gamma}{1+\gamma^2} tH_\Omega}
\end{equation} 
the semi-group associated with $H_\Omega$ on $L^2 (\R^d)$, so that, given $u_0 \in \Sigma (\R^d)$, the function $U_\Omega (t) u_0$ solves the linear problem
\begin{equation} \label{eq:linGPeq}
\begin{cases}
 \partial_t u = -\frac{i+\gamma}{1+\gamma^2} H_\Omega u, \ \mbox{in} \ (0,\infty) \times \R^d, \\
 u(0) = u_0.
\end{cases}
\end{equation}
We will use the following commutators estimates.

\begin{prop}\label{prp:comm}
We have the following properties:
\begin{equation}\label{eq:commEst}
	[\nabla,H_\Omega] = \nabla V + \ii \nabla \wedge \Omega, \quad 	[x,H_\Omega] = \nabla - \ii \Omega\wedge x.
\end{equation}
\end{prop}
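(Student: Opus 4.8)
The plan is to verify both identities by a direct computation, interpreting them as operator identities acting on a dense set of smooth functions (say $\Sigma^2(\R^d)$ or the Schwartz class) and then extending by density, after checking that both sides map $\Sigma^2(\R^d)$ continuously into $L^2(\R^d)$. I would begin by using the bilinearity of the commutator together with the decomposition $H_\Omega = -\frac{1}{2}\Delta + V - \Omega\cdot L$, so that each bracket splits into three contributions, one per term, and recording the canonical relations $[\partial_i, x_j]=\delta_{ij}$ and $[x_i,\partial_j]=-\delta_{ij}$.

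For the first identity I would note that $\nabla$ has constant coefficients and hence commutes with $\Delta$, so $[\nabla, -\frac{1}{2}\Delta]=0$. The potential term is handled by the Leibniz rule: for any smooth $f$ one has $\nabla(Vf) = (\nabla V)f + V\nabla f$, whence $[\nabla, V] = \nabla V$ as a multiplication operator. The only nontrivial contribution comes from the rotation term; writing $\Omega\cdot L = -\ii\,\Omega_k\epsilon_{klm}x_l\partial_m$ in components, I would compute $[\partial_i, \Omega\cdot(x\wedge\nabla)] = \Omega_k\epsilon_{klm}\delta_{il}\partial_m = \Omega_k\epsilon_{kim}\partial_m = -(\Omega\wedge\nabla)_i$. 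Since $-\Omega\cdot L = \ii\,\Omega\cdot(x\wedge\nabla)$, this gives $[\nabla, -\Omega\cdot L] = -\ii\,\Omega\wedge\nabla = \ii\,\nabla\wedge\Omega$. Summing the three pieces yields $[\nabla, H_\Omega] = \nabla V + \ii\,\nabla\wedge\Omega$.

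For the second identity I would proceed analogously. Since $x_i$ is a multiplication operator it commutes with $V$, so $[x, V]=0$; the kinetic term gives $[x, -\frac{1}{2}\Delta] = \nabla$, which follows from $[x_i, \Delta] = -2\partial_i$ (a consequence of applying $[x_i,\partial_j]=-\delta_{ij}$ twice via the Leibniz rule). For the rotation term, again in components, $[x_i, \Omega\cdot(x\wedge\nabla)] = \Omega_k\epsilon_{klm}x_l[x_i,\partial_m] = -\Omega_k\epsilon_{kli}x_l = -(\Omega\wedge x)_i$, so that $[x, -\Omega\cdot L] = \ii\bigl(-(\Omega\wedge x)\bigr) = -\ii\,\Omega\wedge x$. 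Adding the three contributions gives $[x, H_\Omega] = \nabla - \ii\,\Omega\wedge x$.

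The computation is entirely elementary and I do not anticipate any analytical obstacle, since everything reduces to the canonical commutation relations and a routine density argument. The only point that genuinely requires care is the consistent bookkeeping of the Levi-Civita symbol together with the orientation conventions for the cross product: this is what produces the correct signs and, in particular, the passage between $\Omega\wedge\nabla$ and $\nabla\wedge\Omega$ in the first identity. I would therefore keep explicit track of the index contractions (for instance using the cyclic identity $\epsilon_{kli}=\epsilon_{ikl}$) to avoid sign errors in the rotational contributions.
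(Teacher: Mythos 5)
Your computation is correct and follows essentially the same route as the paper: both split $H_\Omega$ into its three terms and evaluate each commutator directly, the only cosmetic difference being that the paper shortcuts the rotational contribution via the scalar triple product identity $\Omega\cdot(x\wedge\nabla)=x\cdot(\nabla\wedge\Omega)$ whereas you track the Levi--Civita indices explicitly. Your sign bookkeeping in the rotational terms checks out, so no changes are needed.
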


\begin{proof}
 We compute explicitly the commutator $	\left[\nabla,H_\Omega\right]$ as 
 \begin{equation*}
 \begin{aligned}
 \left[\nabla,H_\Omega\right] &= -\frac{1}{2} \left[\nabla,\Delta\right] + \left[\nabla,V\right] + \ii \left[\nabla,\Omega \cdot (x\wedge \nabla) \right] \\ 
 &= \nabla V + \ii \left[\nabla,x\cdot (\nabla \wedge \Omega) \right] \\
 & = \nabla V + \ii \nabla \wedge \Omega,
 \end{aligned}
 \end{equation*}
 where we used the property $a \cdot(b \wedge c) = det(a,b,c) = (a \wedge b) \cdot c$ for three dimensional vectors $a,b,c$. Similar calculations yield 
 \begin{equation*}
 [x,H_\Omega] = \nabla - \ii \Omega\wedge x.
 \end{equation*}
\end{proof}

We recall the dispersive estimates of this semi-group.

\begin{prop}[Lemma 2.3 in \cite{ChMaSp15}]
There exists a $t^* >0$ such that, for all $1 \leq q \leq p \leq \infty$ and $t \in [0,t^*)$,
	\begin{equation}\label{eq:disp1}
		\| U_\Omega (t) f\|_{L^p} \lesssim t^{\frac{d}{2} \left( \frac{1}{p} - \frac{1}{q} \right)} \| f\|_{L^q},
	\end{equation}
and 
\begin{equation}\label{eq:disp2}
	\|\nabla U_\Omega (t) f\|_{L^p} + \|x U_\Omega (t) f\|_{L^p} \lesssim t^{\frac{d}{2} \left( \frac{1}{p} - \frac{1}{q} \right) - \frac{1}{2}} \| f\|_{L^q}.
\end{equation}
\end{prop}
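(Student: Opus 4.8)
The plan is to establish the endpoint bound $L^1\to L^\infty$ and then interpolate, exploiting the fact that $H_\Omega$ is a quadratic operator, so that its semigroup has an explicit Gaussian kernel. First I would observe that, since $V$ in \eqref{eq:V} is a quadratic form in $x$ and $\Omega\cdot L=-\ii\,\Omega\cdot(x\wedge\nabla)$ is bilinear in $x$ and $\nabla$, the operator $H_\Omega=-\frac12\Delta+V-\Omega\cdot L$ is a (complex) quadratic Hamiltonian. Writing $z(t)=\frac{\ii+\gamma}{1+\gamma^2}t$, so that $\mathrm{Re}\,z(t)=\frac{\gamma}{1+\gamma^2}t>0$ for $t>0$, the propagator $U_\Omega(t)=e^{-z(t)H_\Omega}$ defined in \eqref{eq:propagGP} admits a generalized Mehler representation with a Gaussian integral kernel
\begin{equation*}
K_t(x,y)=a(t)\exp\!\Big(-\tfrac12\big\langle (x,y),\,B(t)(x,y)\big\rangle\Big),
\end{equation*}
where the amplitude $a(t)$ and the complex symmetric matrix $B(t)$ are obtained by inserting this ansatz into \eqref{eq:linGPeq} and solving the resulting Riccati-type system. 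This is legitimate on a small interval $[0,t^*)$ on which $B(t)$ develops no caustics; the parameter $t^*$ in the statement is exactly this threshold.

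The key point is to read off the size of $K_t$ for small $t$. The leading singular behaviour of $a(t)$ is governed by the free part $-\frac12\Delta$, giving $|a(t)|\lesssim t^{-d/2}$; crucially, because $\mathrm{Re}\,z(t)>0$, the real part of the quadratic form $\langle(x,y),B(t)(x,y)\rangle$ is positive definite, so the exponential is a genuinely decaying Gaussian rather than a purely oscillatory factor. Hence
\begin{equation*}
\sup_{x,y}|K_t(x,y)|\lesssim t^{-d/2},\qquad
\sup_x\int|K_t(x,y)|\,dy+\sup_y\int|K_t(x,y)|\,dx\lesssim 1,
\end{equation*}
the second bound following from the normalisation of a Gaussian of width $\sim t^{1/2}$. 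These translate into $\|U_\Omega(t)\|_{L^1\to L^\infty}\lesssim t^{-d/2}$ together with $\|U_\Omega(t)\|_{L^1\to L^1}+\|U_\Omega(t)\|_{L^\infty\to L^\infty}\lesssim 1$, and by the spectral theorem (as $H_\Omega$ is self-adjoint and bounded below while $\mathrm{Re}\,z(t)>0$) also $\|U_\Omega(t)\|_{L^2\to L^2}\lesssim 1$.

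Estimate \eqref{eq:disp1} then follows by Riesz--Thorin interpolation: for $1\le q\le p\le\infty$ one interpolates the uniform $L^r\to L^r$ bound against the $L^1\to L^\infty$ bound with parameter $\theta=\frac1q-\frac1p\in[0,1]$, which produces the exponent $(-d/2)\theta=\frac d2\big(\frac1p-\frac1q\big)$. For \eqref{eq:disp2} I would differentiate the kernel: $\partial_{x_j}K_t$ and $x_jK_t$ are again Gaussians multiplied by polynomials in $(x,y)$, and the scaling of $B(t)$ (width $\sim t^{1/2}$) shows that each factor $\nabla_x$ or $x$ costs a power $t^{-1/2}$, so that $\sup_{x,y}\big(|\nabla_xK_t|+|xK_t|\big)\lesssim t^{-d/2-1/2}$ with the analogous $\lesssim t^{-1/2}$ bounds on the $L^1$ and $L^\infty$ operator norms. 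The same interpolation gives the exponent $\frac d2\big(\frac1p-\frac1q\big)-\frac12$. Alternatively, the commutator identities of Proposition \ref{prp:comm} let one move $\nabla$ and $x$ past $U_\Omega(t)$ up to lower-order terms, reducing \eqref{eq:disp2} to \eqref{eq:disp1}.

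The main obstacle is the Mehler computation itself: the anisotropy of $V$ (distinct $\omega_j$) together with the rotation $\Omega\cdot L$ makes the quadratic Hamiltonian non-diagonal and couples positions to momenta, so the kernel is not the textbook isotropic one. The technical heart is to verify that the Riccati matrix $B(t)$ stays nondegenerate on some $[0,t^*)$ and, more importantly, that $\mathrm{Re}\,\langle(x,y),B(t)(x,y)\rangle$ remains coercive for small $t$ --- this coercivity, inherited from $\mathrm{Re}\,z(t)>0$, is what upgrades the oscillatory Schr\"odinger kernel to an honestly integrable Gaussian and underlies every bound above.
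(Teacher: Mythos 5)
The paper does not actually prove this proposition: it is quoted verbatim from Lemma 2.3 of \cite{ChMaSp15}, and the authors only remark that the proof there rests on the Mehler formula together with the change of variables of \cite{AnMaSp10}, which passes to a rotating frame and turns $H_\Omega$ into a rotation-free operator with a time-dependent harmonic potential. Your sketch lives in the same circle of ideas --- a generalized Mehler/Gaussian kernel with amplitude of size $t^{-d/2}$, Schur-type $L^1$ and $L^\infty$ bounds, and Riesz--Thorin interpolation with parameter $\theta=\frac1q-\frac1p$, which does yield exactly the exponents in \eqref{eq:disp1} and \eqref{eq:disp2} --- but you propose to treat the full coupled quadratic Hamiltonian directly through a Riccati system, rather than first conjugating away the rotation. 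Both routes work; the rotating-frame reduction is what makes the computation tractable in the reference, since it reduces everything to a classical Mehler kernel (now with time-dependent frequencies) and spares you from verifying nondegeneracy and coercivity of the full complex symmetric matrix $B(t)$ by hand, which is the step you correctly identify as the technical heart and which your sketch does not actually carry out. One small imprecision: since $\mathrm{Re}\,(1/z(t))=\gamma/t$, the real part of the Mehler quadratic form is, to leading order, $\frac{\gamma}{t}|x-y|^2$ plus $O(t)\,(|x|^2+|y|^2)$ corrections --- coercive in $x-y$ at scale $t^{1/2}$ but degenerate along the diagonal $x=y$ --- so it is not positive definite in $(x,y)$ as you claim; this weaker statement is, however, exactly what the bounds $\sup_x\int|K_t(x,y)|\,dy\lesssim1$ and $\sup_{x,y}|K_t|\lesssim t^{-d/2}$ require, so the conclusion stands. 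Your closing alternative for \eqref{eq:disp2}, namely using the commutator identities of Proposition \ref{prp:comm} to move $\nabla$ and $x$ past $U_\Omega(t)$, is also consistent with how the paper itself handles first-order weights elsewhere (e.g.\ in the proof of Proposition \ref{prp:contr}).
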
 

The proof is based on the Mehler formula \cite{Ca09} and a change of variables introduced in \cite{AnMaSp10} which transforms the linear operator $H_\Omega$ into a different linear operator without rotation but with a time-dependent harmonic potential, see \cite[Lemma 2.3]{ChMaSp15} for more details. 
\newline
We notice the following property of the semigroup.

\begin{prop}
 Let $u_0 \in \Sigma (\R^d)$ and let $U_\Omega (t) u_0 \in C\big([0,\infty), \Sigma (\R^d) \big)$ be the corresponding solution to \eqref{eq:linGPeq}. Then for any $t \in (0,\infty)$ we have
 \begin{equation}\label{eq:GPlinL2}
 \big\| U_\Omega (t) u_0 \big\|_{L^2}^2 = \big\| u_0 \big\|_{L^2}^2 - \frac{2 \gamma}{1 + \gamma^2} \int_0^t \big( H_\Omega (U_\Omega (\tau) u_0),U_\Omega (\tau) u_0 \big) \, d\tau.
 \end{equation}
 Moreover there exists $C >0$ such that
 \begin{equation}\label{eq:GPlinL3}
 \big\| U_\Omega (t) u_0 \big\|_{L^2}^2 \geq \big\| u_0 \big\|_{L^2}^2 - C t \| u_0\|_{L^\infty([0,t],\Sigma)}^2.
 \end{equation}
\end{prop}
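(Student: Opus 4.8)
The plan is to read \eqref{eq:GPlinL2} as an $L^2$ energy balance for the linear flow \eqref{eq:linGPeq} and then obtain \eqref{eq:GPlinL3} by a crude estimate of the dissipation term. First I would record the elementary identity $\frac{1}{\ii-\gamma}=-\frac{\ii+\gamma}{1+\gamma^2}$, so that the scalar coefficient in \eqref{eq:linGPeq} has real part $-\frac{\gamma}{1+\gamma^2}$. Writing $u(\tau)=U_\Omega(\tau)u_0$ and differentiating the squared norm, I would compute
\[
\frac{d}{d\tau}\|u(\tau)\|_{L^2}^2
=2(\partial_\tau u,u)
=2\,\mathrm{Re}\!\left(\tfrac{1}{\ii-\gamma}\right)\!\big(H_\Omega u,u\big)
=-\frac{2\gamma}{1+\gamma^2}\big(H_\Omega u,u\big),
\]
the crucial point being that $H_\Omega$ is self-adjoint, so $\int (H_\Omega u)\bar u\,dx$ is real and only the real part of the complex coefficient contributes. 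Integrating this identity over $[0,t]$ yields \eqref{eq:GPlinL2}.

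To make the differentiation rigorous I would exploit that, since $\gamma>0$, the generator $\frac{\ii+\gamma}{1+\gamma^2}H_\Omega$ is sectorial and $U_\Omega$ is an analytic semigroup, consistently with the smoothing estimates \eqref{eq:disp1}--\eqref{eq:disp2}. Hence for every $\tau>0$ one has $u(\tau)\in\Sigma^2=D(H_\Omega)$, the map $\tau\mapsto u(\tau)$ is $C^1$ into $L^2$ on $(0,\infty)$, and the computation above is classical there. The integrand $\tau\mapsto(H_\Omega u(\tau),u(\tau))$ is moreover continuous up to $\tau=0$: the quadratic form
\[
(H_\Omega v,v)=\tfrac12\|\nabla v\|_{L^2}^2+\int V|v|^2\,dx-\int\bar v\,(\Omega\cdot L)v\,dx
\]
is bounded on $\Sigma$, since the rotation term is controlled by $\|v\|_\Sigma^2$ as recorded in the introduction, and $u\in C([0,\infty),\Sigma)$. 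This legitimates integrating from $0$. Alternatively, I would carry out the same computation for $u_0\in\Sigma^2$ and extend to arbitrary $u_0\in\Sigma$ by density, noting that both sides of \eqref{eq:GPlinL2} are continuous in the $\Sigma$-topology (the left side trivially, the right side because the form is $\Sigma$-bounded and $U_\Omega(\cdot)u_0^{(n)}\to U_\Omega(\cdot)u_0$ uniformly on $[0,t]$ in $\Sigma$).

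For the lower bound \eqref{eq:GPlinL3} I would simply estimate the dissipation term from above. From the $\Sigma$-boundedness of the form there is $C>0$ with $(H_\Omega v,v)\le C\|v\|_\Sigma^2$ for all $v\in\Sigma$, whence, using $\frac{2\gamma}{1+\gamma^2}\le1$,
\[
\frac{2\gamma}{1+\gamma^2}\int_0^t\big(H_\Omega u(\tau),u(\tau)\big)\,d\tau
\le C\int_0^t\|u(\tau)\|_\Sigma^2\,d\tau
\le Ct\,\|u_0\|_{L^\infty([0,t],\Sigma)}^2,
\]
and inserting this into \eqref{eq:GPlinL2} gives \eqref{eq:GPlinL3}. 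The main obstacle lies entirely in the second paragraph: justifying the time differentiation for data only in $\Sigma$ rather than in $\Sigma^2$, which is precisely where I rely on the analytic smoothing of $U_\Omega$ together with the $\Sigma$-boundedness of the Hamiltonian quadratic form. Once this is settled, both identities follow at once.
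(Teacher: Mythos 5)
Your proof is correct and follows essentially the same route as the paper: both derive $\frac{d}{dt}\|u\|_{L^2}^2=-\frac{2\gamma}{1+\gamma^2}\,(H_\Omega u,u)$ (the paper via two scalar products, with $u$ and with $\ii u$, you directly via the real part of $\frac{1}{\ii-\gamma}$ together with the self-adjointness of $H_\Omega$) and then integrate in time. The only divergence is in \eqref{eq:GPlinL3}, where the paper invokes the commutator and dispersive estimates while you bound the dissipation term directly by the $\Sigma$-boundedness of the quadratic form, which is simpler and matches the stated right-hand side; your discussion of differentiability for data merely in $\Sigma$ supplies more rigor than the paper's own argument.
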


\begin{proof}
 We rewrite system \eqref{eq:linGPeq} as 
 \begin{equation}\label{eq:linGPeq1}
 \begin{cases}
 (\ii - \gamma) \partial_t u = H_\Omega u, \\
 u(0) = u_0 \in \Sigma(\R^d).
 \end{cases}
 \end{equation}
 By taking the scalar product of \eqref{eq:linGPeq1} with $u$, we get
 \begin{equation}\label{eq:linGP1}
 (\ii \partial_t u, u) - \frac{\gamma}{2} \frac{d}{dt} \| u(t) \|_{L^2}^2 = \big(H_\Omega u, u \big). 
 \end{equation}
 Moreover, by taking the scalar product of \eqref{eq:linGPeq1} with $\ii u$ we also have that
 \begin{equation*}
 \frac{1}{2} \frac{d}{dt} \| u(t) \|_{L^2}^2 - \gamma (\partial_t u, \ii u) = (H_\Omega u, \ii u) = 0,
 \end{equation*}
 that is 
 \begin{equation}\label{eq:linGP2}
 (\ii \partial_t u, u) = - \frac{1}{2 \gamma} \frac{d}{dt} \| u(t) \|_{L^2}^2.
 \end{equation}
 Plugging \eqref{eq:linGP2} inside \eqref{eq:linGP1} leads to obtain that
 \begin{equation*}
 \frac{d}{dt} \| u(t) \|_{L^2}^2 = - \frac{2 \gamma}{1 + \gamma^2} \big( H_\Omega u, u \big). 
 \end{equation*}
 Equation \eqref{eq:GPlinL2} follows from this equality after integration in time. Furthermore, \eqref{eq:GPlinL3} follows from \eqref{eq:GPlinL2} after employing the commutator estimates \eqref{eq:commEst} and the dispersive estimate \eqref{eq:disp1}.
\end{proof}

We proceed with the following definition. 

\begin{defin}\label{def:adm}
	 A pair $(q, r)$ is called admissible if $q \geq 2$, $(q,r,d) \neq (2,\infty,2)$, and 
	 \begin{equation}\label{eq:qrAdm}
	 	\frac{2}{q} = d \left( \frac{1}{2} - \frac{1}{r} \right). 
	 \end{equation}
\end{defin}

Having the dispersive estimates \eqref{eq:disp1} and \eqref{eq:disp2}, it is possible to adapt the standard arguments to prove Strichartz-type estimates for the linear propagator $U_\Omega(t)$, see for instance Proposition 2.12 in \cite{AnMiSc18} where similar Strichartz-type estimates are proven in the case without potential, without rotation.

\begin{prop}\label{prp:strich}
	Let $(q,r)$ and $(s,p)$ be two Strichartz admissible pairs and let $T>0$. Then we have
		\begin{equation*}
		\| U_\Omega(t) \varphi\|_{L^q([0,T],L^r)} \lesssim \| \varphi\|_{L^2}.
	\end{equation*}
 Moreover, if 
	\begin{equation*}
		F(\varphi) = \int_0^t U_\Omega(t-\tau) \varphi(\tau,x) d\tau,
	\end{equation*}
then we also have that
	\begin{equation*}
		\| F(\varphi)\|_{L^q([0,T],L^{r})} \lesssim\|\varphi\|_{L^{s'}([0,T],L^{p'})}
	\end{equation*}	
and
 \begin{equation*}
 	\|\nabla F(\varphi)\|_{L^q([0,T],L^r)} + \|x F(\varphi)\|_{L^q([0,T],L^r)} \lesssim \| \nabla \varphi\|_{L^{s'}([0,T],L^{p'})} + \| x\varphi\|_{L^{s'}([0,T],L^{p'})}.
 \end{equation*}	
\end{prop}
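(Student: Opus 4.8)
The plan is to adapt the standard $TT^\ast$ / Hardy--Littlewood--Sobolev machinery (as in Proposition 2.12 of \cite{AnMiSc18}), feeding it the two available inputs: the single-time dispersive decay \eqref{eq:disp1} and the uniform $L^2$-boundedness of the semigroup on bounded time intervals, the latter coming from the dissipative structure recorded in \eqref{eq:GPlinL2} together with the spectral theorem. Since \eqref{eq:disp1} only holds on the short window $[0,t^\ast)$, I would first establish all three estimates on a single interval of length $t^\ast$ and then patch finitely many such windows to cover an arbitrary $[0,T]$; this is why the implicit constants are permitted to depend on $T$ (and on $\Omega$).

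For the homogeneous bound I would run the $TT^\ast$ argument: setting $(T\varphi)(t)=U_\Omega(t)\varphi$, the estimate is equivalent to the boundedness of $G\mapsto\int U_\Omega(t)U_\Omega(s)^\ast G(s)\,ds$ from $L^{q'}_tL^{r'}_x$ to $L^q_tL^r_x$. Writing $U_\Omega(t)=e^{-\frac{\ii+\gamma}{1+\gamma^2}tH_\Omega}$ and using self-adjointness of $H_\Omega$, the composed kernel $U_\Omega(t)U_\Omega(s)^\ast$ equals the semigroup of $H_\Omega$ evaluated at the complex time $z=\frac{\gamma(t+s)+\ii(t-s)}{1+\gamma^2}$, whose imaginary part is proportional to $t-s$ and whose real part is nonnegative. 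The very Mehler-formula computation behind \eqref{eq:disp1} then yields $\|U_\Omega(t)U_\Omega(s)^\ast\|_{L^{r'}\to L^r}\lesssim|t-s|^{-d(1/2-1/r)}$, the dissipative real part only improving the bound. Because $d(1/2-1/r)=2/q$ by \eqref{eq:qrAdm}, HLS closes the argument for $q>2$, while the excluded triple $(q,r,d)=(2,\infty,2)$ together with the Keel--Tao endpoint argument disposes of $q=2$; duality gives the homogeneous estimate on $[0,t^\ast)$.

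For the inhomogeneous estimate I would not go through $TT^\ast$, because in the dissipative setting $U_\Omega(t)U_\Omega(s)^\ast\neq U_\Omega(t-s)$, so the $TT^\ast$ operator is not the Duhamel operator $F$. Instead I would estimate $F$ directly: by Minkowski and \eqref{eq:disp1}, $\|F(\varphi)(t)\|_{L^r}\lesssim\int_0^t|t-\tau|^{-\frac d2(1/p'-1/r)}\|\varphi(\tau)\|_{L^{p'}}\,d\tau$ on the short window, and the one-dimensional HLS inequality in time closes it for non-endpoint pairs once one checks, using \eqref{eq:qrAdm} for both pairs, that the exponents match, the double endpoint again falling under the Keel--Tao inhomogeneous estimate. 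The gradient/weight estimate I would obtain by commuting $\nabla$ and $x$ through $F$ via \eqref{eq:commEst}: since $[\nabla,H_\Omega]=\nabla V+\ii\nabla\wedge\Omega$ regenerates an $x$-weight while $[x,H_\Omega]=\nabla-\ii\Omega\wedge x$ regenerates a gradient, the quantities $\nabla F(\varphi)$ and $xF(\varphi)$ do not decouple; I would therefore close them simultaneously as a coupled system, bounding the main terms by $F(\nabla\varphi)$ and $F(x\varphi)$ through the scalar estimate just proved and absorbing the lower-order commutator contributions by a Gronwall/bootstrap argument. Finally, patching over consecutive windows of length $<t^\ast$, using $\|U_\Omega(t_{j})\varphi\|_{L^2}\lesssim_T\|\varphi\|_{L^2}$ to reinitialise and the finite-time $L^p$-boundedness of the propagator to control the non-singular part of the kernel, upgrades every estimate to arbitrary $T$.

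I expect the main obstacle to be the dispersive bound for the composed, non-self-adjoint operator $U_\Omega(t)U_\Omega(s)^\ast$: unlike the conservative case it is not a function of $t-s$ alone, so the transfer of \eqref{eq:disp1} to the $TT^\ast$ kernel has to be argued through the complex-time Mehler representation, verifying that the nonnegative real part of $z$ does not destroy --- and indeed only sharpens --- the $|t-s|^{-d(1/2-1/r)}$ decay within the admissible window. A secondary difficulty is keeping the coupled gradient/weight commutator system from losing regularity, which is handled by the simultaneous bootstrap above.
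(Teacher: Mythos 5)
The paper gives no proof of this proposition: it simply asserts that, given the dispersive bounds \eqref{eq:disp1}--\eqref{eq:disp2}, ``the standard arguments'' adapt, pointing to Proposition 2.12 of \cite{AnMiSc18}. Your sketch is a correct and essentially complete realization of exactly that standard route, and it rightly isolates the two points where the dissipative setting genuinely requires care --- the $TT^\ast$ kernel being a complex-time semigroup (not a function of $t-s$ alone, though its nonnegative real part only sharpens the $|t-s|^{-d(1/2-1/r)}$ decay) and the coupled $\nabla$/$x$ commutator system, which is precisely how the paper itself handles the weighted terms later when estimating $\nabla\mathcal H(u)$ and $x\mathcal H(u)$.
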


\subsection{Spectral properties of the linear Hamiltonian}

For $\Omega = 0$, the Hamiltonian defined in \eqref{eq:lin_opGP} is the anisotropic quantum mechanical oscillator 
\begin{equation*}
 H_0 = \frac{1}{2} \left( - \Delta + \sum_{j=1}^d \omega_j^2 x_j^2 \right).
\end{equation*}
The spectral properties of this operator are well-known, see for example \cite{Te09}.
\begin{prop}
 $H_0$ is essentially self-adjoint on $C_0^\infty (\R^d)\subset L^2(\R^d)$ with compact resolvent. If for all $j$, $\omega_j = \omega$, then the spectrum $\sigma(H_0) = \{	\lambda_{0,n}\}$ is given by 
	\begin{equation*}
		\lambda_{0,n} = \omega \left(\frac{d}{2} + n - 1 \right),
	\end{equation*}
and the eigenvalue $	\lambda_{0,n}$ is 
\begin{equation*}
 \left( \begin{matrix} d + n -2 \\ n-1
 \end{matrix}
 \right) \mbox{-fold degenerate.}
\end{equation*}
\end{prop}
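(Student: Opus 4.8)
The plan is to treat the three assertions separately, relying on the standard theory of Schrödinger operators with confining potentials together with a separation of variables. I emphasize that the first claim (essential self-adjointness and compactness of the resolvent) holds for arbitrary frequencies $\omega_j>0$, whereas the explicit spectrum and the degeneracy count require the isotropic hypothesis $\omega_j\equiv\omega$.

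For essential self-adjointness, I would first note that $H_0$ is symmetric and bounded below on $C_0^\infty(\R^d)$, since $V\geq 0$. The potential $V$ is smooth and satisfies $V(x)\to+\infty$ as $|x|\to\infty$, so $H_0$ falls under the classical criteria (the Faris--Lavine theorem, or the results collected in \cite{Te09}) that guarantee $C_0^\infty(\R^d)$ to be a core; its self-adjoint closure then has domain $\Sigma^2(\R^d)$. For the compactness of the resolvent, I would use that the form domain of $H_0$ is exactly $\Sigma(\R^d)$, with form norm equivalent to $\|\cdot\|_\Sigma$. Since $\Sigma(\R^d)\hookrightarrow L^2(\R^d)$ is compact by Proposition \ref{prp:comemb}, the resolvent $(H_0+1)^{-1}$ factors through this compact embedding and is therefore compact. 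Consequently $\sigma(H_0)$ is purely discrete, consisting of eigenvalues of finite multiplicity accumulating only at $+\infty$.

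For the explicit spectrum in the isotropic case, the key observation is that $H_0$ separates: writing $h=-\frac{1}{2}\frac{d^2}{dx^2}+\frac{1}{2}\omega^2 x^2$ for the one-dimensional oscillator, one has $H_0=\sum_{j=1}^d h^{(j)}$, where $h^{(j)}$ acts in the $j$-th coordinate. The one-dimensional spectrum is classical: using the ladder operators $a^{\pm}=\frac{1}{\sqrt{2\omega}}\bigl(\omega x\mp\frac{d}{dx}\bigr)$, for which $h=\omega\bigl(a^+a^-+\tfrac12\bigr)$, one finds that $h$ has the simple eigenvalues $\omega\bigl(k+\tfrac12\bigr)$ for every integer $k\geq 0$, with the Hermite functions $\{\varphi_k\}_{k\geq0}$ as eigenfunctions. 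Since $\{\varphi_k\}_{k\geq 0}$ is a complete orthonormal basis of $L^2(\R)$, the tensor products $\varphi_{k_1}\otimes\cdots\otimes\varphi_{k_d}$ form a complete orthonormal basis of $L^2(\R^d)$, each being an eigenfunction of $H_0$ with eigenvalue $\omega\bigl(\sum_j k_j+\tfrac{d}{2}\bigr)$. Completeness guarantees that these exhaust the point spectrum, so the eigenvalues are precisely $\omega\bigl(\tfrac{d}{2}+N\bigr)$ with $N=\sum_j k_j$ ranging over the nonnegative integers; relabelling $N=n-1$ recovers the stated formula $\lambda_{0,n}=\omega\bigl(\tfrac{d}{2}+n-1\bigr)$.

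Finally, the multiplicity of $\lambda_{0,n}$ equals the number of multi-indices $(k_1,\dots,k_d)$ of nonnegative integers with $k_1+\cdots+k_d=n-1$, again because completeness of the Hermite basis rules out any further eigenfunctions. A standard stars-and-bars count gives $\binom{(n-1)+(d-1)}{d-1}=\binom{n+d-2}{n-1}$, which is exactly the claimed degeneracy. I expect the only genuinely delicate point to be the justification that the tensor-product eigenfunctions account for the \emph{entire} eigenspace, with no hidden multiplicity or additional spectrum; this is settled by the completeness of the Hermite basis in $L^2(\R^d)$ rather than by any quantitative estimate, so the argument is essentially structural once separation of variables is in place.
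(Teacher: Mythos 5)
Your proposal is correct, but note that the paper does not actually prove this proposition: it simply records it as a well-known fact and cites \cite{Te09}, so there is no internal argument to compare against. What you have written is the standard textbook proof that such a reference would contain, and it checks out in all details: essential self-adjointness of $-\tfrac12\Delta + V$ with $V\geq 0$ smooth and confining follows from the classical criteria you invoke; the compact-resolvent argument is sound, since $(H_0+1)^{-1}$ maps $L^2$ boundedly into the form domain $\Sigma(\R^d)$ (using $\|u\|_\Sigma^2 \lesssim \langle (H_0+1)u,u\rangle \leq \|(H_0+1)u\|_{L^2}\|u\|_{L^2}$) and then factors through the compact embedding of Proposition \ref{prp:comemb}; the ladder-operator algebra $h=\omega\bigl(a^+a^-+\tfrac12\bigr)$ is computed correctly; and your stars-and-bars count $\binom{(n-1)+(d-1)}{d-1}=\binom{d+n-2}{n-1}$ matches the stated degeneracy. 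You also correctly isolate the one genuinely delicate point, namely that completeness of the tensor Hermite basis is what rules out extra spectrum and hidden multiplicity: if $H_0u=\lambda u$, expanding $u$ in the basis and pairing with each basis vector forces all coefficients with eigenvalue different from $\lambda$ to vanish, so the eigenspaces are exactly the spans you describe. The only thing your write-up adds beyond what is strictly needed is the identification of the operator domain with $\Sigma^2(\R^d)$, which is true but not required for the statement; everything else is both necessary and correctly justified.
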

In particular, the smallest eigenvalue is given by $\lambda_{0,1} = \frac{\omega d}{2} > 0$. Notice that the associated eigenfunctions form a complete orthonormal basis of $L^2(\R^d)$. 
\newline
In the case $\Omega \neq 0$, we have the following \cite{ChMaSp15}.
\begin{prop}\label{prp:eigenv}
	Consider $ \omega > |\Omega|$. Then	$H_\Omega$ is essentially self-adjoint on $C_0^\infty (\R^d) \subset L^2 (\R^d)$, with compact resolvent and discrete spectrum. Moreover, if for any $j$, $\omega_j = \omega$, then the spectrum of $H_\Omega$ is given by $\sigma(H_\Omega) = \{ \lambda_{\Omega,n} \}_{n \in \N} $ where
	\begin{equation*}
	\{ \lambda_{\Omega,n} \}_{n \in \N} = \{ \lambda_{0,k} + m \Omega, \ -k + 1 \leq m \leq k - 1, \, \mbox{for} \ k \in \N \}.
	\end{equation*}
\end{prop}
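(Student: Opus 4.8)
The plan is to reduce the spectral analysis of $H_\Omega$ to that of the commuting pair $(H_0,\Omega\cdot L)$, where $H_0=-\frac12\Delta+\frac12\omega^2|x|^2$ is the isotropic oscillator. The decisive structural fact is that when $\omega_j=\omega$ for all $j$, the potential $V=\frac12\omega^2|x|^2$ is rotationally invariant, so $H_0$ commutes with every component of $L$, and in particular $[H_0,\Omega\cdot L]=0$. Writing $H_\Omega=H_0-\Omega\cdot L$, the three operators $H_\Omega$, $H_0$ and $\Omega\cdot L$ can then be simultaneously diagonalized, and $\sigma(H_\Omega)$ is read off the joint spectrum.

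First I would settle self-adjointness and discreteness. The clean way is to complete the square in the plane orthogonal to $\Omega$: with $A=|\Omega|(-x_2,x_1,0)$ (and the obvious planar analogue for $d=2$), the identity $(-\ii\nabla-A)^2=-\Delta-2\,\Omega\cdot L+|\Omega|^2|x_\perp|^2$ yields
\begin{equation*}
H_\Omega=\frac12(-\ii\nabla-A)^2+\frac12(\omega^2-|\Omega|^2)|x_\perp|^2+\frac12\omega^2|x_\parallel|^2,
\end{equation*}
where $x_\perp,x_\parallel$ are the components of $x$ perpendicular and parallel to $\Omega$ (the last term being absent for $d=2$). The magnetic kinetic term is nonnegative, so under the hypothesis $\omega>|\Omega|$ the operator is bounded below with a genuinely confining effective potential; this is precisely where $\omega>|\Omega|$ enters. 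Its quadratic form domain is $\Sigma(\R^d)$, essential self-adjointness on $C_0^\infty$ follows from standard criteria for Schrödinger operators with confining potentials, and the compact embedding $\Sigma(\R^d)\hookrightarrow L^2(\R^d)$ from Proposition \ref{prp:comemb} gives compact resolvent, hence purely discrete spectrum.

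Next I would diagonalize simultaneously. Since $H_0$ has discrete spectrum $\{\lambda_{0,k}\}$ with finite-dimensional eigenspaces and $\Omega\cdot L$ commutes with $H_0$, the operator $\Omega\cdot L$ restricts to each eigenspace of $H_0$; there $H_\Omega$ acts as $\lambda_{0,k}-\Omega\cdot L$, so its eigenvalues are $\lambda_{0,k}-m|\Omega|$ as $m$ ranges over the angular-momentum eigenvalues carried by the $k$-th oscillator level. To pin down this range I would use the ladder-operator (circular-quanta) description of the isotropic oscillator: introducing $a_\pm$ in the rotation plane one has $\Omega\cdot L=|\Omega|(a_+^\dagger a_+-a_-^\dagger a_-)$, and the level with total quantum number $N=k-1$ carries exactly the angular momenta $m$ with $|m|\le k-1$ (every such integer in $d=3$, once the transverse and the $x_\parallel$ modes are combined; only those $m\equiv k-1\ (\mathrm{mod}\ 2)$ in $d=2$). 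As the $m$-range is symmetric about $0$, replacing $m$ by $-m$ rewrites the eigenvalues as $\lambda_{0,k}+m|\Omega|$, the claimed formula (with the usual abuse $\Omega=|\Omega|$), and completeness of the oscillator eigenbasis shows these exhaust $\sigma(H_\Omega)$.

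The hard part will be the combinatorial bookkeeping of the angular-momentum content of each oscillator level, i.e.\ verifying that exactly the values $m\in\{-(k-1),\dots,k-1\}$ occur, with the parity restriction peculiar to $d=2$. This is a representation-theoretic computation — decomposing the degenerate eigenspace under the one-parameter rotation generated by $\Omega$ — and it is the only place where the distinction between $d=2$ and $d=3$ really matters. By comparison the self-adjointness and discreteness step is routine once the completing-the-square identity exposes the confining structure and the role of $\omega>|\Omega|$.
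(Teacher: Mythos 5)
The paper does not actually prove this proposition: it is quoted verbatim from \cite{ChMaSp15} (their Proposition on the spectrum of $H_\Omega$), so there is no in-paper argument to compare yours against. Judged on its own terms, your proof is sound and essentially complete. The completing-the-square identity is correct ($\nabla\cdot A=0$ and $2\ii A\cdot\nabla=-2\,\Omega\cdot L$ for $A=|\Omega|(-x_2,x_1,0)$), it correctly isolates where $\omega>|\Omega|$ is used, and the ensuing chain --- essential self-adjointness via standard magnetic Schr\"odinger criteria for nonnegative confining potentials, form domain equal to $\Sigma(\R^d)$ with form norm equivalent to the $\Sigma$-norm, compact resolvent from Proposition \ref{prp:comemb} --- is the standard and correct route. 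The reduction to the commuting pair $(H_0,\Omega\cdot L)$ and the restriction of $\Omega\cdot L$ to the finite-dimensional oscillator levels is also correct; the circular-quanta bookkeeping you defer is routine and gives exactly $m\in\{-(k-1),\dots,k-1\}$ in $d=3$ and the parity-restricted subset $m\equiv k-1\ (\mathrm{mod}\ 2)$ in $d=2$. Your observation that the displayed formula, as stated without a parity constraint, is literally accurate only for $d=3$ is a genuine and worthwhile catch: for $d=2$ and generic $\Omega/\omega$ the value $\lambda_{0,k}+m\Omega$ with $m\not\equiv k-1\ (\mathrm{mod}\ 2)$ is not in the spectrum, so the statement should be read with that restriction understood. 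The only thing I would ask you to make explicit is the sense in which $H_0$ and $\Omega\cdot L$ commute for unbounded self-adjoint operators --- the clean justification is that the unitary rotation group $e^{\ii s\,\Omega\cdot L/|\Omega|}$ commutes with $H_0$ because $V$ is isotropic, hence the spectral projections of $H_0$ commute with $\Omega\cdot L$ and the restriction argument is legitimate.
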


In particular, when $|\Omega| < \omega$, we still have that the smallest eigenvalue is $\lambda_{\Omega,1} = \frac{\omega d}{2} > 0$. Consequently, ground state's energy remains the same with rotation.

\section{Existence of Solutions} \label{sec:localGP}

In this section, we study local and global well-posedness properties of equation \eqref{eq:gpe}.
\newline
As already discussed in the introduction, we will present two different strategies to study local solutions.
The standard fixed point argument for evolutionary problems such as \eqref{eq:gpe}, see for instance \cite{Ka87}, consists in finding a complete metric space and a nonlinear contraction whose unique fixed point provides us with the solution.
\newline
The nonlinear map is usually associated with the integral formulation of the equation, which in our case reads
\begin{equation*}
\psi(t)=U_\Omega(t)\psi_0
-\frac{i+\gamma}{1+\gamma^2}\int_0^tU_\Omega(t-s)\left(
g|\psi|^{2\sigma}\psi-\mu[\psi]\psi\right)(s)\,ds.
\end{equation*}
The presence of the nonlocal term $\mu[\psi]$ prevents us to follow the approach in \cite{Ka87} and defining the metric space endowed with the weaker distance based on mixed space-time Lebesgue norms. In particular, the distance must also control the gradient of the difference between the two functions. On the other hand, defining a distance based on Sobolev spaces requires the nonlinear term to be locally Lipschitz in those spaces, for instance from $H^1$ into itself. This implies that we can exploit the standard fixed point argument only when $\frac12\geq\sigma$. 
This strategy will be discussed in subsection \ref{sec:lwp2}.
\newline
To cover also the case $0<\sigma<\frac12$, we developed a different argument, based on an iterative procedure. We construct a sequence of approximating solutions $\{\psi^{(k)}\}$, where at each step the chemical potential is determined by the previous iterate. More precisely, for any $k\geq1$, we study the following Cauchy problem
\begin{equation*}
\begin{cases}
 (\ii - \gamma)\partial_t \psi^{(k)} = -\frac{1}{2}\Delta \psi^{(k)} +V\psi^{(k)} + g|\psi^{(k)}|^{2\sigma} \psi^{(k)} - \Omega \cdot L\psi^{(k)} - \mu[\psi^{(k-1)}] \psi^{(k)}, \\
 \psi^{(k)}(0) = \psi_0 \in \Sigma(\R^d). 
\end{cases} 
\end{equation*}
This allows us to exploit the standard fixed point argument at each step, with no restriction on $\sigma$. To find a solution to the original equation, we will prove uniform bounds on $\{\psi^{(k)}\}$ and pass to the limit. Inferring these priori estimates on the $L^2([0, T), \Sigma^2(\R^d))$-norm of the approximating profiles requires to assume that $|\Omega|<\frac{\omega}{\sqrt2}$.
This strategy will be discussed in subsection \ref{sec:secondLocal}.

\subsection{Local well-posedness for $\sigma \geq \frac{1}{2}$}\label{sec:lwp2}

In this subsection, we present our first proof for the local well-posedness result, which is based on a fixed point argument and requires to suppose that $\sigma \geq \frac{1}{2}$. We will now prove Theorem \ref{thm:lwp2}.

\begin{proof}[Proof of Theorem \ref{thm:lwp2}:]
We will show the proof for $g\neq0$, the other case being a straightforward adaptation. Fix $M, N, T>0$, to be chosen later, and let 
\begin{equation}\label{eq:rQThetaGp}
 r=2\sigma +2, \quad q = \frac{4 \sigma + 4}{d \sigma}. 
\end{equation}
Notice that the pair $(q, r)$ satisfies condition \eqref{eq:qrAdm}. Consider the set
\begin{equation*}
	\begin{aligned}
		 \mathcal E = \bigg\{ & u \in L^{\infty}\left([0,T], \Sigma(\R^d) \right), \ u,xu,\nabla u\in L^{q}\left( (0,T), L^{r}(\R^d)\right): \, \\ 
       & \| u \|_{L^{\infty}\left([0,T], \Sigma(\R^d)\right)} \leq M, 
		  \| u \|_{L^{q}\left((0,T), W^{1,r} \right)} \leq M, \ 
        \| x u \|_{L^{q}\left((0,T), L^r\right)} \leq M, \\ 
        &\inf_{t \in [0,T]} \| u \|_{L^2} \geq \frac{N}{2} \bigg\},		 
	\end{aligned}
\end{equation*}
equipped with the distance
\begin{equation*}
	\mathrm{d}(u, v)=\|u-v\|_{L^{q}\left((0, T), W^{1,r}\right)}+\|u-v\|_{L^{\infty}\left([0, T], \Sigma \right)} + \|x(u-v)\|_{L^{q}\left((0, T), L^r\right)}.
\end{equation*}
Clearly $(\mathcal E, d)$ is a complete metric space. We fix $u,v \in \mathcal E$. We observe that the inequality 
\begin{equation*}
	\left||u|^{2\sigma}u - |v|^{2\sigma} v \right| \leq C\left(|u|^{2\sigma}+|v|^{2\sigma}\right)|u-v|,
\end{equation*} 
implies that
\begin{equation*}
	\begin{aligned}
		\left\||u|^{2\sigma} u - |v|^{2\sigma} v \right\|_{L^{q^{\prime}}\left((0, T), L^{r^\prime}\right)} & \lesssim 
		\left(\|u\|_{L^{\infty}\left([0, T], L^{r}\right)}^{2\sigma}+\|v\|_{L^{\infty}\left([0,T], L^{r}\right)}^{2\sigma}\right) \\ 
        &\times \|u-v\|_{L^{q^\prime}\left((0, T), L^{r}\right)}.
	\end{aligned}
\end{equation*}
Moreover, using the embedding $\Sigma\left(\R^{d}\right) \hookrightarrow L^{r}\left( \R^d \right)$ and H\"older's inequality, it also follows that
\begin{equation*}
	\left\| \nabla \left( |u|^{2\sigma} u - |v|^{2\sigma} v \right) \right\|_{L^{r^{\prime}}} \lesssim \left( \|u\|_{L^{r}}^{2\sigma-1} + \|v\|_{L^{r}}^{2\sigma-1}\right) \left( \|\nabla u\|_{L^{r}} + \|\nabla v\|_{L^{r}} \right) \|u - v\|_{H^1}
\end{equation*}
which implies
\begin{equation}\label{eq:omega2GP}
	\begin{aligned}
		\left\|\nabla \left(|u|^{2\sigma} u - |v|^{2\sigma} v \right) \right\|_{L^{q^{\prime}}\left((0, T), L^{r^{\prime}}\right)} &\lesssim \left(\|u\|_{L^{\infty}\left([0,T], L^{r}\right)}^{2\sigma-1}+\|v\|_{L^{\infty}\left([0,T], L^{r}\right)}^{2\sigma-1}\right) \\ 
		&\times \left( \| \nabla u\|_{L^{q^\prime}((0, T), L^{r})} + \| \nabla v\|_{L^{q^\prime}((0, T), L^{r})} \right)\\
        &\times \|u-v\|_{L^{\infty}\left([0,T], H^1\right)}.
	\end{aligned} 
\end{equation}
In the same way, from 
\begin{equation*}
	\left\| x\left( |u|^{2\sigma} u - |v|^{2\sigma} v \right) \right\|_{L^{r^{\prime}}} \lesssim \left( \|u\|_{L^{r}}^{2\sigma-1} + \|v\|_{L^{r}}^{2\sigma-1}\right) \left( \|x u\|_{L^{r}} + \|x v\|_{L^{r}} \right) \|u - v\|_{H^1}
\end{equation*}
we also obtain
\begin{equation}\label{eq:omega2GP1}
	\begin{aligned}
		\left\|x \left(|u|^{2\sigma} u - |v|^{2\sigma} v \right) \right\|_{L^{q^{\prime}}\left((0, T), L^{r^{\prime}}\right)} & \lesssim \left(\|u\|_{L^{\infty}\left([0,T], L^{r}\right)}^{2\sigma-1}+\|v\|_{L^{\infty}\left([0,T], L^{r}\right)}^{2\sigma-1}\right) \\ 
		& \times \left( \| x u\|_{L^{q^\prime}((0, T), L^{r})} + \|xv\|_{L^{q^\prime}((0, T), L^{r})} \right)\\ 
        & \times \|u-v\|_{L^{\infty}\left([0,T], H^1\right)}.
	\end{aligned} 
\end{equation}
Using H\"older's inequality in time, we deduce from the above estimates that
\begin{equation*}
	 \left\| x |u|^{2\sigma}u \right\|_{L^{q^{\prime}}\left((0, T), L^{r^{\prime}}\right)} + \left\||u|^{2\sigma}u \right\|_{L^{q^{\prime}}\left((0, T), W^{1, r^{\prime}}\right)} \lesssim \left(T+T^{\frac{q-q^{\prime}}{q q^{\prime}}}\right)\left(1+M^{2\sigma}\right) M
\end{equation*}
 and
\begin{equation*}
\begin{aligned}
 &\left\|x(|u|^{2\sigma}u - |v|^{2\sigma} v) \right\|_{L^{q^{\prime}}\left((0, T), L^{r^{\prime}}\right)} + \left\||u|^{2\sigma}u - |v|^{2\sigma} v \right\|_{L^{q^{\prime}}\left((0, T), W^{1, r^{\prime}}\right)} \\
	&\lesssim \left(T+T^{\frac{q-q^{\prime}}{q q^{\prime}}}\right)\left(1+M^{2\sigma}\right) \mathrm{d}(u, v) .
\end{aligned}
\end{equation*}
Next, given any $u, v \in \mathcal E$, we notice that we have
\begin{equation*}
	\left| \mu[u] u - \mu[v] v \right| \leq |v| \left| \mu[u] - \mu[v] \right| + \left| \mu [u] \right| \left| u - v\right|
\end{equation*}
which implies
\begin{equation*}
	\begin{aligned}
	\left\| \mu[u]u - \mu[v] v \right\|_{L^{1}\left((0, T),\Sigma \right)} \lesssim T\bigg(& \| \mu[u]\|_{L^\infty[0,T]} \| u - v\|_{L^{\infty}\left([0,T],\Sigma \right)} \\ 
	 & +\| v\|_{L^{\infty}\left([0,T],\Sigma \right)} \left\| \mu[u] - \mu[v] \right \|_{L^{\infty}[0,T]} \bigg).
	\end{aligned}
\end{equation*}
By the definition of $L$, see \eqref{eq:ang_mom}, we notice that for any $u \in \Sigma$, from the Cauchy-Schwarz inequality we have
\begin{equation}\label{eq:rotenCN}
 \left| \int \Omega \cdot Lu \, \bar{u} \, dx \right| \leq |\Omega| \| x u \|_{L^2}\|\nabla u \|_{L^2} \lesssim \| u \|_{\Sigma}^2.
\end{equation}
Thus, for any $u \in \mathcal E$, there exists $C(M,N) >0$ such that
\begin{equation}\label{eq:omegaLGP}
 \| \mu[u]\|_{L^\infty[0,T]} \lesssim \frac{1}{N^2} \big( \| u\|_{L^\infty([0,T],\Sigma)}^2 + \| u\|_{L^\infty([0,T],\Sigma)}^{2\sigma + 2}\big) \leq C.
\end{equation}
Moreover, from the embedding $\Sigma(\R^d) \hookrightarrow L^{r}\left( \R^d \right)$ we also obtain that 
\begin{equation}\label{eq:omega1GP}
\begin{aligned}
 \left\| \mu[u] - \mu[v] \right\|_{L^{\infty}[0,T]} \leq C \| u- v \|_{L^{\infty}\left([0,T],\Sigma \right)}.
\end{aligned}
\end{equation}
For any $u_0 \in \Sigma(\R^d)$, let $\mathcal{H}(u_0)(u)(t) = \mathcal{H}(u)(t) $ be defined as
\begin{equation*}
	\mathcal{H}(u)(t)= U_\Omega(t) u_0 - \frac{i+\gamma}{1+\gamma^2} \int_0^t U_\Omega(t - \tau)(g |u(\tau)|^{2\sigma}u(\tau) - \mu[u(\tau)] u(\tau)) \, d\tau,
\end{equation*}
where $U_\Omega(t)$ is defined in \eqref{eq:propagGP}.
By using Proposition \ref{prp:strich}, the embedding $\Sigma(\R^d) \hookrightarrow L^r(\R^d) $ and H\"older's inequality, we obtain
\begin{equation}\label{eq:str11GP}
	\begin{split}
	\| \mathcal H(u)\|_{L^q([0,T),L^r) \cap L^\infty([0,T],L^2)} &\lesssim \| u_0\|_{L^2} + T^{\frac{q-q^{\prime}}{q q^{\prime}}} \| u\|^{2\sigma}_{L^\infty([0,T],\Sigma)} \| u\|_{L^q([0,T),L^r)} \\ 
	&+ T \| \mu[u] \|_{L^\infty[0,T]} \| u\|_{L^\infty([0,T],L^2)}.
	\end{split}
\end{equation}
Moreover, using the commutator estimates in Proposition \ref{prp:comm}, we get that
\begin{equation*}
	\begin{split}
		\nabla \mathcal H(u) &= U_\Omega(t) \nabla u_0 - \frac{i+\gamma}{1+\gamma^2} \int_0^t U_\Omega(t - \tau)\nabla(g |u(\tau)|^{2\sigma}u(\tau) - \mu[u(\tau)] u(\tau)) \, d\tau \\
		& - \frac{i+\gamma}{1+\gamma^2} \int_0^t U_\Omega(t - \tau)(\nabla V -\ii\Omega \wedge \nabla) \mathcal H(u) (\tau) \, d\tau 
	\end{split}
\end{equation*}
and
\begin{equation*}
	\begin{split}
	x \mathcal H(u) &= U_\Omega(t) x u_0 - \frac{i+\gamma}{1+\gamma^2} \int_0^t U_\Omega(t - \tau)\nabla(g |u(\tau)|^{2\sigma}u(\tau) - \mu[u(\tau)] u(\tau)) d\tau \\
	& - \frac{i+\gamma}{1+\gamma^2} \int_0^t U_\Omega(t - \tau)(\nabla -\ii\Omega \wedge x) \mathcal H(u) (\tau) d\tau. 
\end{split}
\end{equation*}
Since $\nabla V$ is linear, the embedding $\Sigma(\R^d) \hookrightarrow L^r(\R^d) $ and H\"older's inequality imply that
\begin{equation*}
	\begin{aligned}
		\| \nabla \mathcal H (u) \|_{L^q([0,T),L^r) \cap L^\infty([0,T],L^2)} & \lesssim \| \nabla u_0\|_{L^2} + T^{\frac{q-q^{\prime}}{q q^{\prime}}} \| u\|^{2\sigma}_{L^\infty([0,T],H^1)} \| \nabla u\|_{L^q([0,T),L^r)} \\
		&+ T\| \mu[u] \|_{L^\infty[0,T]} \|\nabla u\|_{L^\infty([0,T],L^2)} \\
		& + T\| x \mathcal H(u)\|_{L^\infty([0,T],L^2)} + T\| \nabla \mathcal H(u)\|_{L^\infty([0,T],L^2)},
	\end{aligned}
\end{equation*}
and
\begin{equation*}
	\begin{split}
		\| x\mathcal H (u) \|_{L^q([0,T),L^r) \cap L^\infty([0,T],L^2)} &\lesssim \| x u_0\|_{L^2} + T^{\frac{q-q^{\prime}}{q q^{\prime}}} \| u\|^{2\sigma}_{L^\infty([0,T],H^1)} \| x u\|_{L^q([0,T),L^r)} \\
		&+ T\| \mu[u] \|_{L^\infty[0,T]} \|\nabla u\|_{L^\infty([0,T],L^2)} \\
		& + T\| x \mathcal H(u)\|_{L^\infty([0,T],L^2)} + T\| \nabla \mathcal H (u)\|_{L^\infty([0,T],L^2)}.
	\end{split}
\end{equation*}
It follows from Proposition \ref{prp:strich}, \eqref{eq:omegaLGP} and the estimates above that there exist $C_1>0$ and $K(M,N) >0$ such that
\begin{equation*}
\begin{aligned}
& \| \mathcal H (u) \|_{L^\infty([0,T],\Sigma) \cap L^{q}\left((0, T), W^{1, r}\right)} + \| x\mathcal H (u) \|_{L^q([0,T),L^r)} \\ 
& \leq C_1 \bigg( \|u_0\|_{\Sigma}+\left(T+T^{\frac{q-q^{\prime}}{q q'}}\right)K M \bigg),
\end{aligned}
\end{equation*}
and 
\begin{equation*}
	\begin{aligned}
		d(\mathcal H(u),\mathcal H(v)) \leq C_1\left(T+T^{\frac{q-q^{\prime}}{q q^{\prime}}}\right)K \mathrm{d}(u, v) .
	\end{aligned}
\end{equation*}
Note that
\begin{equation*}
	\frac{q-q^{\prime}}{q q^{\prime}}=1-\frac{2}{q}= \frac{2 + 2\sigma - d\sigma }{2\sigma+2}>0 .
\end{equation*}
We set $
M=2 C_1\|u_0\|_{\Sigma}$
and we choose $T$ small enough so that the following inequality 
\begin{equation}\label{eq:tempGP1}
	C_1\left(T+T^{\frac{q-q^{\prime}}{q q'}}\right)K \leq \frac{1}{2}
\end{equation} 
is satisfied. Finally, notice that \eqref{eq:GPlinL2}, \eqref{eq:GPlinL3}, \eqref{eq:rotenCN} and \eqref{eq:str11GP} imply that for any $t \in [0,T]$, there exist $C_2, C_3 >0$ such that 
\begin{equation*}
 \begin{aligned}
 \| \mathcal H(u)(t) \|_{L^2} &\geq \| U_\Omega(t) u_0 \|_{L^2} \\ 
 &- C_2 \bigg\| \int_0^t U_\Omega(t - \tau)(g |u(\tau)|^{2\sigma}u(\tau) - \mu[u(\tau)] u(\tau)) \, d\tau \bigg\|_{L^{\infty}([0,T],L^2)} \\
 & \geq \bigg( \| u_0 \|_{L^2}^2 - \frac{2 \gamma}{1 + \gamma^2} \int_0^t \big( H_\Omega (U_\Omega(\tau) u_0), U_\Omega(\tau) u_0 \big) \, d\tau \bigg)^\frac{1}{2} \\
 & - C_2 \big( T^{\frac{q-q^{\prime}}{q q^{\prime}}} \| u\|^{2\sigma}_{L^\infty([0,T],\Sigma)} \| u\|_{L^q([0,T),L^r)}  \\
 & \hspace{1cm}+ T \| \mu[u] \|_{L^\infty[0,T]} \| u\|_{L^\infty([0,T],L^2)} \big) \\
 & \geq \big( \| u_0 \|_{L^2}^2 - C_3 T \| u_0\|_{L^\infty([0,T],\Sigma)}^2 \big)^\frac{1}{2} \\
 & - C_2 \big( T^{\frac{q-q^{\prime}}{q q^{\prime}}} \| u\|^{2\sigma}_{L^\infty([0,T],\Sigma)} \| u\|_{L^q([0,T),L^r)} \\
 & \hspace{1cm}+ T \| \mu[u] \|_{L^\infty[0,T]} \| u\|_{L^\infty([0,T],L^2)} \big).
 \end{aligned}
\end{equation*}
Exploiting \eqref{eq:omegaLGP} in the inequality above implies that there exists $K_1 (N,M)>0$ such that 
\begin{equation*}
 \begin{aligned}
 \inf_{t \in [0,T]} \| \mathcal H(u) \|_{L^2} \geq \| u_0 \|_{L^2} - K_1 (T^\frac{1}{2} + T^{\frac{q-q^{\prime}}{q q^{\prime}}} + T).
 \end{aligned}
\end{equation*}
Now we set $N = \| u_0 \|_{L^2}$. By choosing $T >0$ which satisfies condition \eqref{eq:tempGP1} and also 
\begin{equation*}
 (T^\frac{1}{2} + T^{\frac{q-q^{\prime}}{q q^{\prime}}} + T) \leq \frac{N}{2 K_1}
\end{equation*}
we obtain that $\mathcal H$ maps $\mathcal E$ into itself and it is a contraction.
Thus $\mathcal H$ admits a fixed point $\psi \in C([0,T],\Sigma(\R^d))$, which is a solution to the Cauchy problem \eqref{eq:gpe}. The uniqueness follows from the fact that the map $\mathcal H$ is a contraction in $\mathcal E$. Moreover, notice that we can extend the local solution until the $\Sigma$-norm of $\psi(t)$ is bounded. Hence we obtain the blow-up alternative, that is, if $T_{max}>0$ is the maximal time of existence, then $T_{max}= \infty$, or $T_{max}< \infty$ and
	\begin{equation*}
		\lim_{t \rightarrow T_{max}} \| \psi(t)\|_{\Sigma} = \infty.
	\end{equation*}
\end{proof}

\begin{remark}
Let us emphasize why we need the condition $\sigma \geq \frac{1}{2}$. On one hand, in the absence of the nonlocal term $\mu$, we could use the contraction principle in the set $\mathcal E$ with the weaker distance 
\begin{equation*}
 \mathrm{g}(u, v)=\|u-v\|_{L^{q}\left((0, T), L^r\right)}+\|u-v\|_{L^{\infty}\left((0, T), L^2\right)}.
\end{equation*}
Indeed, it is standard to prove that $(\mathcal E,\mathrm{g})$ is a complete metric space. On the other hand, the presence of $\mu$ requires a stronger distance induced by the $L^\infty_t\Sigma$-norm, as it is clear from inequality \eqref{eq:omega1GP}. This implies that we have to use the distance $\mathrm{d}$ instead of $\mathrm{g}$. But for $\sigma < \frac{1}{2}$, this is not possible because the power-type nonlinearity is not locally Lipschitz continuous in Sobolev spaces (inequality \eqref{eq:omega2GP} fails).
\end{remark}

\subsection{Local well-posedness for $\sigma<\frac12$} \label{sec:secondLocal}

In this subsection, we address the local well-posedness problem in the case $0<\sigma<\frac12$. Here we adopt a different strategy, based on an iterative argument that constructs a sequence of approximating solutions. We start by setting $\psi^{(0)}\equiv0$. For $k\geq1$, let us assume that we already constructed $\psi^{(k-1)}\in C([0,T),\Sigma(\R^d))$. 
We then define the approximating solution $\psi^{(k)}$ as the solution to the following problem
\begin{equation*}
\begin{cases}
 &(\ii - \gamma)\partial_t \psi^{(k)} = -\frac{1}{2}\Delta \psi^{(k)} +V\psi^{(k)} + g|\psi^{(k)}|^{2\sigma} \psi^{(k)} - \Omega \cdot L\psi^{(k)} - \mu[\psi^{(k-1)}] \psi^{(k)}, \\
 &\psi^{(k)}(0) = \psi_0 \in \Sigma(\R^d). 
\end{cases} 
\end{equation*}
Notice that in the equation above, the term $\mu[\psi^{(k-1)}]$ is a $L^\infty$ function of time not depending on $\psi^{(k)}$. Thus we can use the classical contraction principle to prove the local existence of $\psi^{(k)}$ for the whole range $\sigma \in \left[0,\frac{2}{(d-2)^+}\right)$. We will then prove the convergence of the sequence $\{ \psi^{(k)}\}$ to a profile $\psi$. In order to show that $\psi$ is indeed a solution to \eqref{eq:gpe}, we will use the compact embedding $\Sigma^2(\R^d) \hookrightarrow \Sigma(\R^d)$ and the supposition $|\Omega| < \frac{\omega}{\sqrt{2}}$. This supposition is needed to show that $\psi \in L^2([0,T),\Sigma(\R^d))$, and, in particular, that $\mu [\psi(t)]$ is well defined, see Proposition \ref{prp:lwp}. Last, we will prove the uniqueness of solutions using a classical energy estimate. \newline 
In view of the description above, we start by studying the simplified model 
\begin{equation}\label{eq:simp_gp}
\begin{cases}
 (\ii - \gamma)\partial_t \psi = -\frac{1}{2}\Delta \psi +V\psi + g|\psi|^{2\sigma} \psi - \Omega \cdot L\psi - \lambda\psi,\\
 \psi(0) = \psi_0 \in \Sigma(\R^d)
\end{cases}
\end{equation}
where $\lambda$ is a $L^\infty$-function of time. Using the Strichartz estimates in Proposition \ref{prp:strich}, we will prove the local well-posedness of equation \eqref{eq:simp_gp}.

\begin{prop}\label{prp:contr}
	Let $0\leq \sigma < \frac{2}{(d-2)^+}$ and $\lambda \in L^\infty(\R)$. Then for any $\psi_0\in \Sigma(\R^d)$, there exists $0 < T = T(\|\psi_0\|_{\Sigma}, \| \lambda \|_{L^\infty})$, and a unique solution $\psi \in C([0,T), \Sigma(\R^d))$ to \eqref{eq:simp_gp}.
\end{prop}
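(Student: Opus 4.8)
The plan is to solve the integral (Duhamel) formulation of \eqref{eq:simp_gp} by a contraction argument for the map
\[
\mathcal H(u)(t) = U_\Omega(t)\psi_0 - \frac{\ii+\gamma}{1+\gamma^2}\int_0^t U_\Omega(t-\tau)\big(g|u(\tau)|^{2\sigma}u(\tau) - \lambda(\tau) u(\tau)\big)\,d\tau,
\]
proceeding exactly as in the proof of Theorem \ref{thm:lwp2}, but now exploiting the crucial fact that $\lambda$ is a \emph{given} $L^\infty$ function of time rather than the nonlocal functional $\mu[\psi]$. This is precisely what lets us run the fixed point in the weaker distance $\mathrm g(u,v)=\|u-v\|_{L^q_tL^r}+\|u-v\|_{L^\infty_tL^2}$ anticipated in the Remark, and thereby cover the full range $0\le\sigma<\frac{2}{(d-2)^+}$. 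I keep the admissible pair $r=2\sigma+2$, $q=\frac{4\sigma+4}{d\sigma}$, and work on the set $\mathcal E$ of functions bounded by $M$ in $L^\infty_t\Sigma$, $L^q_tW^{1,r}$ and in the weight $\|xu\|_{L^q_tL^r}$, now equipped with the weak metric $\mathrm g$. Note that the mass lower bound $\inf_t\|u\|_{L^2}\ge N/2$ is no longer needed, since it only served to control $\mu[u]=O(1/\|u\|_{L^2}^2)$, which is absent here. (For $\sigma=0$ the equation is linear, so I assume $0<\sigma$.)

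First I would check that $(\mathcal E,\mathrm g)$ is complete: a $\mathrm g$-Cauchy sequence converges in the weak norms to some $u$, and the uniform $M$-bounds pass to the limit by weak-$\ast$ lower semicontinuity of the defining $\Sigma$-, $W^{1,r}$- and weighted norms, so $u\in\mathcal E$. Next I establish the map-into-itself bound using the Strichartz estimates of Proposition \ref{prp:strich} on the linear part, the pointwise inequality $|\nabla(|u|^{2\sigma}u)|\lesssim|u|^{2\sigma}|\nabla u|$ together with $\Sigma\hookrightarrow L^r$ and Hölder in time to obtain $\||u|^{2\sigma}u\|_{L^{q'}_tW^{1,r'}}+\|x|u|^{2\sigma}u\|_{L^{q'}_tL^{r'}}\lesssim T^{\theta}(1+M^{2\sigma})M$ with $\theta=\frac{q-q'}{qq'}>0$, and the trivial estimate $\|\lambda u\|_{L^1_t\Sigma}\lesssim T\|\lambda\|_{L^\infty}\|u\|_{L^\infty_t\Sigma}$ for the linear term. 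The gradient and the weight of $\mathcal H(u)$ are treated via the commutator identities of Proposition \ref{prp:comm}, which generate the coupling terms $T\|x\mathcal H(u)\|_{L^\infty_tL^2}+T\|\nabla\mathcal H(u)\|_{L^\infty_tL^2}$; these are absorbed for $T$ small.

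The decisive point, and the reason the argument works for every $\sigma\ge0$, appears in the contraction estimate. Since $\mathrm g$ controls only differences in $L^q_tL^r$ and $L^\infty_tL^2$, I merely need the Lipschitz bound
\[
\big\||u|^{2\sigma}u-|v|^{2\sigma}v\big\|_{L^{q'}_tL^{r'}} \lesssim T^{\theta}\big(\|u\|_{L^\infty_tL^r}^{2\sigma}+\|v\|_{L^\infty_tL^r}^{2\sigma}\big)\|u-v\|_{L^q_tL^r},
\]
which follows from $\big||u|^{2\sigma}u-|v|^{2\sigma}v\big|\lesssim(|u|^{2\sigma}+|v|^{2\sigma})|u-v|$ and holds with no restriction on $\sigma$. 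In particular the gradient of the difference never enters, so the obstruction \eqref{eq:omega2GP} that forced $\sigma\ge\frac12$ in Theorem \ref{thm:lwp2} simply does not arise. Combined with $\|\lambda(u-v)\|_{L^1_tL^2}\lesssim T\|\lambda\|_{L^\infty}\|u-v\|_{L^\infty_tL^2}$, this yields $\mathrm g(\mathcal H(u),\mathcal H(v))\le C(M,\|\lambda\|_{L^\infty})(T+T^{\theta})\,\mathrm g(u,v)$.

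To conclude I fix $M=2C_1\|\psi_0\|_\Sigma$ and then choose $T=T(\|\psi_0\|_\Sigma,\|\lambda\|_{L^\infty})$ small enough that the map-into-itself constant is $\le M$ and the contraction constant is $\le\frac12$; Banach's fixed point theorem in $(\mathcal E,\mathrm g)$ then gives a unique fixed point $\psi$. Its continuity $\psi\in C([0,T),\Sigma)$ follows from the Duhamel representation and the Strichartz bounds by a standard argument, while uniqueness in the full class $C([0,T),\Sigma)$ follows from the contraction applied on small subintervals together with a standard continuation argument. The only genuinely delicate bookkeeping is the completeness of $(\mathcal E,\mathrm g)$ and closing the strong $\Sigma$-bounds through the commutator coupling while contracting only in the weak metric; everything else is a routine adaptation of the proof of Theorem \ref{thm:lwp2} with $\mu[\psi]$ replaced by the given $\lambda$.
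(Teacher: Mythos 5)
Your proposal is correct and follows essentially the same route as the paper: the same Duhamel map, the same admissible pair $(q,r)=(\tfrac{4\sigma+4}{d\sigma},2\sigma+2)$, Strichartz plus the commutator identities to close the $\Sigma$-level bounds, and crucially the contraction run in the weak metric $\|u-v\|_{L^q_tL^r}+\|u-v\|_{L^\infty_tL^2}$, which is exactly what lets the argument cover all $\sigma$ since $\lambda$ is a prescribed $L^\infty$ function rather than the nonlocal $\mu[\psi]$. The paper's proof is the same fixed point on a suitable ball of its space $X_T$ with that distance, so no further comment is needed.
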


\begin{proof}
For any $u_0 \in \Sigma$, we define the map $\mathcal F(u_0)(u)(t) = \mathcal F(u)(t)$ as 
	\begin{equation*}
		\mathcal F(u)(t) = U_\Omega(t) u_0 -\frac{i+\gamma}{1+\gamma^2} \int_0^t U_\Omega(t - \tau)(g |u(\tau)|^{2\sigma}u(\tau) - \lambda(\tau) u(\tau)) \, d\tau.
	\end{equation*}
We fix $T >0$ which will be chosen later, and let $(r,q)$ be defined as in \eqref{eq:rQThetaGp}.
We consider the set
\begin{equation*}
	X_T = \left\{ u \in C([0,T],\Sigma(\R^d)); \, u,xu,\nabla u \in L^{q}\left( (0,T), L^{r}(\R^d)\right) \right\},
\end{equation*} 
endowed with the distance 
\begin{equation*}
 d(u,v) = \| u - v\|_{L^{\infty}([0,T],L^2)} + \| u - v\|_{L^q((0,T), L^r)}.
\end{equation*}
It is a standard procedure to prove that the set $(X_T,d)$ is a complete metric space. Let $u\in X_T$.
Proposition \ref{prp:strich}, the embedding $\Sigma(\R^d) \hookrightarrow L^r(\R^d)$ and H\"older's inequality imply that
\begin{equation}\label{eq:str11}
	\begin{split}
	\| \mathcal F(u)\|_{L^q([0,T),L^r) \cap L^\infty([0,T],L^2)} &\lesssim \| u_0\|_{L^2} + T^{\frac{q-q^{\prime}}{q q^{\prime}}} \| u\|^{2\sigma}_{L^\infty([0,T],H^1)} \| u\|_{L^q([0,T),L^r)} \\ 
	&+ T \| \lambda\|_{L^\infty[0,T]} \| u\|_{L^\infty([0,T],L^2)}.
	\end{split}
\end{equation}
Moreover, using the commutator estimates in Proposition \ref{prp:comm}, we obtain
\begin{equation*}
	\begin{split}
		\nabla \mathcal F(u) &= U_\Omega(t) \nabla u_0 -\frac{i+\gamma}{1+\gamma^2} \int_0^t U_\Omega(t - \tau)\nabla(g |u(\tau)|^{2\sigma}u(\tau) - \lambda(\tau) u(\tau)) d\tau \\
		& - (\ii + \gamma) \int_0^t U_\Omega(t - \tau)(\nabla V -\ii\Omega \wedge \nabla) \mathcal F(u) (\tau) d\tau 
	\end{split}
\end{equation*}
and
\begin{equation*}
	\begin{split}
	x \mathcal F(u) &= U_\Omega(t) x u_0 -\frac{i+\gamma}{1+\gamma^2} \int_0^t U_\Omega(t - \tau)\nabla(g |u(\tau)|^{2\sigma}u(\tau) - \lambda(\tau) u(\tau)) d\tau \\
	& -\frac{i+\gamma}{1+\gamma^2} \int_0^t U_\Omega(t - \tau)(\nabla -\ii\Omega \wedge x) \mathcal F(u) (\tau) d\tau. 
\end{split}
\end{equation*}
Since $\nabla V$ is linear, the embedding $H^1(\R^d) \hookrightarrow L^r(\R^d)$ and H\"older's inequality imply that
\begin{equation*}
	\begin{aligned}
		\| \nabla \mathcal F (u) \|_{L^q([0,T),L^r) \cap L^\infty([0,T],L^2)} & \lesssim \| \nabla u_0\|_{L^2} + T^{\frac{q-q^{\prime}}{q q^{\prime}}} \| u\|^{2\sigma}_{L^\infty([0,T],H^1)} \| \nabla u\|_{L^q([0,T),L^r)} \\
		&+ T\| \lambda\|_{L^\infty[0,T]} \|\nabla u\|_{L^\infty([0,T],L^2)} \\
		& + T\| x \mathcal F(u)\|_{L^\infty([0,T],L^2)} + T\| \nabla \mathcal F(u)\|_{L^\infty([0,T],L^2)},
	\end{aligned}
\end{equation*}
and
\begin{equation*}
	\begin{split}
		\| x\mathcal F (u) \|_{L^q([0,T),L^r) \cap L^\infty([0,T],L^2)} &\lesssim \| x u_0\|_{L^2} + T^{\frac{q-q^{\prime}}{q q^{\prime}}} \| u\|^{2\sigma}_{L^\infty([0,T],H^1)} \| x u\|_{L^q([0,T),L^r)} \\
		&+ T\| \lambda\|_{L^\infty[0,T]} \|\nabla u\|_{L^\infty([0,T],L^2)} \\
		& + T\| x \mathcal F(u)\|_{L^\infty([0,T],L^2)} + T\| \nabla \mathcal F (u)\|_{L^\infty([0,T],L^2)}.
	\end{split}
\end{equation*}
By choosing $T$ to be sufficiently small, the function $\mathcal F$ maps a suitable ball in
$X_T$ into itself. Using Proposition \ref{prp:strich} as in the derivation of \eqref{eq:str11} we can also prove that $\mathcal F$ is a contraction with respect to the distance $d$. Thus, by the fixed point theorem, we conclude that for any $\psi_0 \in \Sigma$, there exists a unique solution to \eqref{eq:simp_gp}. 
\end{proof}

We will also need the following lemma in order to find a suitable bound on the approximating sequence.

\begin{lem}
 For any $|\Omega| < \frac{\omega}{\sqrt{2}}$ and $\psi \in \Sigma(\R^d)$, there exists a constant $C(\Omega,\omega)>0$ such that 
 \begin{equation}\label{eq:HSigma}
 \| \psi\|_{\Sigma^2}^2 \leq C \left((H_\Omega \psi, H_\Omega \psi) + \| \psi\|_{\Sigma}^2\right).
 \end{equation}
\end{lem}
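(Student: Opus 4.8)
The plan is to establish the equivalent lower bound $\|H_\Omega\psi\|_{L^2}^2 \gtrsim \|\psi\|_{\Sigma^2}^2 - C\|\psi\|_\Sigma^2$, working first for $\psi\in\Sigma^2(\R^d)$; this suffices for the applications, and for $\psi\in\Sigma\setminus\Sigma^2$ the inequality \eqref{eq:HSigma} is vacuous since its left-hand side is infinite. Writing $H_\Omega=-\tfrac12\Delta+V-\Omega\cdot L$ as in \eqref{eq:lin_opGP} and using the real scalar product $(\cdot,\cdot)$, so that $(H_\Omega\psi,H_\Omega\psi)=\|H_\Omega\psi\|_{L^2}^2$, I would expand the square,
\begin{equation*}
\|H_\Omega\psi\|_{L^2}^2 = \tfrac14\|\Delta\psi\|_{L^2}^2 + \|V\psi\|_{L^2}^2 + \|\Omega\cdot L\psi\|_{L^2}^2 - (\Delta\psi,V\psi) + (\Delta\psi,\Omega\cdot L\psi) - 2(V\psi,\Omega\cdot L\psi),
\end{equation*}
and read off the three ingredients of the norm \eqref{eq:normSig2} from these terms.

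The leading terms directly control two of the three pieces: $\tfrac14\|\Delta\psi\|^2$ is the first term of \eqref{eq:normSig2}, while $V\geq\tfrac{\omega^2}{2}|x|^2$ (by \eqref{eq:V} and \eqref{eq:min_trap}) gives $\|V\psi\|^2\geq\tfrac{\omega^4}{4}\||x|^2\psi\|^2$. For the third piece I would integrate by parts in the kinetic--potential cross term, obtaining
\begin{equation*}
-(\Delta\psi,V\psi) = \int V|\nabla\psi|^2\,dx - \tfrac12\int\Delta V\,|\psi|^2\,dx \geq \tfrac{\omega^2}{2}\big\||x||\nabla\psi|\big\|_{L^2}^2 - C\|\psi\|_{L^2}^2,
\end{equation*}
since $\Delta V=\sum_j\omega_j^2$ is constant. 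Together these three contributions reproduce $\|\psi\|_{\Sigma^2}^2$ up to an error absorbed into $\|\psi\|_\Sigma^2$.

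The rotation terms are the obstacle. The square $\|\Omega\cdot L\psi\|^2\geq0$ is simply discarded. The crucial point is how to estimate $(\Delta\psi,\Omega\cdot L\psi)$: integrating by parts here would create a term of the form $\big\||x|\,\nabla^2\psi\big\|_{L^2}$, which is \emph{not} part of the $\Sigma^2$-norm \eqref{eq:normSig2}. Instead I would keep the two derivatives together and apply Cauchy--Schwarz, so that, by the definition \eqref{eq:ang_mom} of $L$ and the pointwise bound $|x\wedge\nabla\psi|\leq|x||\nabla\psi|$,
\begin{equation*}
\big|(\Delta\psi,\Omega\cdot L\psi)\big| \leq \|\Delta\psi\|_{L^2}\,\|\Omega\cdot L\psi\|_{L^2} \leq |\Omega|\,\|\Delta\psi\|_{L^2}\,\big\||x||\nabla\psi|\big\|_{L^2},
\end{equation*}
both factors now living among the $\Sigma^2$ quantities. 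The remaining cross term $(V\psi,\Omega\cdot L\psi)$ vanishes identically in the isotropic case (as $\Omega\cdot L$ annihilates a radial potential), and otherwise is controlled by $|\Omega|\,\||x|^2\psi\|\,\||x||\nabla\psi|\|$ and absorbed into the generous $\||x|^2\psi\|^2$ budget coming from $\|V\psi\|^2$.

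The heart of the matter, and where I expect the main difficulty to lie, is the final absorption. Splitting the estimate above with Young's inequality,
\begin{equation*}
|\Omega|\,\|\Delta\psi\|\,\big\||x||\nabla\psi|\big\| \leq \tfrac{\epsilon}{2}\|\Delta\psi\|^2 + \tfrac{|\Omega|^2}{2\epsilon}\big\||x||\nabla\psi|\big\|^2,
\end{equation*}
I need the two surviving coefficients $\tfrac14-\tfrac{\epsilon}{2}$ (multiplying $\|\Delta\psi\|^2$) and $\tfrac{\omega^2}{2}-\tfrac{|\Omega|^2}{2\epsilon}$ (multiplying $\||x||\nabla\psi|\|^2$) to be \emph{simultaneously} positive. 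The first forces $\epsilon<\tfrac12$, the second forces $|\Omega|^2<\epsilon\,\omega^2$, and such an $\epsilon$ exists precisely when $|\Omega|^2<\tfrac{\omega^2}{2}$, i.e. $|\Omega|<\tfrac{\omega}{\sqrt2}$. This is exactly the hypothesis of the lemma, and it is the only place where it enters. Collecting the positive lower bounds for $\|\Delta\psi\|^2$, $\||x|^2\psi\|^2$ and $\||x||\nabla\psi|\|^2$, and moving all the $\|\psi\|_{L^2}^2\leq\|\psi\|_\Sigma^2$ remainders to the right-hand side, then yields \eqref{eq:HSigma}. Thus the key insight is to avoid integrating by parts in the rotation cross term so as to remain within the $\Sigma^2$-quantities, after which the delicate balancing of the kinetic and weighted-gradient terms produces the threshold $\omega/\sqrt2$; everything else reduces to routine integration by parts and the commutator structure of Proposition \ref{prp:comm}.
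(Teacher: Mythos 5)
Your overall strategy (expand the square, integrate by parts only in the kinetic--potential cross term, keep two derivatives together in the rotation cross term) matches the paper's, but one step is genuinely false and it breaks the proof exactly at the claimed threshold. The cross term $(V\psi,\Omega\cdot L\psi)$ does \emph{not} vanish for isotropic $V$. What is true is that $\Omega\cdot L$ commutes with multiplication by the radial function $V$, and that $(V\varphi,\Omega\cdot L\varphi)=0$ for \emph{real-valued} $\varphi$; neither implies vanishing for complex $\psi$. Concretely, in $d=2$ take the vortex state $\psi=f(r)e^{\ii\theta}$: then $\Omega\cdot L\psi=|\Omega|\psi$, so
\begin{equation*}
(V\psi,\Omega\cdot L\psi)=|\Omega|\int V|\psi|^2\,dx>0,
\end{equation*}
and states with nonzero angular momentum are of course exactly the relevant ones for a rotating condensate.

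Once this term is present, your fallback (Cauchy--Schwarz plus Young, after having \emph{discarded} $\|\Omega\cdot L\psi\|_{L^2}^2$) cannot reach $|\Omega|<\omega/\sqrt2$. You must pay two costs in the currency $\big\||x||\nabla\psi|\big\|_{L^2}^2$: a cost $\frac{|\Omega|^2}{2\varepsilon}$ from $(\Delta\psi,\Omega\cdot L\psi)$ and a cost $\frac{|\Omega|^2}{2a}$ from $2(V\psi,\Omega\cdot L\psi)$, subject to $\varepsilon\leq\frac12$ (to keep a nonnegative coefficient on $\|\Delta\psi\|_{L^2}^2$) and $a\leq\frac12$ (same for $\big\||x|^2\psi\big\|_{L^2}^2$). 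The surviving coefficient of $\big\||x||\nabla\psi|\big\|_{L^2}^2$ is then at most $\frac{\omega^2}{2}-2|\Omega|^2$, so your argument proves the lemma only for $|\Omega|<\omega/2$; this obstruction persists even if you try to control the three pieces of $\|\psi\|_{\Sigma^2}^2$ one at a time, since already the piece $\big\||x||\nabla\psi|\big\|_{L^2}^2$ requires $|\Omega|<\omega/2$ under your bookkeeping. The paper avoids this loss by \emph{not} discarding $\|\Omega\cdot L\psi\|_{L^2}^2$: both rotation cross terms are absorbed by Young's inequality into that square (costs $\frac{1}{2\varepsilon}\|\Omega\cdot L\psi\|_{L^2}^2$ and $\frac{1}{2\varepsilon_1}\|\Omega\cdot L\psi\|_{L^2}^2$), and only afterwards is the pointwise bound $\|\Omega\cdot L\psi\|_{L^2}^2\leq|\Omega|^2\big\||x||\nabla\psi|\big\|_{L^2}^2$ invoked; moreover the absorption is run three times with different choices of $(\varepsilon,\varepsilon_1)$, each run producing a lower bound by one of the three pieces of $\|\psi\|_{\Sigma^2}^2$, and the three resulting estimates are summed. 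That is the missing idea; with it, the threshold $|\Omega|<\omega/\sqrt2$ you identified from the $(\Delta\psi,\Omega\cdot L\psi)$ term alone is actually attained.
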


\begin{proof}
 By recalling the definition of the operator $H_\Omega$ \eqref{eq:lin_opGP}, we compute
 \begin{equation}\label{eq:scPrH}
 \begin{aligned}
 (H_\Omega \psi, H_\Omega \psi) &= \frac{1}{4} \| \Delta \psi\|_{L^2}^2 + \frac{\omega^4}{4} \left\| |x|^2\psi\right\|_{L^2}^2 + \left\| \Omega \cdot L\psi \right\|_{L^2}^2 - \frac{\omega^2}{2} \left( \Delta \psi, |x|^2 \psi \right) \\
 & + \left(\Delta \psi, \Omega \cdot L \psi
 \right) -\omega^2 \left( |x|^2 \psi, \Omega \cdot L \psi \right) \\
 &= \frac{1}{4} \| \Delta \psi\|_{L^2}^2 + \frac{\omega^4}{4} \left\| |x|^2\psi\right\|_{L^2}^2 + \left\| \Omega \cdot L\psi \right\|_{L^2}^2 + \frac{\omega^2}{2} \big\| |x| |\nabla \psi| \big\|_{L^2}^2 \\
 & + \omega^2 (\nabla \psi, x \psi) + \left(\Delta \psi, \Omega \cdot L \psi
 \right) -\omega^2 \left( |x|^2 \psi, \Omega \cdot L \psi \right),
 \end{aligned}
 \end{equation}
 where we suppose that $\omega_j = \omega$ for any $j$, the other case being similar. The fifth term on the right-hand side of \eqref{eq:scPrH} is bounded by 
 \begin{equation}\label{eq:quaEst0}
 \omega^2 (\nabla \psi, x\psi) \leq C\| \psi\|_{\Sigma}^2.
 \end{equation}
 For the last two terms in \eqref{eq:scPrH}, we use Young's inequality to obtain 
 \begin{equation}\label{eq:eps1}
 \left( \Delta \psi, \Omega \cdot L \psi \right) \leq \| \Delta \psi\|_{L^2}\| \Omega \cdot L\psi \|_{L^2} \leq \frac{\varepsilon}{2}\|\Delta \psi \|_{L^2}^2 + \frac{1}{2\varepsilon} \| \Omega \cdot L\psi \|_{L^2}^2
 \end{equation}
 and 
 \begin{equation}\label{eq:eps2}
 \omega^2 \left(|x|^2 \psi, \Omega \cdot L\psi \right) \leq \omega^2 \big\| |x|^2 \psi \big\|_{L^2} \big\| \Omega \cdot L\psi \big\|_{L^2} \leq \frac{\varepsilon_1 \omega^4}{2}\||x|^2 \psi \|_{L^2}^2 + \frac{1}{2\varepsilon_1} \| \Omega \cdot L\psi \|_{L^2}^2.
 \end{equation}
 for some $\varepsilon, \varepsilon_1 >0$ which will be chosen later. Equation \eqref{eq:scPrH} and estimates \eqref{eq:eps1}, \eqref{eq:eps2} imply that 
 \begin{equation}\label{eq:quadest}
 \begin{aligned}
 A(\psi,\psi)&:= \big( H_\Omega \psi, H_\Omega \psi) - \omega^2 (\nabla \psi, x\psi) \\ 
 &\geq \frac{1}{2} \left(\frac{1}{2} - \eps\right) \big\| \Delta \psi \big\|_{L^2}^2 + \frac{\omega^4}{2} \left(\frac{1}{2} - \eps_1 \right) \big\| |x|^2 \psi \big\|_{L^2}^2 \\
 &+ \frac{\omega^2}{2} \big\| |x| |\nabla \psi| \big\|_{L^2}^2 + \left(1 - \frac{1}{2 \eps} - \frac{1}{2 \eps_1} \right) \big\| \Omega \cdot L \psi \big\|_{L^2}^2. 
 \end{aligned}
 \end{equation}
 In particular, if we fix $\eps = \frac{1}{2}$ in the inequality above, we get
 \begin{equation}\label{eq:quaEst12}
 \begin{aligned}
 A(\psi,\psi) \geq \frac{\omega^4}{2} \left(\frac{1}{2} - \eps_1 \right) \big\| |x|^2 \psi \big\|_{L^2}^2 + \frac{\omega^2}{2} \big\| |x| |\nabla \psi| \big\|_{L^2}^2 - \frac{1}{2 \eps_1} \big\| \Omega \cdot L \psi \big\|_{L^2}^2.
 \end{aligned}
 \end{equation}
 By choosing $\eps_1 = \frac{1}{2}$, we consequently obtain that
 \begin{equation}\label{eq:quaEst1}
 A(\psi,\psi) \geq \frac{\omega^2}{2} \big\| |x| |\nabla \psi| \big\|_{L^2}^2 - \big\| \Omega \cdot L \psi \big\|_{L^2}^2.
 \end{equation}
 From the definition of $L$ in \eqref{eq:ang_mom}, we observe that
 \begin{equation}\label{eq:estRot}
 \big\| \Omega \cdot L \psi \big\|_{L^2}^2 = \int |\Omega \cdot (\ii x \wedge \nabla \psi)|^2\, dx \leq |\Omega|^2 \big\| |x| |\nabla \psi| \big\|_{L^2}^2.
 \end{equation}
 Thus, from \eqref{eq:quaEst1}, we get
 \begin{equation}\label{eq:sigma21}
 A(\psi,\psi) \geq \left( \frac{\omega^2}{2} - |\Omega|^2 \right) \big\| |x| |\nabla \psi| \big\|_{L^2}^2 \geq C(\omega, \Omega) \big\| |x| |\nabla \psi| \big\|_{L^2}^2, 
 \end{equation}
 where $C(\omega, \Omega) >0$ due to the supposition that $|\Omega| < \frac{\omega}{\sqrt{2}}$. \newline
 Moreover, if we choose $\eps_1 = \frac{|\Omega|^2}{\omega^2}$ in inequality \eqref{eq:quaEst12}, then we obtain that
 \begin{equation*}
 A(\psi,\psi) \geq \frac{\omega^4}{2} \left(\frac{1}{2} - \frac{|\Omega|^2}{\omega^2} \right) \big\| |x|^2 \psi \big\|_{L^2}^2 + \frac{\omega^2}{2} \big\| |x| |\nabla \psi| \big\|_{L^2}^2 - \frac{\omega^2}{2 |\Omega|^2} \big\| \Omega \cdot L \psi \big\|_{L^2}^2.
 \end{equation*}
 From this estimate and by exploiting \eqref{eq:estRot}, it follows that
 \begin{equation}\label{eq:sigma22}
 A(\psi,\psi) \geq \frac{\omega^4}{2} \left(\frac{1}{2} - \frac{|\Omega|^2}{\omega^2} \right) \big\| |x|^2 \psi \big\|_{L^2}^2 \geq C_1(\omega,\Omega) \big\| |x|^2 \psi \big\|_{L^2}^2,
 \end{equation}
where we again have that $C_1(\omega,\Omega) >0$ because $|\Omega| < \frac{\omega}{\sqrt{2}}$. \newline
Furthermore, if in inequality \eqref{eq:quadest}, we choose $\eps_1 = \frac{1}{2}$, then we obtain that
\begin{equation*}
 A(\psi,\psi) \geq \frac{1}{2}\left( \frac{1}2 - \eps\right) \| \Delta \psi\|_{L^2}^2 + \frac{\omega^2}{2} \big\| |x| |\nabla \psi| \big\|_{L^2}^2 - \frac{1}{2 \eps} \| \Omega \cdot L \psi \|_{L^2}^2. 
 \end{equation*}
In this estimate we choose $\eps = \frac{|\Omega|^2}{\omega^2}$, so that
\begin{equation*}
 A(\psi,\psi) \geq \frac{1}{2}\left( \frac{1}2 - \frac{|\Omega|^2}{\omega^2}\right) \| \Delta \psi\|_{L^2}^2 + \frac{\omega^2}{2} \big\| |x| |\nabla \psi| \big\|_{L^2}^2 - \frac{\omega^2}{2 |\Omega|^2} \| \Omega \cdot L \psi \|_{L^2}^2. 
 \end{equation*}
 Exploiting \eqref{eq:estRot}, we obtain that 
 \begin{equation}\label{eq:sigma23}
 A(\psi,\psi) \geq \frac{1}{2}\left( \frac{1}2 - \frac{|\Omega|^2}{\omega^2}\right) \| \Delta \psi\|_{L^2}^2 \geq C_1(\omega, \Omega) \| \Delta \psi\|_{L^2}^2.
 \end{equation}
 Gathering together the three estimates \eqref{eq:sigma21}, \eqref{eq:sigma22}, \eqref{eq:sigma23}, we see that there exists a constant $C_2(\omega, \Omega) >0$ such that
 \begin{equation*}
 \| \Delta \psi\|_{L^2}^2 + \big\| |x|^2 \psi \big\|_{L^2}^2 + \big\| |x| |\nabla \psi| \big\|_{L^2}^2 \leq C_2 A(\psi,\psi).
 \end{equation*}
 Using \eqref{eq:normSig2} and \eqref{eq:quaEst0}, we conclude that
 \begin{equation*}
 \| \psi\|_{\Sigma^2}^2 \leq C_2\left(\big(H_\Omega \psi, H_\Omega \psi\big) + C\| \psi \|_{\Sigma}^2 \right).
 \end{equation*}
\end{proof}

As a direct consequence of this lemma, we find the following a priori estimates on a solution to \eqref{eq:simp_gp}:
\begin{prop}\label{prp:t*}
	 Let $\psi_0 \in \Sigma(\R^d)$ and $\psi \in C([0,T], \Sigma(\R^d))$ be the corresponding solution to \eqref{eq:simp_gp}. Then there exists $0 < T^*(\|\psi_0\|_{H^1},\| \lambda \|_{L^\infty}) \leq T$, such that
	\begin{equation}\label{eq:cont11}
		 \| \psi\|_{L^\infty([0,T^*],\Sigma)} \leq 2 \| \psi_0\|_{\Sigma},
	\end{equation} 
	and
	\begin{equation}\label{eq:cont12}
		 \inf_{[0,T^*]}\| \psi\|_{L^2} \geq \frac{1}{2} \| \psi_0\|_{L^2}.
	\end{equation} 
	Moreover, there exists a constant $0<\mathcal K(\| \psi_0\|_\Sigma )$ such that
	\begin{equation}\label{eq:cont13}
		 \left\|\mu[\psi]\right\|_{L^\infty[0,T^*]} \leq \mathcal K.
	\end{equation} 
	Finally, we also have 
	\begin{equation}\label{eq:cont14}
	 \partial_t \psi \in L^2\left([0,T^*], L^2(\R^d) \right), 
	\end{equation}
 and if $|\Omega| < \frac{\omega}{\sqrt{2}} $, then 
 \begin{equation}\label{eq:cont15}
	 \psi \in L^2([0,T^*],\Sigma^2(\R^d)).
	\end{equation}
\end{prop}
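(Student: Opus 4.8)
The plan is to establish the five bounds in the order stated, with the dissipation identity serving as the engine behind \eqref{eq:cont14} and \eqref{eq:cont15}. The first two estimates follow from a soft continuity argument. Since $\psi \in C([0,T],\Sigma(\R^d))$, the real-valued maps $t\mapsto \|\psi(t)\|_\Sigma$ and $t\mapsto\|\psi(t)\|_{L^2}$ are continuous and equal $\|\psi_0\|_\Sigma$ and $\|\psi_0\|_{L^2}$ at $t=0$; hence there is $T^*\in(0,T]$ on which $\|\psi(t)\|_\Sigma\leq 2\|\psi_0\|_\Sigma$ and $\|\psi(t)\|_{L^2}\geq\frac12\|\psi_0\|_{L^2}$ simultaneously, giving \eqref{eq:cont11} and \eqref{eq:cont12}. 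These can also be read off quantitatively from the Strichartz bounds used in Proposition \ref{prp:contr} together with the lower bound \eqref{eq:GPlinL3}, which is what makes $T^*$ depend only on $\|\psi_0\|_\Sigma$ (equivalently $\|\psi_0\|_{H^1}$) and $\|\lambda\|_{L^\infty}$.

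Next, \eqref{eq:cont13} is a direct algebraic consequence of the two preceding bounds. Recalling the definition \eqref{eq:mu_gp}, I would control the potential term by $V\lesssim|x|^2$, the rotation term by \eqref{eq:rotenCN}, and the nonlinear term by the embedding $\Sigma(\R^d)\hookrightarrow L^{2\sigma+2}(\R^d)$ from Proposition \ref{prp:comemb}; dividing by $\|\psi\|_{L^2}^2\geq\frac14\|\psi_0\|_{L^2}^2$ yields, exactly as in \eqref{eq:omegaLGP},
\begin{equation*}
\|\mu[\psi]\|_{L^\infty[0,T^*]}\lesssim \frac{1}{\|\psi_0\|_{L^2}^2}\left(\|\psi_0\|_\Sigma^2+\|\psi_0\|_\Sigma^{2\sigma+2}\right)=:\mathcal K.
\end{equation*}

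The heart of the matter is \eqref{eq:cont14}. I would pair \eqref{eq:simp_gp} with $\partial_t\psi$ in the real $L^2$ inner product: the term $(\ii\partial_t\psi,\partial_t\psi)$ vanishes, self-adjointness of $H_\Omega$ turns $(H_\Omega\psi,\partial_t\psi)$ into $\tfrac12\tfrac{d}{dt}(H_\Omega\psi,\psi)$, and the nonlinear and $\lambda$ terms become time derivatives of $\int|\psi|^{2\sigma+2}$ and $\|\psi\|_{L^2}^2$. The net result is the dissipation identity
\begin{equation*}
\gamma\int_0^t\|\partial_t\psi(s)\|_{L^2}^2\,ds+\tfrac12 E[\psi(t)]=\tfrac12 E[\psi_0]+\tfrac12\int_0^t\lambda(s)\tfrac{d}{ds}\|\psi(s)\|_{L^2}^2\,ds,
\end{equation*}
with $E$ the energy \eqref{eq:energy_gpe}. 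Using \eqref{eq:cont11} to bound $E[\psi(t)]$ above and below, and $\|\lambda\|_{L^\infty}$ together with an identity for $\tfrac{d}{ds}\|\psi\|_{L^2}^2$ analogous to \eqref{eq:GPlinL2} (whose right-hand side is bounded by \eqref{eq:cont13}) to control the last integral, I obtain a finite bound on $\int_0^{T^*}\|\partial_t\psi\|_{L^2}^2\,ds$. The subtle point, and the only place I expect genuine technical work, is rigor: the identity presupposes the very regularity $\partial_t\psi\in L^2$ we are proving. I would close this loop by exploiting the parabolic smoothing of the dissipative semigroup $U_\Omega$ (equivalently, by running the computation on regularized data and passing to the limit).

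Finally, \eqref{eq:cont15} combines the previous step with the lemma \eqref{eq:HSigma}, and this is the sole place where $|\Omega|<\frac{\omega}{\sqrt2}$ is used. Solving \eqref{eq:simp_gp} for the linear part gives $H_\Omega\psi=(\ii-\gamma)\partial_t\psi-g|\psi|^{2\sigma}\psi+\lambda\psi$, whence
\begin{equation*}
\|H_\Omega\psi\|_{L^2}\lesssim\|\partial_t\psi\|_{L^2}+\|\psi\|_{L^{2(2\sigma+1)}}^{2\sigma+1}+\|\lambda\|_{L^\infty}\|\psi\|_{L^2}.
\end{equation*}
In the admissible range of $\sigma$ the middle term is controlled by $\|\psi\|_\Sigma^{2\sigma+1}$ through $\Sigma(\R^d)\hookrightarrow L^{2(2\sigma+1)}(\R^d)$, uniformly in $t$ by \eqref{eq:cont11}; squaring, integrating over $[0,T^*]$ and invoking \eqref{eq:cont14} shows $H_\Omega\psi\in L^2([0,T^*],L^2(\R^d))$. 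The lemma \eqref{eq:HSigma} then upgrades this to $\psi\in L^2([0,T^*],\Sigma^2(\R^d))$, which completes the proof.
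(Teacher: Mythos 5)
Your treatment of \eqref{eq:cont11}--\eqref{eq:cont14} matches the paper's: continuity for the first two bounds, the embedding and the mass lower bound for \eqref{eq:cont13}, and the pairing with $\partial_t\psi$ combined with the mass identity $\gamma\frac{d}{dt}\|\psi\|_{L^2}^2=2(\lambda-\mu[\psi])\|\psi\|_{L^2}^2$ for the dissipation bound. (The paper, like you, only gestures at the regularization needed to justify these formal computations.)

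The gap is in your argument for \eqref{eq:cont15}. You bound $\|H_\Omega\psi\|_{L^2}$ by the triangle inequality, which forces you to control $\||\psi|^{2\sigma}\psi\|_{L^2}=\|\psi\|_{L^{4\sigma+2}}^{2\sigma+1}$ pointwise in time via the embedding $\Sigma(\R^d)\hookrightarrow L^{2(2\sigma+1)}(\R^d)$. For $d=3$ that embedding requires $4\sigma+2\leq 6$, i.e.\ $\sigma\leq 1$, whereas the proposition must cover the full range $0\leq\sigma<\frac{2}{(d-2)^+}=2$; for $1<\sigma<2$ your middle term is simply not controlled by $\|\psi\|_\Sigma^{2\sigma+1}$, and the step fails. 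The paper avoids this by never putting the nonlinearity in $L^2$: it squares the equation, so that
\begin{equation*}
\int_0^{T^*}(H_\Omega\psi,H_\Omega\psi)\,d\tau\leq(\gamma^2+1)\int_0^{T^*}\|\partial_\tau\psi\|_{L^2}^2\,d\tau-2\int_0^{T^*}\bigl(H_\Omega\psi,\,g|\psi|^{2\sigma}\psi-\lambda\psi\bigr)\,d\tau,
\end{equation*}
where the sign-definite term $g^2\int|\psi|^{4\sigma+2}$ has been discarded, and then estimates the cross term by integration by parts, $\bigl|(H_\Omega\psi,g|\psi|^{2\sigma}\psi)\bigr|\lesssim\int|\psi|^{2\sigma}(|\nabla\psi|^2+|x\psi|^2)\,dx$, followed by H\"older against the Strichartz norms $\|\nabla\psi\|_{L^q L^r}$ and $\|x\psi\|_{L^q L^r}$ (with $r=\frac{2d}{d-\sigma(d-2)}$) inherited from the fixed-point construction. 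This only involves first-order quantities and works for the whole admissible range. Your argument could also be repaired by interpolating $\|\psi\|_{L^{4\sigma+2}}$ between $H^1$ and $H^2$ and absorbing the resulting sublinear power of $\|\psi\|_{\Sigma^2}$ via Young's inequality, but as written the step does not go through for $1<\sigma<2$ in $d=3$.
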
 

\begin{proof}
 For any $\psi_0 \in \Sigma(\R^d)$, Proposition \ref{prp:contr} implies that there exists a time $T>0$ and a solution $\psi \in C([0,T), \Sigma(\R^d))$ to \eqref{eq:simp_gp}. In particular, by continuity, we can find a time $0 < T^* = T^*(\|\psi_0\|_{H^1},\| \lambda \|_{L^\infty}) \leq T$ such that \eqref{eq:cont11} and \eqref{eq:cont12} are true.\newline 
Then, by using \eqref{eq:rotenCN}, \eqref{eq:cont11},\eqref{eq:cont12} and the Gagliardo-Nirenberg inequality, we obtain that 
 \begin{equation*}
 \begin{aligned}
 \left\|\mu[\psi]\right\|_{L^\infty[0,T^*]} \lesssim \sup_{t \in [0,T^*]} \left( \| \psi(t)\|_\Sigma^2 + \| \psi(t)\|_{L^{2\sigma +2}}^{2\sigma +2} \right) \leq C
 \end{aligned}
 \end{equation*}
where $C(\|\psi_0\|_{\Sigma}) >0$. \newline
 To prove \eqref{eq:cont14} and \eqref{eq:cont15}, let us for the moment assume sufficient regularity and spatial decay of $\psi$ so that the following computations are justified. We take the scalar product of \eqref{eq:simp_gp} with $\psi$ and obtain that
 \begin{equation}\label{eq:massL}
 \gamma \frac{d}{dt} \| \psi \|_{L^2}^2 = 2 \left(\lambda - \mu[\psi]\right) \| \psi\|_{L^2}^2.
 \end{equation}
 Similarly, by taking the scalar product of equation \eqref{eq:simp_gp} with $\partial_t \psi$, integrating in time, using \eqref{eq:cont13}, \eqref{eq:massL} and Gagliardo-Nirenberg's inequality we obtain 
 \begin{equation}\label{eq:eneL}
 \begin{aligned}
 2 \gamma \int_0^{T^*} \| \partial_\tau \psi(\tau) \|_{L^2}^2 \, d\tau &= E[\psi_0] - E[\psi(T^*)] + \int_0^{T^*} \lambda(\tau) \frac{d}{d\tau} \| \psi(\tau)\|_{L^2}^2 \, d\tau \\
 & = E[\psi_0] - E[\psi(T^*)] + \frac{2}{\gamma}\int_0^{T^*} \lambda (\lambda - \mu[\psi]) \| \psi\|_{L^2}^2 \, d\tau \\
 &\lesssim C(\|\psi_0\|_{\Sigma}) 
 \\
 &+ \| \lambda\|_{L^\infty[0,T^*]} \big( \| \lambda \|_{L^\infty[0,T^*]} + \| \mu[\psi]\|_{L^\infty[0,T^*]}\big) \\
 &\times T^* \| \psi\|_{L^\infty([0,T^*],L^2)}^2 \\
 &\leq C(\|\psi_0\|_{\Sigma}, \| \lambda \|_{L^\infty} ) < \infty.
 \end{aligned}
 \end{equation}
 This implies that $\partial_t \psi \in L^2\left([0,T^*], L^2 (\R^d)\right)$. \newline
 Finally, we observe that \eqref{eq:simp_gp} implies formally that
 \begin{equation*}
 \begin{aligned}
 (\gamma^2 +1) \int_0^{T^*}\| \partial_\tau \psi\|_{L^2}^2 \, d\tau &= \int_0^{T^*} (H_\Omega \psi, H_\Omega \psi) \, d\tau \\ &+ \int_0^{T^*}\int g^2|\psi|^{4\sigma + 2} + \lambda^2 |\psi|^2 \, dx \, d\tau \\
 &+ 2\int_0^{T^*} (H_\Omega \psi, g|\psi|^{2\sigma}\psi - \lambda \psi) \, d\tau,
 \end{aligned}
 \end{equation*}
 which is equivalent to 
 \begin{equation}\label{eq:sigma2Der}
 \int_0^{T^*} (H_\Omega \psi, H_\Omega \psi) \, d\tau \leq (\gamma^2 +1) \int_0^{T^*}\| \partial_\tau \psi\|_{L^2}^2 \, d\tau - 2\int_0^{T^*} (H_\Omega \psi, g|\psi|^{2\sigma}\psi - \lambda \psi) \, d\tau.
 \end{equation}
The last term in \eqref{eq:sigma2Der} is estimated as
\begin{equation*}
 \begin{aligned}
 \left| 2\int_0^{T^*} \left(H_\Omega \psi, \lambda \psi\right) d\tau\right| \lesssim \| \lambda\|_{L^\infty[0,T^*]} \| \psi\|_{L^\infty([0,T^*],\Sigma)}^2.
 \end{aligned}
\end{equation*}
Now we suppose that $d = 3$, (the case $d= 2$ is similar). Then, by using H\"older's inequality, we estimate the third term in \eqref{eq:sigma2Der} as
\begin{equation*}
 \begin{aligned}
 \left| \left(H_\Omega \psi, g|\psi|^{2\sigma}\psi \right) \right| &\lesssim \int |\psi|^{2\sigma} ( |x\psi|^2 + |\nabla \psi|^2)\, dx \\
 &\lesssim \| \psi \|_{L^\frac{2d}{d-2}}^{2\sigma} ( \| \nabla \psi \|_{L^r}^2 + \| x \psi \|_{L^r}^2),
 \end{aligned}
	\end{equation*}
	where 
	\begin{equation*}
		r = \frac{2d}{d-\sigma(d-2)}.
	\end{equation*}
	By using again H\"older's inequality in time and Sobolev embedding, we obtain
	\begin{equation*}
 \begin{aligned}
 \int_0^{T^*} \| \psi \|_{L^\frac{2d}{d-2}}^{2\sigma}( \| \nabla \psi \|_{L^r}^2 + \| x \psi \|_{L^r}^2) \, d\tau & \leq \| \psi \|_{L^\infty([0,T^*],H^1)}^{2\sigma} (T^*)^{1/p} \\ 
 & \times \big( \| \nabla \psi\|_{L^q([0,T^*],L^r)}^2 + \| x \psi\|_{L^q([0,T^*],L^r)}^2\big)
 \end{aligned}
	\end{equation*}
 where 
 \begin{equation*}
 q = \frac{4}{\sigma(d-2)}
 \end{equation*}
 is such that $(q,r)$ satisfies condition \eqref{eq:qrAdm} and 
 \begin{equation*}
 0 < p = \frac{q}{q-2} < \infty.
 \end{equation*}
Thus, \eqref{eq:eneL}, \eqref{eq:sigma2Der}, \eqref{eq:cont11} and \eqref{eq:cont12} imply that 
 \begin{equation}\label{eq:stima2GP}
 \begin{aligned}
 \int_0^{T^*} \left(H_\Omega \psi,H_\Omega \psi \right) \, d\tau & \lesssim \| \partial_t \psi\|_{L^2([0,T],L^2)}^2 + \| \lambda\|_{L^\infty[0,T^*]} \| \psi\|_{L^\infty([0,T^*],\Sigma)}^2 \\
 &+ \| \psi \|_{L^\infty([0,T^*],H^1)}^{2\sigma} (T^*)^{1/p} \\
 & \times ( \| \nabla \psi\|_{L^q([0,T^*],L^r)}^2 + \| x \psi\|_{L^q([0,T^*],L^r)}^2) \leq C
 \end{aligned}
\end{equation}
where $C = C(\|\psi_0\|_{\Sigma}, \| \lambda\|_{L^\infty}) >0$.
On the other hand, if $|\Omega| < \frac{\omega}{\sqrt{2}} $, then by using \eqref{eq:HSigma} we have 
\begin{equation}\label{eq:stima1GP}
 \int_0^{T^*} \| \psi(\tau) \|_{\Sigma^2}^2 \, d\tau \leq C \left(\int_0^{T^*} \left( H_\Omega \psi,H_\Omega \psi \right) d\tau + T^* \| \psi\|_{L^\infty([0,T^*],\Sigma)}^2 \right).
\end{equation}
Thus, by combining \eqref{eq:stima1GP}, \eqref{eq:stima2GP} and \eqref{eq:cont11} we have that
\begin{equation*}
 \int_0^{T^*} \| \psi(\tau) \|_{\Sigma^2}^2 \, d\tau \leq C
\end{equation*}
where $C$ is a constant depending on $\|\psi_0\|_{\Sigma},\omega$ and $\Omega$. 
\end{proof}

Propositions \ref{prp:contr} and \ref{prp:t*} allow us to infer the local existence result for \eqref{eq:gpe}. The uniqueness will be given in Proposition \ref{prp:uniGP} below. 

\begin{prop}\label{prp:lwp}
	Let $0 < \sigma < \frac{2}{(d-2)^+}$ and $|\Omega| < \frac{\omega}{\sqrt{2}}$. For any $\psi_0 \in \Sigma(\R^d)$, there exists a maximal time of existence $ T_{max} > 0$ and a solution $\psi \in C([0,T_{max}), \Sigma(\R^d))$ to \eqref{eq:gpe}. Moreover, either $T_{max} = \infty$, or $T_{max} < \infty$ and
	\begin{equation*}
		\lim_{t \rightarrow T_{max}} \| \nabla \psi(t)\|_{\Sigma} = \infty.
	\end{equation*}
\end{prop}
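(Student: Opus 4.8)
The plan is to carry out the iterative scheme outlined above and then pass to the limit. Starting from $\psi^{(0)}\equiv0$, for each $k\geq1$ we let $\psi^{(k)}$ be the solution of the simplified problem \eqref{eq:simp_gp} with the prescribed coefficient $\lambda=\mu[\psi^{(k-1)}]$; since $\mu[\psi^{(k-1)}]$ is a fixed $L^\infty$ function of time, Proposition \ref{prp:contr} yields $\psi^{(k)}$ on some interval. The first step is to extract a \emph{common} existence time $T^*>0$ and bounds uniform in $k$, which I would obtain by induction. Assuming $\|\psi^{(k-1)}\|_{L^\infty([0,T^*],\Sigma)}\leq 2\|\psi_0\|_\Sigma$, estimate \eqref{eq:cont13} provides a bound $\|\mu[\psi^{(k-1)}]\|_{L^\infty[0,T^*]}\leq\mathcal K(\|\psi_0\|_\Sigma)$ that is independent of $k$. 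Because the existence time in Proposition \ref{prp:t*} depends only on $\|\psi_0\|_{H^1}$ and on $\|\lambda\|_{L^\infty}$, the same $T^*$ may be used at every step, and \eqref{eq:cont11}--\eqref{eq:cont12} propagate the bounds $\|\psi^{(k)}\|_{L^\infty([0,T^*],\Sigma)}\leq 2\|\psi_0\|_\Sigma$ and $\inf_{[0,T^*]}\|\psi^{(k)}\|_{L^2}\geq\frac12\|\psi_0\|_{L^2}$, closing the induction. Under the assumption $|\Omega|<\frac{\omega}{\sqrt{2}}$, estimates \eqref{eq:cont14}--\eqref{eq:cont15} additionally give the uniform bounds $\{\partial_t\psi^{(k)}\}\subset L^2([0,T^*],L^2)$ and $\{\psi^{(k)}\}\subset L^2([0,T^*],\Sigma^2)$.

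With these bounds in hand, I would apply an Aubin--Lions type compactness argument. Since $\{\psi^{(k)}\}$ is bounded in $L^2([0,T^*],\Sigma^2)$ and $\{\partial_t\psi^{(k)}\}$ is bounded in $L^2([0,T^*],L^2)$, and since $\Sigma^2(\R^d)\hookrightarrow\Sigma(\R^d)$ compactly (Proposition \ref{prp:sigemb}) while $\Sigma(\R^d)\hookrightarrow L^2(\R^d)$ continuously, we may extract a subsequence (not relabeled) converging to a limit $\psi$ \emph{strongly} in $L^2([0,T^*],\Sigma)$, weakly in $L^2([0,T^*],\Sigma^2)$, and weakly-$*$ in $L^\infty([0,T^*],\Sigma)$, with $\partial_t\psi^{(k)}\rightharpoonup\partial_t\psi$ in $L^2([0,T^*],L^2)$. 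I stress that it is precisely the $\Sigma^2$-bound---available only thanks to $|\Omega|<\frac{\omega}{\sqrt{2}}$---that upgrades the compactness from $L^2_tL^2$ to the stronger $L^2_t\Sigma$, which is indispensable below. The lower bound $\inf_{[0,T^*]}\|\psi\|_{L^2}\geq\frac12\|\psi_0\|_{L^2}$ passes to the limit, so that $\mu[\psi(t)]$ is well defined for a.e.\ $t\in[0,T^*]$.

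Passing to the limit in the equation is the main obstacle, and it splits into the two nonlinear terms. For the power nonlinearity, strong $L^2_t\Sigma$ convergence together with the compact embeddings of Proposition \ref{prp:comemb} (recall $2\sigma+2<\frac{2d}{(d-2)^+}$) and the uniform $L^\infty_t\Sigma$-bound yields, via the pointwise inequality $\big||u|^{2\sigma}u-|v|^{2\sigma}v\big|\leq C(|u|^{2\sigma}+|v|^{2\sigma})|u-v|$ and H\"older, the convergence $|\psi^{(k)}|^{2\sigma}\psi^{(k)}\to|\psi|^{2\sigma}\psi$. For the nonlocal coefficient, I would first establish that $u\mapsto\mu[u]$ is Lipschitz with respect to the $\Sigma$-norm on the region where the $\Sigma$-norm is bounded and the $L^2$-norm is bounded below---this is the content of \eqref{eq:omega1GP} and, crucially, holds for \emph{all} $\sigma\geq0$ since the offending term $\int|u|^{2\sigma+2}$ is locally Lipschitz on bounded subsets of $L^{2\sigma+2}(\R^d)$, into which $\Sigma(\R^d)$ continuously embeds. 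Integrating this pointwise Lipschitz bound in time shows $\mu[\psi^{(k-1)}]\to\mu[\psi]$ in $L^2([0,T^*])$; combined with $\psi^{(k)}\to\psi$ this gives $\mu[\psi^{(k-1)}]\psi^{(k)}\to\mu[\psi]\psi$. The fact that the coefficient is evaluated at the previous iterate $\psi^{(k-1)}$ causes no difficulty, as it shares the same limit $\psi$, while the uniform lower bound \eqref{eq:cont12} keeps the denominator in $\mu[\cdot]$ away from zero. Passing to the limit in the Duhamel (or weak) formulation of \eqref{eq:simp_gp} then shows that $\psi$ solves \eqref{eq:gpe}.

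Finally, once $\psi$ is a solution, its right-hand side $g|\psi|^{2\sigma}\psi-\mu[\psi]\psi$ lies in $L^\infty([0,T^*],\Sigma)$ by \eqref{eq:cont13} and the embedding $\Sigma\hookrightarrow L^{2\sigma+2}$, so the Duhamel representation together with the Strichartz and continuity properties of $U_\Omega(t)$ from Proposition \ref{prp:strich} upgrades the regularity to $\psi\in C([0,T^*],\Sigma)$. One then extends $\psi$ to a maximal interval $[0,T_{max})$ by reapplying the construction. For the blow-up alternative, I would argue by contradiction: since the local existence time depends only on $\|\psi_0\|_\Sigma$ through $\mathcal K$, a uniform bound on the $\Sigma$-norm of $\psi(t)$ as $t\to T_{max}<\infty$ would allow continuation past $T_{max}$, contradicting maximality; hence the $\Sigma$-norm of $\psi(t)$ must diverge as $t\to T_{max}$.
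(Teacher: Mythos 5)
Your proposal follows essentially the same route as the paper's proof: the iterative scheme with frozen chemical potential, uniform-in-$k$ bounds and a common existence time obtained by induction from Propositions \ref{prp:contr} and \ref{prp:t*}, Aubin--Lions type compactness from the $L^2_t\Sigma^2$ and $L^2_tL^2$ bounds (exactly where $|\Omega|<\frac{\omega}{\sqrt{2}}$ enters), passage to the limit in both the power nonlinearity and the nonlocal coefficient, and a continuation argument for the blow-up alternative. The only nitpick is the initialization: it should be $\psi^{(0)}=\psi_0$ (as in the paper's proof) rather than $\psi^{(0)}\equiv 0$, since otherwise $\mu[\psi^{(0)}]$ is undefined; with that trivial change your argument matches the paper's.
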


\begin{proof}
	 Let $\psi_0 \in \Sigma(\R^d), \ \psi_0 \centernot{\equiv} 0.$ Let $\psi^{(0)} = \psi_0$ and for any $k \in \N$, let $\psi^{(k+1)}$ be defined as the local solution to the following Cauchy problem:
	\begin{equation}\label{eq:gpk}
		\begin{cases}
			 (\ii - \gamma)\partial_t \psi^{(k+1)} 
 &= -\frac{1}{2}\Delta \psi^{(k+1)} +V\psi^{(k+1)} + g|\psi^{(k+1)} |^{2\sigma} \psi^{(k+1)} \\ 
 &- \Omega \cdot L\psi^{(k+1)} - \mu[\psi^{(k)}] \psi^{(k+1)} , \\
			 \psi^{(k+1)} (0) = \psi_0.&
		\end{cases}
	\end{equation}
 By using Proposition \ref{prp:contr}, there exists a local solution $\psi^{(k)} \in C\left([0,T_k], \Sigma(\R^d)\right)$, where $0 < T_k = T\left(\| \psi_0\|_\Sigma, \| \mu[\psi^{(k-1)}]\|_{L^\infty[0,T_k]}\right)$ is a time which depends on $k$. 
	We will now show by induction that there exists $\mathcal T >0$, not depending on $k$, so that for any $k \in \N$, $\psi^{(k)} \in C\left([0,\mathcal T], \Sigma(\R^d)\right)$, and we also have
	\begin{equation}\label{eq:cont111}
		 \sup_{k\in \N} \| \psi^{(k)}\|_{L^\infty([0,\mathcal T],\Sigma)} \leq 2 \| \psi_0\|_{\Sigma},
	\end{equation} 
	and
	\begin{equation}\label{eq:cont122}
		 \inf_{k \in \N} \inf_{[0,\mathcal T]}\| \psi^{(k)}\|_{L^2} \geq \frac{1}{2} \| \psi_0\|_{L^2}.
	\end{equation} 
 Clearly, properties \eqref{eq:cont111} and \eqref{eq:cont122} are true for $k = 1$. Assume that they are true until some $n\in \N$. Then Proposition \ref{prp:t*} and inequalities \eqref{eq:cont111},\eqref{eq:cont122} imply that there exists a constant $\mathcal K(\|\psi_0\|_\Sigma) >0$ , not depending on $n$, such that 
 \begin{equation*}
 \| \mu[\psi^{(n)}] \|_{L^\infty[0,\mathcal T]} \leq \mathcal K,
 \end{equation*}
	and a time $T^*(\| \psi_0\|_\Sigma, \mathcal K) >0$ , also not depending on $n$, such that the Cauchy problem \eqref{eq:gpk} admits a local solution $\psi^{(n+1)} \in C\left([0,T^*], \Sigma(\R^d)\right)$ satisfying estimates \eqref{eq:cont11}, \eqref{eq:cont12} and \eqref{eq:cont13}. \newline
	Thus we choose $\mathcal T = T^*(\| \psi_0\|_\Sigma, \mathcal K) $, which is not depending on $k$. This implies that the whole sequence $\{\psi^{(k)}\} \subset C([0,\mathcal{T}], \Sigma(\R^d))$ satisfies conditions \eqref{eq:cont111} and \eqref{eq:cont122}. In particular, the sequence is uniformly bounded in $L^\infty([0,\mathcal{T}], \Sigma(\R^d))$ and, from \eqref{eq:cont14}, we also have that for all $k\in \N$
	\begin{equation*}
	 \partial_t \psi^{(k)} \in L^2\left([0,\mathcal T], L^2(\R^d) \right), \quad \psi^{(k)} \in L^2([0,\mathcal T],\Sigma^2(\R^d)),
	\end{equation*}
	and the sequence $\{ \psi^{(k)}\}$ is uniformly bounded in these spaces. So there exists a subsequence, still denoted by $\{\psi^{(k)}\}$ and $\psi \in L^{\infty}([0,\mathcal T], \Sigma(\R^d)) \cap L^{2}([0,\mathcal T],\Sigma^2(\R^d))$ with $\partial_t \psi \in L^{2}([0,\mathcal T], L^2(\R^d))$, such that
	\begin{equation*}
		\psi^{(k)} \overset{\ast}{\rightharpoonup} \psi \ \mbox{ in } \ L^{\infty}([0,\mathcal T], \Sigma(\R^d)) \quad \mbox{ and } \quad 	\psi^{(k)} \rightharpoonup \psi \ \mbox{ in } \ L^{2}([0,\mathcal T],\Sigma^2(\R^d)).
	\end{equation*}
 Since the embedding $\Sigma^2(\R^d) \hookrightarrow \Sigma(\R^d)$ is compact (Proposition \ref{prp:sigemb}), the two convergences imply that 
 \begin{equation*}
 \psi^{(k)} \rightarrow \psi \quad \mbox{in} \quad L^2([0,\mathcal T], \Sigma(\R^d))
 \end{equation*}
 strongly, and also that $\psi \in C([0,\mathcal T], L^2(\R^d))$.
 Moreover
	\begin{equation*}
		\mu[\psi^{(k)}(t)] \rightarrow \mu[\psi(t)] 
	\end{equation*}
	strongly in $L^2(\R)$ and $ \mbox{weakly}^*$ in $L^\infty(\R)$. Then $\psi(t)$ satisfies \eqref{eq:gpe} on $[0,\mathcal T]$. 
\newline
Notice also that the time of existence $\mathcal T$ depends only on the initial condition, so it is straightforward to obtain the blow-up alternative. Indeed, for any $T>0$, if the $\Sigma$-norm of $\psi(T)$ is finite, we could use the same process to extend the lifespan of the solution to $T + \mathcal{T}(\| \psi(T)\|_\Sigma$). Consequently, either the maximal time of existence of a solution is infinite $T_{max} = \infty$, or $T_{max} < \infty$ and 
\begin{equation}\label{eq:bu_alt}
		\lim_{t \rightarrow T_{max}} \| \psi(t)\|_{\Sigma} = \infty.
\end{equation} 
\end{proof}

We will need the following properties to prove the uniqueness of solutions.

\begin{prop}\label{prp:unipr}
 Let $0 < \sigma < \frac{2}{(d-2)^+}$, $\psi_0 \in \Sigma(\R^d)$, $|\Omega| < \frac{\omega}{\sqrt{2}}$, and let $\psi \in C([0,T_{max}), \Sigma(\R^d))$ be the corresponding solution to \eqref{eq:gpe}. Then for any $0 < T < T_{max}$ and $t \in [0,T]$, we have
 \begin{equation}\label{eq:mas}
 \| \psi(t)\|_{L^2} = \| \psi_0\|_{L^2}
 \end{equation}
	and
	\begin{equation}\label{eq:eneq}
 E[\psi(t)] = E[\psi_0] - 2\gamma \int_0^t \| \partial_\tau \psi(\tau)\|^2_{L^2} \, d\tau.
	\end{equation}
	Moreover 
	\begin{equation}\label{eq:spaces}
 \partial_t \psi \in L^2\left([0,T], L^2(\R^d) \right), \quad \psi \in L^2([0,T],\Sigma^2(\R^d)).
	\end{equation}
\end{prop}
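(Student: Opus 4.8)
The plan is to first secure the regularity \eqref{eq:spaces}, and then to read off the two identities \eqref{eq:mas} and \eqref{eq:eneq} by testing the equation against $\psi$ and against $\partial_t\psi$, respectively. For \eqref{eq:spaces}, recall that the solution of Proposition \ref{prp:lwp} is obtained as a limit of the approximating sequence $\{\psi^{(k)}\}$, which by Proposition \ref{prp:t*} — and crucially through the bound \eqref{eq:HSigma} available under $|\Omega|<\frac{\omega}{\sqrt2}$ — is uniformly bounded with $\partial_t\psi^{(k)}\in L^2([0,\mathcal T],L^2)$ and $\psi^{(k)}\in L^2([0,\mathcal T],\Sigma^2)$. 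By weak compactness and lower semicontinuity of these norms, the bounds transfer to the limit, giving \eqref{eq:spaces} on the construction interval $[0,\mathcal T]$. For arbitrary $T<T_{max}$, the map $t\mapsto\|\psi(t)\|_\Sigma$ is continuous, hence bounded on $[0,T]$, so the local existence time furnished by Proposition \ref{prp:lwp} stays bounded below along $[0,T]$; since $\psi$ is itself the concatenation of such local pieces, covering $[0,T]$ by finitely many of them yields \eqref{eq:spaces} on all of $[0,T]$.

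Granting \eqref{eq:spaces}, we have $\psi\in C([0,T],\Sigma)\hookrightarrow C([0,T],L^2)$ with $\partial_t\psi\in L^2([0,T],L^2)$, so $t\mapsto\|\psi(t)\|_{L^2}^2$ is absolutely continuous and $\frac{d}{dt}\|\psi\|_{L^2}^2=2(\partial_t\psi,\psi)$ a.e. Writing the equation as $(\ii-\gamma)\partial_t\psi=R$ with $R:=H_\Omega\psi+g|\psi|^{2\sigma}\psi-\mu[\psi]\psi$, so that $\partial_t\psi=-\tfrac{\gamma+\ii}{1+\gamma^2}R$, the definition \eqref{eq:mu_gp} of $\mu[\psi]$ is exactly what forces
\[
\mathrm{Re}\!\int R\bar\psi\,dx=(H_\Omega\psi,\psi)+g\|\psi\|_{L^{2\sigma+2}}^{2\sigma+2}-\mu[\psi]\|\psi\|_{L^2}^2=0 .
\]
Hence $(\partial_t\psi,\psi)=\mathrm{Re}\big(-\tfrac{\gamma+\ii}{1+\gamma^2}\int R\bar\psi\,dx\big)=0$ a.e., which gives mass conservation \eqref{eq:mas}.

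For \eqref{eq:eneq} I would test the equation against $\partial_t\psi$. Since $\psi(t)\in\Sigma^2\hookrightarrow H^2\hookrightarrow L^\infty$ for a.e. $t$ (as $d\le3$) and $H_\Omega\psi\in L^2([0,T],L^2)$, the pairing $\big(H_\Omega\psi+g|\psi|^{2\sigma}\psi,\partial_t\psi\big)$ is well defined a.e.; its time-integrability comes for free, because by the equation it equals $\big((\ii-\gamma)\partial_t\psi+\mu[\psi]\psi,\partial_t\psi\big)$. The heart of the matter is the chain rule
\[
E[\psi(t)]-E[\psi_0]=2\int_0^t\big(H_\Omega\psi+g|\psi|^{2\sigma}\psi,\ \partial_\tau\psi\big)\,d\tau .
\]
Once this is available, I substitute $H_\Omega\psi+g|\psi|^{2\sigma}\psi=(\ii-\gamma)\partial_\tau\psi+\mu[\psi]\psi$: the $\mu[\psi]$-term contributes $2\mu[\psi](\psi,\partial_\tau\psi)=0$ by \eqref{eq:mas}, while $\big((\ii-\gamma)\partial_\tau\psi,\partial_\tau\psi\big)=-\gamma\|\partial_\tau\psi\|_{L^2}^2$ since $(\ii\partial_\tau\psi,\partial_\tau\psi)=0$. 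This produces exactly \eqref{eq:eneq}.

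The main obstacle is the rigorous proof of the displayed chain rule: $E$ contains the gradient term, so differentiating $t\mapsto E[\psi(t)]$ is not licensed directly by $\psi\in C([0,T],\Sigma)$ with merely $\partial_t\psi\in L^2([0,T],L^2)$. I would regularize in time, setting $\psi_\eta=\rho_\eta\ast_t\psi$, so that $\psi_\eta$ is smooth in $t$ with values in $\Sigma^2$, with $\partial_t\psi_\eta\to\partial_t\psi$ in $L^2([0,T],L^2)$ and $\psi_\eta\to\psi$ in $L^2([0,T],\Sigma^2)$ and locally uniformly in $t$ with values in $\Sigma$. Along the smoothed path the chain rule holds classically (here $E\in C^1(\Sigma)$ and $\psi_\eta(t)\in\Sigma^2$), and letting $\eta\to0$ — using one-sided mollifiers near the endpoints and the continuity of $E$ on $\Sigma$ — recovers the identity. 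I emphasize that passing to the limit in the analogue of \eqref{eq:eneL} for $\psi^{(k)}$ does not suffice on its own: weak convergence of $\partial_t\psi^{(k)}$ only yields lower semicontinuity of the dissipation term, hence an inequality, whereas \eqref{eq:eneq} is an equality. This is precisely why I argue directly on the limit.
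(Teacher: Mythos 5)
Your proposal is correct in substance and reaches the two identities by the same basic device as the paper — testing \eqref{eq:gpe} against $\psi$ and against $\partial_t\psi$ — but it organizes the argument in the opposite logical order and justifies the regularity \eqref{eq:spaces} by a genuinely different route. The paper first performs the two formal pairings (explicitly ``assuming sufficient regularity and spatial decay''), reads off \eqref{eq:mas} and \eqref{eq:eneq}, deduces $\partial_t\psi\in L^2([0,T],L^2)$ from \eqref{eq:eneq} together with the boundedness of $\|\psi(t)\|_\Sigma$ on $[0,T]$, and only then obtains $\psi\in L^2([0,T],\Sigma^2)$ by the pointwise identity $(1+\gamma^2)\|\partial_t\psi\|_{L^2}^2=\|H_\Omega\psi+g|\psi|^{2\sigma}\psi-\mu[\psi]\psi\|_{L^2}^2$ combined with Lemma \eqref{eq:HSigma}, exactly as in the last part of Proposition \ref{prp:t*}. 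You instead secure \eqref{eq:spaces} first by transporting the uniform bounds of the iterative scheme to the limit and covering $[0,T]$ with finitely many local intervals, and then use that regularity to make the two pairings rigorous (Lions-type chain rule via time mollification). Your approach buys genuine rigor where the paper is frank about being formal, and your observation that the dissipation term in the approximate energy identities only passes to the limit as an inequality is exactly the right reason to argue on the limit solution directly. Two caveats. First, in the mass-conservation step the displayed fact $\mathrm{Re}\int R\bar\psi\,dx=0$ does not by itself give $(\partial_t\psi,\psi)=\mathrm{Re}\bigl(-\tfrac{\gamma+\ii}{1+\gamma^2}\int R\bar\psi\,dx\bigr)=0$, since the prefactor is not real; you need the full quantity $\int R\bar\psi\,dx$ to vanish, which it does because $H_\Omega$ and $\Omega\cdot L$ are symmetric so that every term in $\int R\bar\psi\,dx$ is real — say this explicitly (the paper's analogous linear computation pairs separately with $u$ and with $\ii u$ for precisely this reason). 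Second, and more structurally: your derivation of \eqref{eq:spaces} applies only to the solution \emph{constructed} in Proposition \ref{prp:lwp}, whereas the proposition is invoked in the uniqueness proof (Proposition \ref{prp:uniGP}) for two a priori distinct solutions with the same datum; the paper's direct a priori estimate, formal as it is, applies to any $C([0,T],\Sigma)$ solution. To close this loop you would either need to run the paper's direct estimate as well, or restructure so that uniqueness is not needed to identify an arbitrary solution with the constructed one.
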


\begin{proof}
 The proof is very similar to that given for Proposition \ref{prp:t*}. Indeed, let us assume sufficient regularity and spatial decay of $\psi$ so that the following computations are satisfied. Then, by taking the scalar product of \eqref{eq:gpe} with $\psi$, we obtain the conservation of the $L^2$-norm \eqref{eq:mas}. Similarly, by taking the scalar product of \eqref{eq:gpe} with $\partial_t \psi$, we obtain \eqref{eq:eneq}. Gagliardo-Nirenberg's inequality implies that
 \begin{equation*}
 \begin{aligned}
 2\gamma \int_0^t \| \partial_\tau \psi(\tau)\|^2_{L^2} \, d\tau &= E[\psi_0] - E[\psi(t)] \\ &\leq E[\psi_0] + C\left(\|\psi(t)\|_{\Sigma}^{2} + \|\psi(t)\|_{\Sigma}^{2\sigma + 2} \right). 
 \end{aligned} 
 \end{equation*}
 Finally, by using \eqref{eq:scPrH} and by straightforwardly adapting the last part of the proof of Proposition \ref{prp:t*}, we obtain $\psi \in L^2([0, T],\Sigma^2(\R^d))$. 
\end{proof}

Now we prove the uniqueness of solutions to equation \eqref{eq:gpe}.

\begin{prop}\label{prp:uniGP}
 Let $0 < \sigma < \frac{2}{(d-2)^+}$, $|\Omega| < \frac{\omega}{\sqrt{2}}$ and $\psi_0 \in \Sigma(\R^d)$. Then there exists a unique solution $\psi \in C([0,T_{max}), \Sigma(\R^d))$ to \eqref{eq:gpe}
\end{prop}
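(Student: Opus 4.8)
The plan is to prove uniqueness by a Gronwall-type estimate on the difference of two solutions, crucially exploiting the dissipation together with the coercivity of $H_\Omega$. Suppose $\psi_1,\psi_2\in C([0,T_{max}),\Sigma(\R^d))$ both solve \eqref{eq:gpe} with the same datum $\psi_0$, fix $0<T<T_{max}$, and set $w=\psi_1-\psi_2$, so that $w(0)=0$. By Proposition \ref{prp:unipr} both solutions conserve mass, $\|\psi_1(t)\|_{L^2}=\|\psi_2(t)\|_{L^2}=\|\psi_0\|_{L^2}=:m$, and enjoy the regularity $w\in L^2([0,T],\Sigma^2(\R^d))$, $\partial_t w\in L^2([0,T],L^2(\R^d))$ of \eqref{eq:spaces}, which makes the computations below rigorous for a.e. $t$. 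Subtracting the equations gives $(\ii-\gamma)\partial_t w = H_\Omega w + gN - M$, where $N=|\psi_1|^{2\sigma}\psi_1-|\psi_2|^{2\sigma}\psi_2$ and $M=\mu[\psi_1]\psi_1-\mu[\psi_2]\psi_2$. Solving for $\partial_t w$ via $\tfrac{1}{\ii-\gamma}=\tfrac{-\gamma-\ii}{1+\gamma^2}$, pairing with $w$ in $L^2$ and taking real parts, I use that $H_\Omega$ is self-adjoint so that $(H_\Omega w,w)$ is real and equals the quadratic form $\int\tfrac12|\nabla w|^2+V|w|^2-\bar w(\Omega\cdot L)w\,dx$. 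Since $|\Omega|<\tfrac{\omega}{\sqrt2}<\omega$, this form is coercive, $(H_\Omega w,w)\geq c\|w\|_\Sigma^2$, and I obtain
\[
\frac{d}{dt}\|w\|_{L^2}^2 + \frac{2\gamma}{1+\gamma^2}(H_\Omega w,w) = \frac{2}{1+\gamma^2}\,\mathrm{Re}\int(-\gamma-\ii)\big(gN-M\big)\bar w\,dx,
\]
so that the left-hand side furnishes the favorable buffer $\tfrac{2\gamma c}{1+\gamma^2}\|w\|_\Sigma^2$.

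The core of the argument is to bound the right-hand side by $\varepsilon\|w\|_\Sigma^2+C\|w\|_{L^2}^2$. For the nonlinearity I would use $|N|\lesssim(|\psi_1|^{2\sigma}+|\psi_2|^{2\sigma})|w|$, H\"older with exponents $\tfrac{\sigma+1}{\sigma}$ and $\sigma+1$ to get $\int|N||w|\lesssim(\|\psi_1\|_{L^{2\sigma+2}}^{2\sigma}+\|\psi_2\|_{L^{2\sigma+2}}^{2\sigma})\|w\|_{L^{2\sigma+2}}^2$, and then Gagliardo--Nirenberg $\|w\|_{L^{2\sigma+2}}^2\lesssim\|w\|_{L^2}^{2(1-\theta)}\|w\|_\Sigma^{2\theta}$ with $\theta=\tfrac{d\sigma}{2(\sigma+1)}$. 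The strict inequality $\theta<1$ is exactly $\sigma<\tfrac{2}{(d-2)^+}$, which simultaneously guarantees $2\sigma+2<\tfrac{2d}{(d-2)^+}$, so that $\Sigma(\R^d)\hookrightarrow L^{2\sigma+2}(\R^d)$ and the factors $\|\psi_j\|_{L^{2\sigma+2}}$ are bounded by $\sup_{[0,T]}\|\psi_j\|_\Sigma<\infty$. Since $\theta<1$, Young's inequality converts $\|w\|_{L^2}^{2(1-\theta)}\|w\|_\Sigma^{2\theta}$ into $\varepsilon\|w\|_\Sigma^2+C_\varepsilon\|w\|_{L^2}^2$.

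For the nonlocal term I would split $M=(\mu[\psi_1]-\mu[\psi_2])\psi_1+\mu[\psi_2]w$. The second piece is harmless since $|\mu[\psi_2]|$ is bounded on $[0,T]$ by \eqref{eq:cont13}, contributing $C\|w\|_{L^2}^2$. The first piece is the main obstacle: because the chemical potential involves the gradient, $\mu[\psi_1]-\mu[\psi_2]$ cannot be controlled by $\|w\|_{L^2}$ alone. Estimating the differences of the Dirichlet, potential, rotational (via \eqref{eq:rotenCN}) and nonlinear parts termwise, as in the derivation of \eqref{eq:omega1GP}, yields the pointwise-in-time bound $|\mu[\psi_1(t)]-\mu[\psi_2(t)]|\lesssim C\|w(t)\|_\Sigma$, with $C$ depending on $m$ and $\sup_{[0,T]}\|\psi_j\|_\Sigma$. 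Hence $\int|\mu[\psi_1]-\mu[\psi_2]|\,|\psi_1|\,|w|\lesssim\|w\|_\Sigma\,m\,\|w\|_{L^2}$, which Young's inequality again turns into $\varepsilon\|w\|_\Sigma^2+C\|w\|_{L^2}^2$. It is precisely here that the dissipation is indispensable: the factor $\|w\|_\Sigma$ produced by the nonlocal term would be uncontrollable for a purely Hamiltonian flow, but it is absorbed into the coercive buffer $\tfrac{2\gamma c}{1+\gamma^2}\|w\|_\Sigma^2$, which exists only because $\gamma>0$ and $|\Omega|<\omega$.

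Choosing $\varepsilon$ small enough to absorb every $\|w\|_\Sigma^2$ contribution into the left-hand side, I arrive at $\tfrac{d}{dt}\|w(t)\|_{L^2}^2\leq C\|w(t)\|_{L^2}^2$ for a.e. $t\in[0,T]$, where $C$ depends only on $\gamma,\omega,\Omega$ and $\sup_{[0,T]}(\|\psi_1\|_\Sigma+\|\psi_2\|_\Sigma)$. Since $w(0)=0$, Gronwall's lemma forces $w\equiv0$ on $[0,T]$, and as $0<T<T_{max}$ is arbitrary this gives $\psi_1=\psi_2$ on $[0,T_{max})$, proving uniqueness.
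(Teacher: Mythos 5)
Your proof is correct, but it follows a genuinely different route from the paper. The paper proves uniqueness through the Duhamel formulation and the Strichartz estimates of Proposition \ref{prp:strich}: it measures the difference $\psi-\varphi$ only in $L^\infty_t L^2_x \cap L^q_t L^r_x$, and handles the nonlocal term via the bound
\begin{equation*}
|\mu[\psi]-\mu[\varphi]| \lesssim \left( \| (-\Delta + V) \psi \|_{L^2} + \| (-\Delta + V) \varphi \|_{L^2} \right) \| \psi - \varphi\|_{L^2},
\end{equation*}
which shifts the derivative loss onto the \emph{solutions} — this is exactly why the $L^2([0,T],\Sigma^2)$ bound of Proposition \ref{prp:unipr} (hence $|\Omega|<\omega/\sqrt{2}$) is invoked — and then closes a contraction-type inequality by taking $T$ small. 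You instead run a differential energy estimate on $\|w\|_{L^2}^2$: you put the derivative loss of the nonlocal term on the \emph{difference}, via $|\mu[\psi_1]-\mu[\psi_2]|\lesssim\|w\|_\Sigma$ (the analogue of \eqref{eq:omega1GP}), and absorb the resulting $\|w\|_\Sigma$ factors into the coercive dissipation $\frac{2\gamma}{1+\gamma^2}(H_\Omega w,w)\gtrsim\|w\|_\Sigma^2$, closing with Gronwall. Your approach buys several things: it avoids Strichartz estimates entirely at the uniqueness step, it makes transparent that uniqueness here rests on $\gamma>0$ together with the coercivity of $H_\Omega$ for $|\Omega|<\omega$ (the $\Sigma^2$-regularity enters only to justify the computation, not quantitatively in the estimates), and the Gronwall inequality gives $\psi_1\equiv\psi_2$ on any $[0,T]\subset[0,T_{max})$ in one stroke, with no small-time-plus-continuation argument. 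The paper's approach, by contrast, is insensitive to the sign structure of the equation (it never uses that the dissipative term has a favorable sign in an energy identity) but pays for this by needing the quantitative $L^2_t\Sigma^2$ bound on the solutions. Both arguments rely on Proposition \ref{prp:unipr}, and both are rigorous at the same level (the paper's regularity statements there are themselves obtained by formal computation), so your proposal is a valid and arguably more elementary alternative.
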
	

\begin{proof}
 It only remains to show the uniqueness of solutions. We suppose that there exist two different solutions $\psi, \varphi \in C([0,T),\Sigma(\R^d))$ to the equation \eqref{eq:gpe}, both starting from the same initial condition $\psi_0\in \Sigma(\R^d)$. Let $T>0$ be such that 
 \begin{equation*}
 \max \left(\| \psi \|_{L^\infty([0,T], \Sigma)} , \| \varphi \|_{L^\infty([0,T], \Sigma)}\right) \leq 2\| \psi_0\|_{\Sigma}.
 \end{equation*}
 As a consequence, Proposition \ref{prp:unipr}) implies that there exists a constant $C>0$ such that
 \begin{equation*}
 \| \psi \|_{L^2([0,T],\Sigma^2)} + \| \varphi \|_{L^2([0,T],\Sigma^2)} \leq C.
 \end{equation*}
 Let $(r,q)$ be defined as in \eqref{eq:rQThetaGp}. Then from Proposition \ref{prp:strich} we get
 \begin{equation}\label{eq:str1}
 	\begin{split}
	 	&\left\| \int_0^t U_\Omega(t - \tau) \left(|\psi(\tau)|^{2\sigma}\psi(\tau) - |\varphi(\tau)|^{2\sigma} \varphi(\tau)\right) d\tau \right\|_{L^q([0,T),L^r) \cap L^\infty([0,T],L^2) } \\& \lesssim \left(\|\psi\|_{L^\infty([0,T],L^r)}^{2\sigma +1} + \| \varphi \|_{L^\infty([0,T],L^r)}^{2\sigma +1} \right) \| \psi - \varphi\|_{L^{q'}([0,T),L^r)}.
 	\end{split}
 \end{equation}
 We can formally write that
 \begin{equation*}
	 |\mu[\psi] -\mu[\varphi] | \lesssim \left( \| (-\Delta + V(x)) \psi \|_{L^2} + \| (-\Delta + V(x)) \varphi \|_{L^2} \right) \| \psi - \varphi\|_{L^2}.
 \end{equation*}
 This implies that
 \begin{equation}\label{eq:str2}
 	\begin{split}
	 	&\bigg\| \int_0^t U_\Omega(t - \tau) \left(\mu[\psi(\tau)] \psi(\tau) -\mu[\varphi(\tau)]\varphi(\tau) \right) d\tau \bigg\|_{L^q([0,T),L^r) \cap L^\infty([0,T],L^2)} \\
	 	& \lesssim 
	 	\big\| |\mu[\psi] - \mu[\varphi]| \psi \big\|_{L^1([0,T),L^2)} + \| \mu[\varphi]\|_{L^2[0,T)} \| \psi -\varphi\|_{L^2([0,T),L^2)} \\
	 	& \lesssim \big(\| \psi \|_{L^2([0,T),\Sigma^2)} + \| \varphi \|_{L^2([0,T),\Sigma^2)}\big) \| \psi -\varphi\|_{L^2([0,T),L^2)}. 
 	\end{split}
 \end{equation}
 Using the integral formulation of equation \eqref{eq:gpe}
 \begin{equation*}
 \psi(t) = U_\Omega(t) \psi_0 -\frac{i+\gamma}{1+\gamma^2} \int_0^t U_\Omega(t - \tau) \left(g |\psi(\tau)|^{2\sigma}\psi(\tau) - \mu[\psi(\tau)] \psi(\tau)\right) d\tau,
 \end{equation*}
 we obtain
 \begin{equation*}
 \begin{aligned}
 \| \psi - \varphi\|_{L^\infty([0,T],L^2)} & + \| \psi - \varphi\|_{L^q([0,T),L^r)} \\ 
 & \lesssim \| \psi - \varphi\|_{L^2([0,T),L^2)} + \| \psi - \varphi\|_{L^{q'}([0,T),L^r)} \\
 & \lesssim T^\frac{1}{2}\| \psi - \varphi\|_{L^\infty([0,T],L^2)} + T^{\frac{ q - q' }{ q q' }} \| \psi - \varphi\|_{L^q([0,T),L^r)}.
 \end{aligned}
 \end{equation*}
 Since $\frac{ q - q' }{ q q' } >0$, by choosing $T >0$ sufficiently small, it follows that 
 \begin{equation*}
 \| \psi - \varphi\|_{L^\infty([0,T],L^2)} + \| \psi - \varphi\|_{L^q([0,T),L^r)} \leq 0,
 \end{equation*}
 that is $\psi = \varphi$ in the interval $[0,T]$.
\end{proof}

We notice that the proof of Theorem \ref{thm:lwp1} is given by combining Propositions \ref{prp:lwp} and \ref{prp:uniGP}.

\subsection{Global Well-posedness}

Under the conditions for local well-posedness stated in Theorems \ref{thm:lwp2} and \ref{thm:lwp1}, with the additional supposition that $\sigma < \frac{2}{d}$ if $g <0$, we obtain the global well-posedness of solutions by a classical argument which follows from the fact that the energy is decreasing, see \eqref{eq:en_decrease}. We shall first prove the following estimate.
\begin{lem}
 If $|\Omega| < \omega$, then there exists a constant $C(\Omega,\omega) >0$ such that for any $u \in \Sigma(\R^d)$,
 \begin{equation}\label{eq:sigmaEn}
 \| u \|_{\Sigma}^2 \leq C(H_\Omega u, u).
 \end{equation}
\end{lem}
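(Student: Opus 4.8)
The plan is to prove this as a coercivity (Gårding-type) estimate by expanding the quadratic form $(H_\Omega u, u)$ and absorbing the indefinite rotational contribution into the two positive quadratic terms coming from the kinetic energy and the confining potential. First I would write out, using the self-adjointness of $H_\Omega$ and the definition of the scalar product,
\begin{equation*}
(H_\Omega u, u) = \frac12\|\nabla u\|_{L^2}^2 + \int V|u|^2\,dx - \int \bar u\,(\Omega\cdot L)u\,dx,
\end{equation*}
the last term being real since $L$ is self-adjoint. Invoking \eqref{eq:V} together with the definition \eqref{eq:min_trap} of $\omega$, one has the pointwise bound $V(x)\geq \tfrac{\omega^2}{2}|x|^2$, so that $\int V|u|^2\,dx \geq \tfrac{\omega^2}{2}\|xu\|_{L^2}^2$. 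This reduces the anisotropic case to the isotropic one uniformly, and leaves the two positive pieces $\tfrac12\|\nabla u\|_{L^2}^2$ and $\tfrac{\omega^2}{2}\|xu\|_{L^2}^2$, whose sum is comparable to $\|u\|_\Sigma^2$.

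Next I would control the rotation term. Using the Cauchy–Schwarz bound \eqref{eq:rotenCN}, namely $\big|\int \bar u\,(\Omega\cdot L)u\,dx\big| \leq |\Omega|\,\|xu\|_{L^2}\|\nabla u\|_{L^2}$, followed by Young's inequality with a free parameter $\alpha>0$, I obtain
\begin{equation*}
\left|\int \bar u\,(\Omega\cdot L)u\,dx\right| \leq \frac{|\Omega|}{2\alpha}\|\nabla u\|_{L^2}^2 + \frac{|\Omega|\alpha}{2}\|xu\|_{L^2}^2,
\end{equation*}
and hence
\begin{equation*}
(H_\Omega u, u) \geq \frac12\Big(1 - \frac{|\Omega|}{\alpha}\Big)\|\nabla u\|_{L^2}^2 + \frac12\big(\omega^2 - |\Omega|\alpha\big)\|xu\|_{L^2}^2.
\end{equation*}

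The only remaining point, and the step that encodes the whole hypothesis, is to choose $\alpha$ making both coefficients strictly positive: this requires $|\Omega| < \alpha < \omega^2/|\Omega|$, and such an $\alpha$ exists precisely when $|\Omega|^2 < \omega^2$, i.e. $|\Omega| < \omega$. The symmetric choice $\alpha = \omega$ then gives
\begin{equation*}
(H_\Omega u, u) \geq \frac{\omega - |\Omega|}{2\omega}\|\nabla u\|_{L^2}^2 + \frac{\omega(\omega - |\Omega|)}{2}\|xu\|_{L^2}^2 \geq c\,\|u\|_\Sigma^2,
\end{equation*}
with $c = \min\!\big(\tfrac{\omega-|\Omega|}{2\omega},\, \tfrac{\omega(\omega-|\Omega|)}{2}\big) > 0$, so the assertion follows with $C = 1/c$. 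I do not expect a genuine analytic obstacle here: the content of the lemma is entirely the sharp use of Young's inequality, and the role of the assumption $|\Omega| < \omega$ is exactly to keep the discriminant condition $|\Omega|^2 < \omega^2$ satisfied, so that the kinetic and potential coefficients can be made positive simultaneously. The mild care needed is only in verifying the pointwise reduction $V \geq \tfrac{\omega^2}{2}|x|^2$ in the anisotropic case and in recording the explicit constant.
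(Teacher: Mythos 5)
Your proof is correct and follows essentially the same route as the paper: expand the quadratic form, bound the rotation term via Cauchy--Schwarz and Young with a free parameter, and choose the parameter so that both the kinetic and potential coefficients stay positive under $|\Omega|<\omega$. The only (cosmetic) differences are that you make a single symmetric choice $\alpha=\omega$ where the paper applies Young twice with two different parameters and takes a minimum, and that you handle the anisotropic case explicitly via $V\geq\tfrac{\omega^2}{2}|x|^2$ where the paper simply notes it is similar.
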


\begin{proof}
 By recalling the definition of $H_\Omega$ \eqref{eq:lin_opGP}, we observe that
 \begin{equation}\label{eq:stimaH}
 (H_\Omega u,u) = \frac12 \| \nabla u\|_{L^2}^2 + \frac{\omega^2}{2} \| xu\|_{L^2}^2 - \Omega (Lu,u),
 \end{equation}
 were we suppose that $\omega_j = \omega $ for any $j$ in \eqref{eq:V}, the other case being similar. We use Cauchy-Schwartz and Young's inequalities to obtain 
 \begin{equation*}
 |\Omega (Lu,u)| \leq |\Omega| \| \nabla u\|_{L^2} \| xu\|_{L^2} \leq \frac{\varepsilon}{2} \| \nabla u \|_{L^2}^2 + \frac{|\Omega|^2}{2\varepsilon} \| xu\|_{L^2}^2,
 \end{equation*}
 for any $\varepsilon >0$. In particular, by choosing $\varepsilon = 1$, we obtain from equation \eqref{eq:stimaH} 
 \begin{equation*}
 \frac{\omega^2 - |\Omega|^2}{2} \| x u \|_{L^2}^2 \leq (H_\Omega u,u),
 \end{equation*}
 while, by choosing $\varepsilon = \frac{|\Omega|^2}{\omega^2}$, we get 
 \begin{equation*}
 \frac{1}{2}\left( 1 - \frac{|\Omega|^2}{\omega^2}\right) \| \nabla u \|_{L^2}^2 \leq (H_\Omega u,u). 
 \end{equation*}
 Thus it follows that 
 \begin{equation*}
 \| u \|_{\Sigma}^2 \leq \frac{2}{C} (H_\Omega u,u). 
 \end{equation*}
 with
 \begin{equation*}
 C = \min \left( \frac{\omega^2 - |\Omega|^2}{2} ,\frac{1}{2}\left( 1 - \frac{|\Omega|^2}{\omega^2}\right)\right) 
 \end{equation*}
 and $C > 0$ for any $|\Omega| < \omega$. 
\end{proof}

With the lemma above and the fact the energy is decreasing \eqref{eq:en_decrease}, we can now prove we can now give a proof of Theorem \ref{thm:gwp} on the existence of global solutions. 

\begin{proof}[Proof of Theorem \ref{thm:gwp}:]
	First, we consider the case $g \geq 0$, and $\sigma < \frac{2}{(d-2)^+}$. Then estimate \eqref{eq:sigmaEn} and the definition of the energy \eqref{eq:energy_gpe} imply that
	\begin{equation*}
	 \| \psi(t)\|_\Sigma^2 \leq C(H_\Omega \psi, \psi) \leq C\big((H_\Omega \psi, \psi) + \frac{g}{\sigma + 1} \| \psi\|_{L^{2\sigma + 2}}^{2\sigma + 2}\big) = C E[\psi].
	\end{equation*}
 Since the energy is decreasing in time \eqref{eq:en_decrease}, we obtain the uniform bound 
\begin{equation}\label{eq:enbelow}
	 \| \psi(t)\|_\Sigma^2 \leq C E[\psi_0].
\end{equation}
In the same way, when $g < 0$, we can use \eqref{eq:sigmaEn}, \eqref{eq:en_decrease}, the conservation of the mass \eqref{eq:mas} and the Gagliardo-Nirenberg inequality to obtain 
 \begin{equation}\label{eq:unifg}
	 \| \psi(t)\|_\Sigma^2 \lesssim E[\psi_0] - \frac{g}{\sigma + 1} \| \psi\|_{L^{2\sigma + 2}}^{2\sigma + 2} \lesssim E[\psi_0] + \| \psi_0 \|_{L^2}^{2 + (2 -d)\sigma} \| \psi(t) \|_{\Sigma}^{d\sigma}.
	\end{equation}
 Since $d\sigma < 2$, we obtain again a uniform-in-time bound on $\| \psi(t)\|_\Sigma$. 
\end{proof}

\begin{remark}\label{rem:norms}
 Notice that \eqref{eq:sigmaEn} implies that for $u \in \Sigma(\R^d)$, $\sqrt{(H_\Omega u, u)}$ is an equivalent norm to $\| u\|_{\Sigma}$. Indeed we get
 \begin{equation}\label{eq:norms}
 \| u\|_{\Sigma}^2 \leq C (H_\Omega u, u) \leq K \| u\|_{\Sigma}^2,
 \end{equation}
 where $K = K(\omega) >0$. 
\end{remark}

\section{Asymptotic Behavior of the Linear Equation}\label{sec:linASGP}

In this section, we study the asymptotic behavior of solutions in the linear case $g= 0$. We will show that the asymptotic state depends on the initial condition. We introduce some notations to state our result. We denote the spectrum of the linear operator $H_\Omega$ by $\sigma(H_\Omega) = \{\lambda_{n}\}_{n\in \N}$ which is discrete, see Proposition \ref{prp:eigenv}. 
We order the eigenvalues in an increasing order
\begin{equation*}
	\lambda_{n} < \lambda_{m} \ \mbox{ if } \ n < m.
\end{equation*}
Let $\mathcal W_{n}$ be the eigenspaces associated with the eigenvalues $\lambda_{n}$, and $\phi_{n,k} \in \mathcal W_{n}$ the relative basis orthonormal basis of eigenfunctions, where $k=1,...,m_n$, and $m_n = \dim (\mathcal W_{n})$. Since these eigenfunctions form a complete orthonormal basis of $L^2$, we decompose the initial condition as 
\begin{equation*}
 \psi_0 = \sum_{n=1}^\infty\sum_{k=1}^{m_n} (\psi_0, \phi_{n,k}) \phi_{n,k}.
\end{equation*}
Let us denote by $\lambda_M$ the smallest eigenvalue in the decomposition of $\psi_0$ that is
\begin{equation}\label{eq:minEi}
 \lambda_{M} = \min \left\{ \lambda_{n} \in \sigma(H_\Omega) :\, \exists \phi_{n,k} \in \mathcal W_n, \, (\psi_0, \phi_{n,k}) \neq 0 \right\}. 
\end{equation}
Then we will prove that $\psi(t)$ asymptotically converges to the eigenspace $\mathcal W_M$ and $\mu[\psi(t)]$ to the eigenvalue $\lambda_{M}$. 

\begin{theorem}\label{thm:linAs}
	 Let $g = 0$, $\psi_0 \in \Sigma(\R^d)$ and let $\psi \in C([0,\infty),\Sigma(\R^d))$ be the corresponding solution to \eqref{eq:gpe}. Then we have
	\begin{equation*}
		\lim_{t \rightarrow \infty} \mu[\psi(t)] = \lambda_{M},
	\end{equation*}
	where $\lambda_{M}$ is defined in \eqref{eq:minEi}. Moreover, 
	\begin{equation*}
		\lim_{t \rightarrow \infty}\inf_{\varphi \in \mathcal W_{M}}\| \psi(t) - \varphi\|_{\Sigma} = 0. 
	\end{equation*}
\end{theorem}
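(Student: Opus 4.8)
The plan is to use that, for $g=0$, the chemical potential \eqref{eq:mu_gp} is simply the Rayleigh quotient $\mu[\psi]=(H_\Omega\psi,\psi)/\|\psi\|_{L^2}^2$, so that \eqref{eq:gpe} is a \emph{linear} equation in which the only nonlinear dependence enters through the scalar function $t\mapsto\mu[\psi(t)]$. Expanding $\psi(t)=\sum_{n,k}c_{n,k}(t)\phi_{n,k}$ in the orthonormal eigenbasis of $H_\Omega$ (which exists under the standing assumption $|\Omega|<\omega$, see Proposition \ref{prp:eigenv}) and projecting the equation onto $\phi_{n,k}$, I would first obtain the decoupled ODEs
\begin{equation*}
\partial_t c_{n,k}=-\frac{\ii+\gamma}{1+\gamma^2}\bigl(\lambda_n-\mu[\psi(t)]\bigr)c_{n,k},
\end{equation*}
whence, writing $\Lambda(t):=\int_0^t\mu[\psi(s)]\,ds$,
\begin{equation*}
|c_{n,k}(t)|^2=|c_{n,k}(0)|^2\exp\Bigl(-\tfrac{2\gamma}{1+\gamma^2}\lambda_n t+\tfrac{2\gamma}{1+\gamma^2}\Lambda(t)\Bigr).
\end{equation*}
The regularity $\psi\in L^2_{loc}(\Sigma^2)$ and $\partial_t\psi\in L^2_{loc}(L^2)$ furnished by Proposition \ref{prp:unipr} makes these projections and the termwise solution rigorous, and guarantees that $\Lambda$ is finite and continuous.

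The key point is that the unknown factor $e^{2\gamma\Lambda(t)/(1+\gamma^2)}$ cancels in every ratio of coefficients. Setting $a_n:=\|P_n\psi_0\|_{L^2}^2=\sum_k|c_{n,k}(0)|^2$, where $P_n$ is the orthogonal projection onto $\mathcal W_n$, the definition \eqref{eq:minEi} of $\lambda_M$ gives $a_n=0$ for $n<M$ and $a_M>0$. The Rayleigh quotient then reads
\begin{equation*}
\mu[\psi(t)]=\frac{\sum_{n\geq M}\lambda_n a_n\,e^{-\frac{2\gamma}{1+\gamma^2}(\lambda_n-\lambda_M)t}}{\sum_{n\geq M}a_n\,e^{-\frac{2\gamma}{1+\gamma^2}(\lambda_n-\lambda_M)t}}.
\end{equation*}
Since $\lambda_n-\lambda_M>0$ for $n>M$, every such term vanishes as $t\to\infty$, so $\mu[\psi(t)]\to\lambda_M$; the exchange of limit and sum is justified by dominated convergence, a summable majorant being $\lambda_n a_n$, finite because $\psi_0\in\Sigma$ and $(H_\Omega u,u)$ is equivalent to $\|u\|_\Sigma^2$ (Remark \ref{rem:norms}).

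For the convergence in $\Sigma$ I would take $\varphi=P_M\psi(t)\in\mathcal W_M$ and estimate the orthogonal remainder. Using \eqref{eq:sigmaEn} and the spectral formula,
\begin{equation*}
\inf_{\varphi\in\mathcal W_M}\|\psi(t)-\varphi\|_\Sigma^2\leq\|(I-P_M)\psi(t)\|_\Sigma^2\lesssim e^{\frac{2\gamma}{1+\gamma^2}(\Lambda(t)-\lambda_M t)}\sum_{n>M}\lambda_n a_n\,e^{-\frac{2\gamma}{1+\gamma^2}(\lambda_n-\lambda_M)t}.
\end{equation*}
Conservation of mass pins down the unknown prefactor: from $\|\psi(t)\|_{L^2}^2=\|\psi_0\|_{L^2}^2$ one gets $e^{\frac{2\gamma}{1+\gamma^2}(\Lambda(t)-\lambda_M t)}\sum_{n\geq M}a_n e^{-\frac{2\gamma}{1+\gamma^2}(\lambda_n-\lambda_M)t}=\|\psi_0\|_{L^2}^2$, so the prefactor converges to the finite limit $\|\psi_0\|_{L^2}^2/a_M$. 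The remaining sum tends to zero by the same dominated-convergence argument, and the right-hand side vanishes.

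The step I expect to be most delicate is the control of the prefactor $e^{2\gamma\Lambda(t)/(1+\gamma^2)}$: a priori $\Lambda(t)$ grows linearly and is not known explicitly, and it is precisely the conservation of the $L^2$-norm that fixes it asymptotically and turns the formal spectral computation into genuine convergence in both statements. A secondary technical point is the rigorous justification of the termwise ODE and of the dominated convergence for the infinite spectral sums measured in the $\Sigma$-topology, for which the a priori bound $\psi\in L^2_{loc}(\Sigma^2)$ and the summability of $\{\lambda_n a_n\}$ are the essential inputs.
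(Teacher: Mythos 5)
Your proposal is correct, but it takes a genuinely different route from the paper's. Both proofs start from the same spectral decomposition and the same mode ODEs $\dot b_{n,k}=-\tfrac{\ii+\gamma}{1+\gamma^2}\bigl(\lambda_n-\mu[\psi(t)]\bigr)b_{n,k}$; the difference is what is done with them. The paper argues softly: for unit mass and $g=0$ one has $\mu[\psi(t)]=E[\psi(t)]$, which is decreasing and bounded below, hence converges to some $\mu_\infty$; weak compactness in $\Sigma$ together with $\partial_t\psi(t_j)\to0$ in $L^2$ along a sequence of times shows that $\mu_\infty$ is an eigenvalue; a contradiction argument (if $\mu_\infty>\lambda_M$, then $|b_{M,k}(t)|\geq|b_{M,k}(0)|e^{\gamma\delta t}$ would violate mass conservation) identifies $\mu_\infty=\lambda_M$; and only then does the exponential decay of the modes $n>M$ give the $\Sigma$-convergence. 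Your key observation --- that the unknown factor $e^{\beta\Lambda(t)}$, $\beta:=\tfrac{2\gamma}{1+\gamma^2}$, cancels in the Rayleigh quotient, leaving the fully explicit formula $\mu[\psi(t)]=\sum_{n\geq M}\lambda_n a_n e^{-\beta(\lambda_n-\lambda_M)t}\big/\sum_{n\geq M}a_n e^{-\beta(\lambda_n-\lambda_M)t}$ --- lets you bypass both the compactness step and the contradiction argument: both assertions of the theorem then follow from dominated convergence (majorant $\lambda_na_n$, summable since $(H_\Omega\psi_0,\psi_0)\lesssim\|\psi_0\|_\Sigma^2$) plus mass conservation to pin the prefactor. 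What your approach buys: it is more elementary (no weak limits, no subsequence extraction, no use of energy monotonicity) and more quantitative, since $\mu[\psi(t)]$ is given explicitly in terms of the spectral data of $\psi_0$ and the convergence rate is visibly governed by the gap $\lambda_{M+1}-\lambda_M$, which exists because the spectrum is discrete. What the paper's approach buys: it is exactly the scheme that survives in the nonlinear case $g\neq0$ of the following section, where no explicit spectral solution is available. Two minor caveats, shared with the paper's own proof and therefore not gaps of yours: the projections should be taken with the complex $L^2$ pairing rather than the paper's real one, and the regularity $\partial_t\psi\in L^2_{loc}L^2$ is imported from Proposition \ref{prp:unipr}, whose statement formally assumes $|\Omega|<\omega/\sqrt{2}$.
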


\begin{proof}
 Without losing generality, we suppose that $\| \psi_0\|_{L^2} = 1$. From the conservation of the mass, this implies that for any $t>0$, $\| \psi(t)\|_{L^2} = 1$. Moreover, since $g= 0$, we also have that 
 \begin{equation}\label{eq:linear_mu}
 \mu[\psi(t)] = \left( H_\Omega \psi, \psi \right) = E[\psi(t)].
 \end{equation}
 Thus $\mu[\psi(t)]$ is decreasing in time and also bounded from below by the first eigenvalue of the operator $H_\Omega$.
 This implies that there exists $\mu_\infty \in \R$ such that 
 \begin{equation*}
 \lim_{t\rightarrow \infty} \mu[\psi(t)] = \mu_\infty.
 \end{equation*}
 
 First, we show that $\mu_\infty \in \sigma (H_\Omega)$. We observe that from \eqref{eq:sigmaEn}, \eqref{eq:linear_mu} and \eqref{eq:en_decrease}, there exists a constant $C>0$ such that for any $t \geq 0$ 
 \begin{equation*}
 \| \psi(t)\|_{\Sigma}^2 \leq C \left( H_\Omega \psi, \psi \right) \leq C E[\psi_0].
 \end{equation*}
In particular, $ \| \psi(t)\|_{\Sigma}$ is uniformly bounded in time. By Proposition \ref{prp:unipr}, this also implies that $\partial_t \psi \in L^2([0,\infty),L^2(\R^d))$. Thus there exists a $\psi_\infty \in \Sigma(\R^d)$ and a sequence of times $\{ t_j\} \subset \R^+$, $t_j \to \infty$ as $j \to \infty$ such that 
\begin{equation*}
 		\begin{cases}
 			\psi_j \rightharpoonup \psi_\infty, \quad \mbox{ in } \ \Sigma(\R^d), \\
 		 	\partial_t\psi_j \rightarrow 0, \quad \mbox{ in } \ L^2 , \\
 		 	(\ii - \gamma) \partial_t \psi_j + (\mu[\psi_j] - \mu_\infty) \psi_j = H_\Omega \psi_j - \mu_\infty \psi_j \rightarrow 0 \quad \mbox{ in } \ L^2 
 		\end{cases}
	 \end{equation*}
	as $j \rightarrow \infty$, where $\{\psi_j\} = \{\psi(t_j)\}$. Therefore, the profile $\psi_\infty$ satisfies 
 \begin{equation*}
 		 H_\Omega \psi_\infty = \mu_\infty \psi_\infty
	\end{equation*}
 in a weak sense. From the compact embedding $\Sigma(\R^d) \hookrightarrow L^2(\R^d)$, we have that $\| \psi_\infty\|_{L^2} = 1$. 
 Consequently $\mu_\infty \in \sigma(H_\Omega)$ and $\mu_\infty = \mu[\psi_\infty] = (H_\Omega \psi_\infty, \psi_\infty)$. In particular, we have 
 \begin{equation*}
 (H_\Omega \psi_j, \psi_j) \to (H_\Omega \psi_\infty, \psi_\infty).
 \end{equation*}
 Since for any $u\in \Sigma(\R^d)$, $(H_\Omega u, u)$ is an equivalent norm to $\| u\|_{\Sigma}$, see Remark \ref{rem:norms}, this implies the strong convergence $\psi_j \to \psi_\infty$ in $\Sigma(\R^d)$.
 \newline
 Next, we will show that $\mu_\infty = \lambda_M$ where $M$ is the smallest eigenvalue in the decomposition of $\psi_0$, see \eqref{eq:minEi}. We decompose $\psi(t)$ as 
 \begin{equation*}
 \psi(t,x) = \sum_{n=1}^\infty \sum_{k = 1}^{m_n} b_{n,k}(t) \phi_{n,k}(x)
 \end{equation*}
 where $b_{n,k}(t) = (\psi(t), \phi_{n,k})$. We plug this decomposition into \eqref{eq:gpe} and obtain that 
 \begin{equation}\label{eq:gpDeco}
 (\ii - \gamma) \sum_{n=1}^\infty \sum_{k = 1}^{m_n} \dot{b}_{n,k} \phi_{n,k} = \sum_{n=1}^\infty \sum_{k = 1}^{m_n} \lambda_n b_{n,k} \phi_{n,k} - \mu[\psi] \sum_{n=1}^\infty \sum_{k = 1}^{m_n} b_{n,k}(t) \phi_{n,k}(x).
 \end{equation}
 By taking the scalar product of \eqref{eq:gpDeco} with $\phi_{n,k}$ we obtain that
 \begin{equation*}
	 b_{n,k}(t) = b_{n,k}(0)\exp\left( \gamma \int_0^t \mu[\psi(\tau)] - \lambda_{n} \, d\tau\right).
 \end{equation*}
 Consequently, if $b_{n,k}(0) = 0$ then $b_{n,k}(t) = 0$ for every $t \geq 0$. Now let $\lambda_M$ be defined as in \eqref{eq:minEi}. Suppose that $\mu_\infty \neq \lambda_M$. Thus there exists $m >M$ such that $ \mu_\infty = \lambda_m > \lambda_M$. Since $\mu[\psi(t)]$ is decreasing in time, this implies that there exists $\delta>0$ so that $\mu[\psi(t)] - \lambda_{M} > \delta$ for any $t \geq 0$. Let $k \leq m_M$ be such that $b_{M,k}(0) = (\psi_0, \phi_{M,k}) \neq 0$. It follows that
 \begin{equation*}
	 |b_{M,k} (t)| = |b_{M,k}(0)| \exp \left( \gamma \int_0^t \mu[\psi(\tau)] - \lambda_{M}\, d\tau \right) > |b_{M,k}(0)| e^{\gamma \delta t},
 \end{equation*} 
 which is a contradiction with respect to the conservation of the $L^2$-norm \eqref{eq:mas}. This is enough to conclude that $\mu_\infty = \lambda_{M}$.
\newline
 Finally, we prove that the solution converges strongly in $\Sigma(\R^d)$ to the eigenspace $\mathcal W_M$ relative to the eigenvalue $\lambda_{M}$. 
 Let $\delta>0$ be such that for any $ n > M$,
 \begin{equation*}
 \lambda_{n} \geq \lambda_{M} + 2\delta. 
 \end{equation*}
 Since $\mu[\psi(t)] \to \lambda_{M}$ as $t \to \infty$, there exists $T = T(\delta) >0 $ such that for any $t \geq T$, 
 \begin{equation*}
	 \lambda_{M} \leq \mu[\psi(t)] \leq \lambda_{M} + \delta.
 \end{equation*}
 Therefore, it follows that for any $n>M$, and $t > T$
 \begin{equation*}
 \begin{aligned}
 |b_{n,k} (t)| &= |b_{n,k}(T)| \exp\left(\gamma \int_T^t \mu[\psi(\tau)] - \lambda_{n} \, d\tau \right) \\
 &< |b_{n,k}(T)| e^{-\gamma \delta (t - T)}.
 \end{aligned}
 \end{equation*}
 As a result, we obtain the convergence from
 \begin{equation*}
 \begin{aligned}
 	\left\| \psi(t,x) -\sum_{k=1}^{m_M}b_{M,k}(t) \phi_{M,k}(x)\right\|_{\Sigma} &= \left\| \sum_{n >M} \sum_{k=1}^{m_n} b_{n,k}(t) \phi_{n,k}(x) \right\|_{\Sigma} \\ 
 	& \leq \left\| \sum_{n >M} \sum_{k=1}^{m_n} b_{n,k}(T) \phi_{n,k}(x) \right\|_{\Sigma} e^{-\gamma \delta (t - T)}\\
 	&\lesssim e^{-\gamma \delta (t - T)} \| \psi(T,x)\|_{\Sigma}\rightarrow 0.
 \end{aligned}
\end{equation*}
as $t\rightarrow \infty$. In this way, we have shown that
\begin{equation*}
		\lim_{t \rightarrow \infty}\inf_{\varphi \in \mathcal W_{M}} \| \psi(t) - \varphi\|_{\Sigma} = 0. 
	\end{equation*}
\end{proof}
\begin{remark}
	 In general, the eigenvalues of $H_\Omega$ are not simple, and thus we cannot identify precisely the asymptotic state. In the case $\mu_\infty$ is simple, for example when $\mu_\infty$ is the smallest eigenvalue of $H_\Omega$, we obtain a strong convergence to a stationary state. 
\end{remark}

\section{Asymptotic Behavior in the Nonlinear Case}\label{sec:asNNGP}

In the previous section, we studied the asymptotic behavior when $g = 0$. In this case, different eigenspaces are invariant under the flow of \eqref{eq:gpe}. Thus the dissipation of the energy \eqref{eq:en_decrease} implies the convergence to the eigenspace of least energy in the decomposition of the initial datum. When $g \neq 0$, the power-type nonlinearity mixes the eigenspaces, and studying the asymptotic behavior becomes more complex.
\newline
In this section, we will first study the $\omega$-limit set for \eqref{eq:gpe} under the hypothesis of global well-posedness stated in Theorem \ref{thm:gwp}. For any $\psi_0 \in \Sigma(\R^d)$, the $\omega$-limit set is defined as follows.

\begin{defin}
 Let $\psi_0 \in \Sigma(\R^d)$ and let $\psi \in C[[0,\infty), \Sigma(\R^d))$ be the corresponding solution to \eqref{eq:gpe}. Then
 the $\omega$-limit set with respect to the metric induced by the $\Sigma$-norm is defined as
 \begin{equation*}
 \omega(\psi_0) = \left\{ u \in \Sigma: \, \exists \{t_n\} \subset \R^+, \, t_n \to \infty, \ \mbox{s.t.} \ \psi(t_n) \to u \ \mbox{in} \ \Sigma(\R^d) \right\}.
 \end{equation*}
\end{defin}

We shall also recall that a a stationary state $Q \in \Sigma(\R^d)$ is a solution to
\begin{equation}\label{eq:stst}
	0 = -\frac{1}{2}\Delta Q +VQ + g|Q|^{2\sigma} Q - \Omega \cdot LQ - \mu[Q] Q,
\end{equation}
and given a $\psi_0 \in \Sigma(\R^d)$, the set of all stationary states with the same mass as $\psi_0$ is denoted as
\begin{equation}
	\mathcal{S}(\psi_0) = \{u \in \Sigma: \, \| u\|_{L^2}=\| \psi_0\|_{L^2}, \, u \mbox{ solves } \eqref{eq:stst} \}. \label{eq:Ss}
\end{equation}
Proposition \ref{prp:gsGP} implies that there exists a function $Q_{gs} \in \mathcal S(\psi_0)$, referred to as the ground state, which minimizes the energy among all the elements of $\mathcal{S}$. We prove the following.

\begin{theorem}\label{thm:asyconv}
	Under the hypothesis of Theorem \ref{thm:gwp}, let $\psi_0 \in \Sigma(\R^d)$ and let  \  $\psi \in C([0,\infty),\Sigma(\R^d))$ be the corresponding solution to \eqref{eq:gpe}. Then there exists $Q \in \mathcal{S}(\psi_0)$ such that $Q \in \omega(\psi_0)$. 
	Moreover, we also have
	\begin{equation}\label{eq:enAsym}
	 	E[\psi(t)] \rightarrow E[Q] \ \mbox{ as } \ t \rightarrow \infty.
	\end{equation}
\end{theorem}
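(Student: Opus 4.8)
The plan is to combine the monotone energy dissipation with the uniform-in-time $\Sigma$ bound to extract a subsequence along which the time derivative vanishes, and then to pass to the limit in the stationary equation. By \eqref{eq:asy_en} the energy $E[\psi(t)]$ is non-increasing and bounded below, so it converges to some $E_\infty\geq\tau$. Feeding this into the dissipation identity \eqref{eq:eneq} gives $2\gamma\int_0^\infty\|\partial_\tau\psi(\tau)\|_{L^2}^2\,d\tau=E[\psi_0]-E_\infty<\infty$, so $\partial_t\psi\in L^2([0,\infty),L^2(\R^d))$ and there is a sequence $t_n\to\infty$ with $\|\partial_t\psi(t_n)\|_{L^2}\to0$. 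By the global-in-time $\Sigma$ bound established in the proof of Theorem \ref{thm:gwp} (see \eqref{eq:enbelow} and \eqref{eq:unifg}), $\{\psi(t_n)\}$ is bounded in $\Sigma(\R^d)$; using the compact embeddings of Proposition \ref{prp:comemb} and passing to a subsequence, I would obtain $Q\in\Sigma(\R^d)$ with $\psi(t_n)\rightharpoonup Q$ weakly in $\Sigma$ and $\psi(t_n)\to Q$ strongly in $L^2(\R^d)$ and in $L^{2\sigma+2}(\R^d)$ (the exponent $2\sigma+2$ lies in the compact range because $\sigma<\frac{2}{(d-2)^+}$). Mass conservation \eqref{eq:mas} then forces $\|Q\|_{L^2}=\|\psi_0\|_{L^2}$.

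Next I would identify $Q$ as a stationary state. Rewriting \eqref{eq:gpe} as $H_\Omega\psi+g|\psi|^{2\sigma}\psi-\mu[\psi]\psi=(\ii-\gamma)\partial_t\psi$ and evaluating at $t_n$, the right-hand side tends to $0$ in $L^2$. The scalars $\mu[\psi(t_n)]$ are bounded (by the $\Sigma$ bound together with $\|\psi(t_n)\|_{L^2}=\|\psi_0\|_{L^2}>0$), so after a further extraction $\mu[\psi(t_n)]\to\mu_\infty$. Testing against a fixed $\varphi\in C_0^\infty(\R^d)$, using $(H_\Omega\psi(t_n),\varphi)=(\psi(t_n),H_\Omega\varphi)$ and the strong $L^{2\sigma+2}$ convergence of the nonlinearity, I would pass to the limit to get $H_\Omega Q+g|Q|^{2\sigma}Q-\mu_\infty Q=0$ weakly. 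Pairing this identity with $Q$ and comparing with the definition \eqref{eq:mu_gp} of $\mu[Q]$ gives $\mu_\infty\|Q\|_{L^2}^2=\mu[Q]\|Q\|_{L^2}^2$, hence $\mu_\infty=\mu[Q]$ and $Q$ solves \eqref{eq:stst}; therefore $Q\in\mathcal S(\psi_0)$.

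The crucial step, and the main obstacle, is to upgrade weak $\Sigma$ convergence to strong $\Sigma$ convergence, which is exactly what membership in $\omega(\psi_0)$ requires. Here I would pair the rewritten equation at $t_n$ with $\psi(t_n)$ itself to obtain
\begin{equation*}
(H_\Omega\psi(t_n),\psi(t_n))=(\ii-\gamma)(\partial_t\psi(t_n),\psi(t_n))-g\|\psi(t_n)\|_{L^{2\sigma+2}}^{2\sigma+2}+\mu[\psi(t_n)]\|\psi(t_n)\|_{L^2}^2.
\end{equation*}
Each term on the right converges: the first vanishes since $\partial_t\psi(t_n)\to0$ in $L^2$ while $\psi(t_n)$ stays $L^2$-bounded, the second converges by strong $L^{2\sigma+2}$ convergence, and the last converges since $\mu[\psi(t_n)]\to\mu[Q]$ and the masses are constant. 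Pairing the stationary equation for $Q$ with $Q$ shows the limit equals $(H_\Omega Q,Q)$, so $(H_\Omega\psi(t_n),\psi(t_n))\to(H_\Omega Q,Q)$. Since $(H_\Omega\,\cdot\,,\cdot\,)$ is an inner product whose norm is equivalent to $\|\cdot\|_\Sigma$ (Remark \ref{rem:norms}), weak convergence together with convergence of the associated norms yields strong convergence $\psi(t_n)\to Q$ in $\Sigma(\R^d)$. Thus $Q\in\omega(\psi_0)\cap\mathcal S(\psi_0)$.

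Finally, the energy $E$ is continuous on $\Sigma(\R^d)$ (its quadratic part is controlled by $\|\cdot\|_\Sigma^2$ and the $L^{2\sigma+2}$ term by the continuous embedding of Proposition \ref{prp:comemb}), so $E[\psi(t_n)]\to E[Q]$; combined with $E[\psi(t)]\to E_\infty$ this gives $E_\infty=E[Q]$, that is \eqref{eq:enAsym}. I expect the delicate point throughout to be the strong $\Sigma$-convergence, since weak compactness alone is not enough to place $Q$ in the $\Sigma$-defined $\omega$-limit set; the identity above, which converts the vanishing of $\partial_t\psi(t_n)$ into convergence of the $H_\Omega$-energy, is what closes this gap.
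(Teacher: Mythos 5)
Your proposal is correct and follows essentially the same route as the paper: energy monotonicity plus the dissipation identity give $\partial_t\psi\in L^2_tL^2_x$, a subsequence $t_n$ with $\partial_t\psi(t_n)\to0$ is extracted, compact embedding of $\Sigma$ identifies the weak limit as a stationary state with the right mass, and strong $\Sigma$-convergence is recovered by showing $(H_\Omega\psi(t_n),\psi(t_n))\to(H_\Omega Q,Q)$ and invoking the norm equivalence of Remark \ref{rem:norms}. The only cosmetic difference is that you obtain the convergence of the quadratic form by pairing the equation with $\psi(t_n)$, whereas the paper reads it off directly from the identity $\mu[\psi]=(H_\Omega\psi,\psi)+g\|\psi\|_{L^{2\sigma+2}}^{2\sigma+2}$ under unit-mass normalization; these are the same computation.
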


\begin{proof}
 We suppose that $\| \psi_0 \|_{L^2} =1$ without losing generality. Then \eqref{eq:mas} implies that $\| \psi(t)\|_{L^2} = 1$ for every $t \geq 0$ and 
 \begin{equation*}
	 \mu[\psi(t)] = E[\psi(t)] + \frac{g\sigma}{\sigma + 1} \| \psi(t)\|_{L^{2\sigma + 2}}^{2\sigma +2}. 
	\end{equation*}
 Notice that the energy $E[\psi(t)]$ is a continuous and decreasing function of time, see \eqref{eq:en_decrease}. It is also bounded from below by the energy of the ground state $E[Q_{gs}]$, see Proposition \ref{prp:gsGP}. As a consequence, there exists $E_\infty \geq E[Q_{gs}]$ such that 
	\begin{equation*}
	 \lim_{t \to \infty}	E[\psi(t)] = E_{\infty}.
	\end{equation*}
 By using \eqref{eq:en_decrease}, this implies that 
 \begin{equation*}
		E_\infty - E[\psi_0] = \lim_{t \rightarrow \infty}	(E[\psi(t)] - E[\psi_0]) = -2\gamma \lim_{t \rightarrow \infty}\int_0^t\| \partial_\tau \psi(\tau)\|_{L^2}^2 \, d\tau
	\end{equation*}
	and consequently $\partial_t \psi \in L^2([0,\infty),L^2(\R^d))$.
	Moreover, by a straightforward adaption of Proposition \ref{prp:gsGP} we obtain that $\mu[\psi(t)]$ is a continuous function bounded from below by 
	\begin{equation*}
	 \mu_{min} = \inf\left\{ \mu[u]: \, u\in \Sigma(\R^d), \ \| u\|_{L^2} = 1 \right\}.
	\end{equation*}
 Notice that for $g \geq 0$, \eqref{eq:en_decrease} implies $\mu[\psi(t)]$ is also straightforwardly bounded from above by $(1 + \sigma) E[\psi_0]$. For $g < 0$, by the Gagliardo-Nirenberg inequality, $\mu[\psi(t)]$ is still bounded from above by 
 \begin{equation*}
 \|\mu[\psi]\|_{L^\infty[0,\infty)} \lesssim E[\psi_0] + \|\psi\|_{L^\infty([0,\infty), \Sigma)}^{2\sigma + 2}
 \end{equation*}
 where we use that $\|\psi\|_{L^\infty([0,\infty), \Sigma)}^{2\sigma + 2} \lesssim 1$, see \eqref{eq:unifg}. 
Thus there exists a sequence $\{t_n\} \subset \R^+$, $t_n \rightarrow \infty$ as $n\rightarrow \infty$ such that	
\begin{equation*}
		\begin{cases}
			\psi(t_n) \rightharpoonup \psi_\infty \ \mbox{ in } \ \Sigma(\R^d), \\
			\partial_t \psi(t_n) \rightarrow 0 \ \mbox{in} \ L^2,\\
			\mu[\psi(t_n)] \rightarrow \mu_\infty.
		\end{cases}
	\end{equation*}
	Then it follows that
	\begin{equation*}
	 \begin{aligned}
	 0 &= (\ii - \gamma)\partial_t \psi + \frac{1}{2}\Delta \psi -V\psi - g|\psi|^{2\sigma} \psi + \Omega \cdot L\psi + \mu[\psi] \psi \\
	 &\rightharpoonup \frac{1}{2}\Delta \psi_\infty -V \psi_\infty - g|\psi_\infty|^{2\sigma} \psi_\infty + \Omega \cdot L \psi_\infty+ \mu_\infty \psi_\infty ,
	 \end{aligned}
	\end{equation*}
	where the convergence is intended in a weak sense. In particular, the profile $\psi_\infty$ satisfies weakly the stationary equation
	\begin{equation*}
		0 =-\frac{1}{2} \Delta \psi_\infty +V\psi_\infty + g|\psi_\infty|^{2\sigma} \psi_\infty - \Omega \cdot L \psi_\infty- \mu_\infty\psi_\infty.
	\end{equation*}
	Since $\psi_\infty \in \Sigma(\R^d)$, from the equation above, we obtain $\mu_\infty = \mu[\psi_\infty]$. The compact embedding $\Sigma(\R^d) \hookrightarrow L^2(\R^d) \cap L^{2\sigma + 2}(\R^d)$ (see Proposition \ref{prp:comemb}) implies that $\| \psi_\infty\|_{L^2} = \|\psi_0\|_{L^2} = 1$, $\psi_\infty \in \mathcal S(\psi_0)$ and 
	\begin{equation*}
	 \| \psi(t_n)\|_{L^{2\sigma + 2}}^{2\sigma +2} \to \| \psi_\infty\|_{L^{2\sigma + 2}}^{2\sigma +2}.
	\end{equation*}
	Thus, from the convergence 
 \begin{equation*}
 \mu[\psi(t_k)] = (H_\Omega \psi(t_k),\psi(t_k)) + g \| \psi(t_k)\|_{L^{2\sigma +2}}^{2\sigma + 2} \to \mu[\psi_\infty] = (H_\Omega \psi_\infty,\psi_\infty) + g \| \psi_\infty\|_{L^{2\sigma +2}}^{2\sigma + 2}
 \end{equation*}
 we obtain that 
	\begin{equation*}
	 (H_\Omega \psi(t_n),\psi(t_n)) \to (H_\Omega \psi_\infty,\psi_\infty).
	\end{equation*}
	Consequently, by using \eqref{eq:sigmaEn}, we obtain the strong convergence in $\Sigma(\R^d)$ 
	\begin{equation*}
	 \| \psi(t_n) - \psi_\infty\|_\Sigma^2 \lesssim (H_\Omega \left(\psi(t_n) - \psi_\infty\right),\psi(t_n) - \psi_\infty) \to 0.
	\end{equation*}
From the monotonicity of the energy \eqref{eq:en_decrease}, it also follows that
	\begin{equation*}
	 E_\infty = \lim_{t \to \infty} E[\psi(t)] = \lim_{n \to \infty } E[\psi(t_n)] = E[\psi_\infty].
	\end{equation*}
\end{proof}	

As a consequence of Theorem \ref{thm:asyconv}, we obtain the following properties of the $\omega$-limit associated with the evolutionary system \eqref{eq:gpe}.

\begin{prop}\label{prp:omega}
 Under the hypothesis of Theorem \ref{thm:gwp}, let $\psi_0 \in \Sigma(\R^d)$. Then $\omega(\psi_0)$ is connected and compact with respect to the $L^p$-norm for any $p \in [2, 2d/(d - 2)^+)$. Moreover, $\omega(\psi_0) \subset \mathcal{S}(\psi_0)$ and for any $u\in \omega (\psi_0)$, $E[u] = E_\infty$. 
\end{prop}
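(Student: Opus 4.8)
The plan is to realize $\omega(\psi_0)$ as the classical $\omega$-limit set of a trajectory that is relatively compact in $L^p$, and then to upgrade the $L^p$ description to the strong $\Sigma$-convergence used in the definition. First I would collect the ingredients already available. Under the hypotheses of Theorem \ref{thm:gwp} the solution is global and, by \eqref{eq:enbelow}--\eqref{eq:unifg}, the trajectory $\{\psi(t):t\geq0\}$ is bounded in $\Sigma(\R^d)$; by the compact embedding of Proposition \ref{prp:comemb} it is therefore relatively compact in $L^p$ for every $p\in[2,2d/(d-2)^+)$. Moreover $E[\psi(t)]$ decreases to $E_\infty$ and $\partial_t\psi\in L^2([0,\infty),L^2)$, exactly as in the proof of Theorem \ref{thm:asyconv}. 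Since $t\mapsto\psi(t)$ is continuous into $L^p$, the sets $K_T=\overline{\{\psi(t):t\geq T\}}^{\,L^p}$ are nonempty, compact, connected, and decrease as $T$ grows.

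The core step is to show that every $L^p$-limit point of the trajectory in fact belongs to $\omega(\psi_0)$, so that $\omega(\psi_0)=\bigcap_{T\geq0}K_T$. Given $t_n\to\infty$ with $\psi(t_n)\to u$ in $L^p$, I use that $\int_{t_n}^{t_n+1}\|\partial_t\psi\|_{L^2}^2\,d\tau\to0$ to pick $t_n'\in[t_n,t_n+1]$ with $\|\partial_t\psi(t_n')\|_{L^2}\to0$; the estimate $\|\psi(t_n')-\psi(t_n)\|_{L^2}\leq(\int_{t_n}^{t_n+1}\|\partial_t\psi\|_{L^2}^2)^{1/2}\to0$, together with the $\Sigma$-bound and interpolation, gives $\psi(t_n')\to u$ in $L^p$ as well. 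The sequence $\{t_n'\}$ now satisfies exactly the hypotheses used in Theorem \ref{thm:asyconv}: extracting a subsequence along which $\mu[\psi(t_n')]$ converges and $\psi(t_n')\rightharpoonup u$ in $\Sigma$, one passes to the limit in \eqref{eq:gpe} to find that $u$ solves the stationary equation \eqref{eq:stst}, that $\|u\|_{L^2}=\|\psi_0\|_{L^2}$ (so $u\in\mathcal S(\psi_0)$), and, pairing the stationary equation with $u$, that the limiting multiplier equals $\mu[u]$. This forces $(H_\Omega\psi(t_n'),\psi(t_n'))\to(H_\Omega u,u)$; since $(H_\Omega\cdot,\cdot)$ is an inner product inducing a norm equivalent to $\|\cdot\|_\Sigma$ (Remark \ref{rem:norms}), weak convergence together with convergence of the norms yields $\psi(t_n')\to u$ strongly in $\Sigma$. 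Hence $u\in\omega(\psi_0)\subset\mathcal S(\psi_0)$, and continuity of $E$ on $\Sigma$ gives $E[u]=\lim_n E[\psi(t_n')]=E_\infty$. The reverse inclusion $\omega(\psi_0)\subset\bigcap_TK_T$ is immediate, since $\Sigma$-convergence implies $L^p$-convergence.

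Compactness and connectedness then follow from the representation as a nested intersection: $\omega(\psi_0)=\bigcap_{T\geq0}K_T$ is the intersection of a decreasing family of nonempty compact connected subsets of the metric (hence Hausdorff) space $L^p$, and such an intersection is nonempty, compact, and connected by a standard point-set fact. Nonemptiness is in any case already guaranteed by Theorem \ref{thm:asyconv}, which also supplies the inclusion $\omega(\psi_0)\subset\mathcal S(\psi_0)$ and the identity $E[u]=E_\infty$ on $\omega(\psi_0)$.

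I expect the main obstacle to be the second step, namely proving that a generic $L^p$-limit point is a strong $\Sigma$-limit point of a stationary state. The tension is that membership in $\omega(\psi_0)$ demands strong $\Sigma$-convergence, whereas precompactness of the trajectory is only available in $L^p$; closing this gap requires the time-shift trick to produce a nearby sequence along which $\partial_t\psi\to0$ in $L^2$, the pairing identity to identify the limiting chemical potential with $\mu[u]$, and the Hilbert-space upgrade from weak-plus-norm convergence to strong convergence. By contrast, the connectedness of a nested intersection of compact connected sets is routine, and I would simply invoke it.
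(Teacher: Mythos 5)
Your proof is correct, but it takes a genuinely different route from the paper's for the key inclusion $\omega(\psi_0)\subset\mathcal S(\psi_0)$. The paper argues by contradiction using the semigroup property: it restarts the flow from a hypothetical non-stationary $v_0\in\omega(\psi_0)$, applies Theorem \ref{thm:asyconv} to that new trajectory to produce $W\in\mathcal S(\psi_0)\cap\omega(\psi_0)$ with $E[W]=E[v_0]=E_\infty$, and then reads off from the energy identity \eqref{eq:en_decrease} that the dissipation along the restarted solution vanishes, so $v_0$ was stationary after all. That argument is shorter but tacitly uses continuous dependence on initial data in $\Sigma$ (to pass from $\psi(t_n)\to v_0$ to $\psi(t_n+\tau_n)\to W$), which the well-posedness section only provides in a weaker topology when $\sigma<\tfrac12$. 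Your time-shift argument --- choosing $t_n'\in[t_n,t_n+1]$ with $\|\partial_t\psi(t_n')\|_{L^2}\to0$ and rerunning the compactness scheme of Theorem \ref{thm:asyconv} along $\{t_n'\}$ --- shows directly that \emph{every} $L^p$-limit point of the trajectory is a strong $\Sigma$-limit of the solution and solves the stationary equation, so no restarted flow is needed. It also repairs a point the paper glosses over: the representation $\omega(\psi_0)=\bigcap_{s>0}\overline{\{\psi(t):t\geq s\}}$ (closures in $L^p$), from which compactness and connectedness are deduced, requires precisely your upgrade from $L^p$-limit points to $\Sigma$-limit points, since $\omega(\psi_0)$ is defined via $\Sigma$-convergence. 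Both proofs deliver the statement; yours is more self-contained, the paper's is more economical granted the flow property.
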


\begin{proof}
 Notice that for any $s>0$, the set 
 \begin{equation*}
 \{ \psi(t): \, t\geq s\}
 \end{equation*}
 is connected and relatively compact in $L^p(\R^d)$ for any $p\in \left[2, \frac{2d}{(d-2)^+}\right)$ (see Proposition \ref{prp:comemb}). As a consequence, 
 \begin{equation*}
 \omega(\psi_0) = \bigcap_{s>0} \overline{\{ \psi(t): \, t\geq s\}},
 \end{equation*}
 is connected and compact in $L^p(\R^d)$. Moreover, Theorem \ref{thm:asyconv} implies that there exists $Q\in \mathcal S(\psi_0)$ such that $Q \in \omega(\psi_0)$ and also that for any $u \in \omega(\psi_0)$, $E[u] = E[Q] = E_\infty$, see \eqref{eq:enAsym}. 
 \newline
 We now show that $\omega(\psi_0) \subset \mathcal{S}(\psi_0)$. Suppose by contradiction that there exists $v_0 \in \omega (\psi_0)$ such that $v_0 \notin \mathcal S(\psi_0)$. Let $\{t_n\} \subset \R^+$, $t_n \to \infty$ as $n\to \infty$ be such that $\psi(t_n) \to v_0$ in $\Sigma(\R^d)$. Let $v \in C([0,\infty),\Sigma(\R^d))$ be the solution to \eqref{eq:gpe} stemming from $v_0$.
 Then Theorem \ref{thm:asyconv} implies that there exists $\{\tau_n\} \subset \R^+$ be such that $v(\tau_n) \to W$ in $\Sigma(\R^d)$, where $W \in \mathcal S(\psi_0)$. Then it is straightforward to see that $\psi(t_n + \tau_n) \to W$ in $\Sigma(\R^d)$, thus $W \in \omega(\psi_0)$ and $E[W] = E[v_0] = E_\infty$. Thus \eqref{eq:en_decrease} implies that
 \begin{equation*}
 \int_0^\infty \| \partial_t v(\tau)\|_{L^2}^2 \, d\tau = E[W] - E[v_0] = 0,
 \end{equation*}
 that is $ \partial_t v = 0$ almost everywhere in $L^2$ and $v$ solves the stationary equation \eqref{eq:stst} in a weak sense for almost every $t >0$. Since $v \in \Sigma(\R^d)$, we obtain that $v \in \mathcal S(\psi_0)$, which is a contradiction.
\end{proof}

In general, neither Theorem \ref{thm:asyconv} nor Proposition \ref{prp:omega} imply the uniqueness of the asymptotic limit, nor that the solution tends to the limit for all $t>0$. The uniqueness could follow from the fact that $\omega(\psi_0)$ is connected. This would require showing that the set of stationary states with fixed $L^2$-norm and fixed energy is not connected (excluding the phase shift invariance), which is a result the authors are not aware of. 
\newline
On the other hand, there exist initial conditions that stem solutions where both uniqueness and convergence for all times can be proven. Indeed if the energy of the initial condition is small enough, for instance, smaller than that of the first excited state in $\mathcal{S}(\psi_0)$, then the solution must converge to the ground state at least on a sequence of times. We will now suppose that Conjecture \ref{concon} is true. We recall that this conjecture is partially confirmed by numerical approaches and formal expansion in \cite{BaRu18} and reference therein, although a general theoretical proof seems to be missing. 
\newline
Under the hypothesis that the conjecture is true, we can prove Theorem \ref{thm:conv}, that is, we can show that a solution with small enough initial energy converges strongly to a ground state.
\begin{proof}[Proof of Theorem \ref{thm:conv}:]
 Theorem \ref{thm:asyconv} and equation \eqref{eq:eneq} imply that there exists $\phi \in [0,2\pi)$ such that $e^{\ii\phi} Q_{gs} \in \omega(\psi_0)$, where $\|Q_{gs}\|_{L^2} = \| \psi_0\|_{L^2} $, and $Q_{gs}$ is given by Proposition \ref{prp:gsGP}. Without losing generality, we suppose $\phi = 0$. 
 \newline
 Suppose by contradiction that 
 \begin{equation*}
 \psi(t) \nrightarrow Q_{gs}, \quad \mbox{in} \quad \Sigma(\R^d)
 \end{equation*}
 as $t \to \infty$. Then there exists $\{t_n\} \subset \R^+$, $t_n \to \infty$ as $n\to \infty$ and $\varepsilon >0$ such that 
 \begin{equation*}
 \| \psi(t_n) - Q_{gs}\|_{\Sigma} \geq \varepsilon.
 \end{equation*}
 From the uniform boundness of $\| \psi(t_n)\|_\Sigma$, there exists a subsequence, still denoted by ${t_n}$ and $u \in \Sigma(\R^d)$ such that
 \begin{equation*}
 \psi(t_n) \rightharpoonup u \ \mbox{in} \ \Sigma.
 \end{equation*}
 From the compact embedding $\Sigma(\R^d) \hookrightarrow L^2(\R^d) \cap L^{2\sigma + 2}(\R^d)$, we obtain that
 \begin{equation*}
 \| u\|_{L^2} = \|\psi_0\|_{L^2}, \quad \| \psi(t_n)\|_{L^{2\sigma + 2}}^{2\sigma + 2} \to \| u \|_{L^{2\sigma + 2}}^{2\sigma + 2}.
 \end{equation*}
	As a consequence, from \eqref{eq:norms} and the weak lower semi-continuity of the norm we obtain
	\begin{equation*}
		E[u] \leq \liminf_n E[\psi(t_n)] = E[Q_{gs}].
	\end{equation*}
	Proposition \ref{prp:gsGP} implies that $E[u] = E[Q_{gs}]$. As a result, it follows from \eqref{eq:sigmaEn} that
	\begin{equation*}
	 \| \psi(t_n)\|_\Sigma \to \| Q_{gs}\|_\Sigma
	\end{equation*}
	that is $\psi(t_n) \rightarrow Q_{gs}$ in $\Sigma(\R^d)$. This is a contradiction. 
\end{proof}

\bibliographystyle{plain}
\bibliography{rotationalGP_bib}           

\end{document}